\newtheorem{theorem}{Theorem}
\newtheorem{lemma}{Lemma}
\newtheorem{corollary}{Corollary}
\newtheorem{observation}{Observation}
\newtheorem{proposition}{Proposition}
\newtheorem*{claim}{Claim}
\newcommand{\R}{\ensuremath{\mathbb R}}
\newcommand\blfootnote[1]{%
  \begingroup
  \renewcommand\thefootnote{}\footnote{#1}%
  \addtocounter{footnote}{-1}%
  \endgroup
}
\newcommand{\Xkl}[1][k,\ell]{\ensuremath{X_{#1}}}
\newcommand{\Xk}[1][k]{\ensuremath{X_{#1}}}
\newcommand{\ASum}[1][\ell]{\ensuremath{A_{#1}}}
\newcommand{\mrk}[2][r]{\ensuremath{m_{#1}(#2)}}
\newcommand{\fkl}[2]{\ensuremath{f(#1,#2)}}
\newcommand{\FSum}{\ensuremath{F}}
\newcommand{\FrSum}[1][r]{\ensuremath{F_{#1}}}
\DeclareMathOperator{\Fib}{Fib}
\title{%
	On weighted sums of numbers of convex polygons in point sets}
\author{Clemens Huemer\thanks{Departament de Matem{\`a}tiques,
		Universitat Polit{\`e}cnica de Catalunya, 
		{\tt clemens.huemer@upc.edu}}
	\and
	  	Deborah Oliveros\thanks{Instituto de Matem\'aticas, 
		Universidad Nacional Aut\'onoma de M\'exico,
		{\tt dolivero@matem.unam.mx}}		
	\and
	Pablo P\'erez-Lantero\thanks{Departamento de Matem\'atica y Ciencia de la Computaci\'on, USACH, Chile. {\tt pablo.perez.l@usach.cl}.}
		\and
		Ferran Torra\thanks{Departament de Matem{\`a}tiques,
		Universitat Polit{\`e}cnica de Catalunya, 
		{\tt ferran.torra@estudiant.upc.edu }}
	\and
		Birgit Vogtenhuber\thanks{Institute of Software Technology,
        	Graz University of Technology, Austria,
        	{\tt bvogt@ist.tugraz.at}}
}
\date{\today} 
\begin{document}
\maketitle
\begin{abstract}
Let $S$ be a set of $n$ points in general position in the plane, and let $X_{k,\ell}(S)$ be the number of convex $k$-gons with vertices in $S$ that have exactly~$\ell$ points of $S$ in their interior. We prove several equalities for the numbers $X_{k,\ell}(S)$. This problem is related to the Erd\H{o}s-Szekeres theorem. Some of the obtained equations also extend known equations for the numbers of empty convex polygons to polygons with interior points. Analogous results for higher dimension are shown as well.
\end{abstract}

\section{Introduction}\label{intro}
\blfootnote{
		\begin{minipage}[l]{0.3\textwidth} 
		\vspace{-0.1cm}
		\hspace{-0.25cm}
		\includegraphics[trim=10cm 6cm 10cm 5cm,clip,scale=0.11]{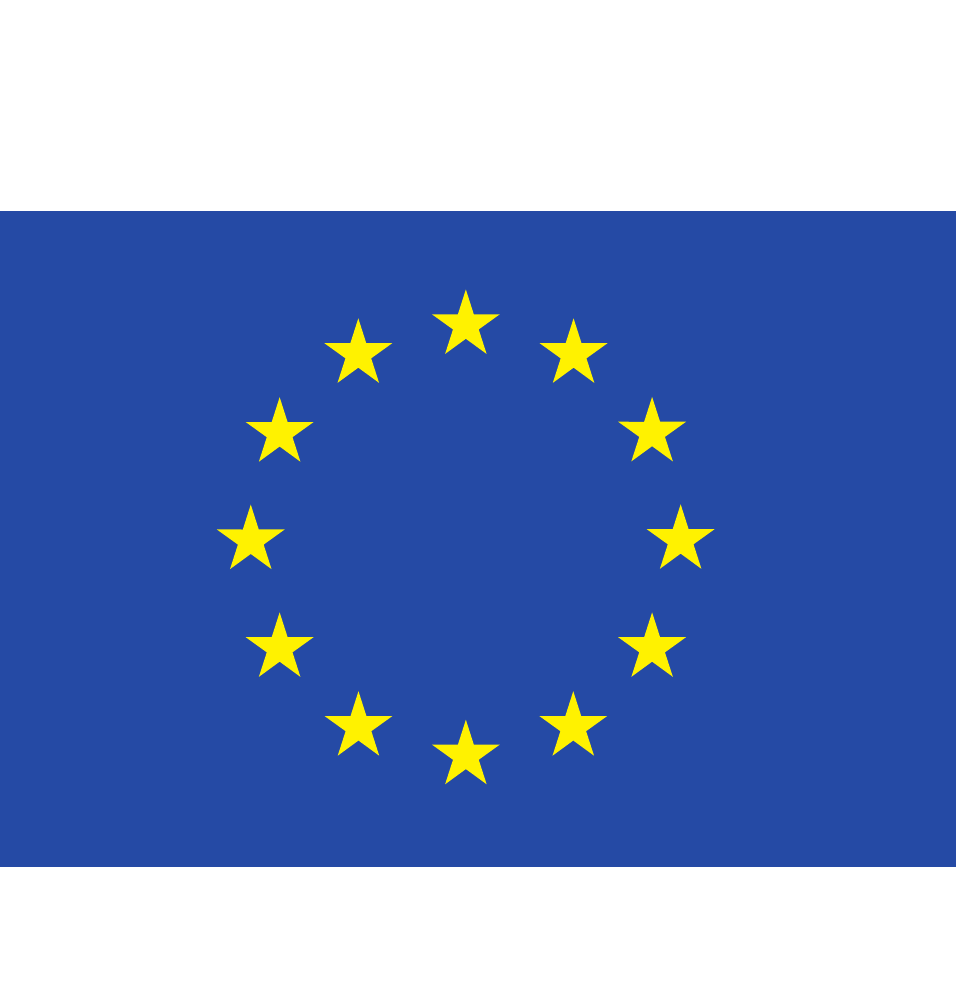} 
	\end{minipage}  
	\hspace{-3.6cm} 
	\begin{minipage}[l][1cm]{0.87\textwidth}
		\vspace{0.21cm}
		This project has been supported by the European Union's Horizon 2020 research and innovation programme under the Marie Sk\l{}odowska-Curie grant agreement No 734922.
 	\end{minipage}

\vspace{-0.01cm}
\hspace{0.0cm}
\begin{minipage}[l]{0.97\textwidth}
\noindent Research of C.H. was partially supported by project MTM2015-63791-R (MINECO/FEDER), and by project Gen. Cat. DGR 2017SGR1336.
Research of D.O. was partially supported by project PAPIIT IN106318 and IN107218 as well as CONACyT 282280.
Research of P.~P-L. was partially supported by project DICYT 041933PL Vicerrector\'ia de Investigaci\'on, Desarrollo e Innovaci\'on USACH (Chile), 
and Programa Regional STICAMSUD  19-STIC-02. 
Research of B.V. was partially supported by the Austrian Science Fund (FWF) within the collaborative DACH project \emph{Arrangements and Drawings} as FWF project \mbox{I 3340-N35}. 
\end{minipage}
}

Let $S$ be a set of $n$ points in the Euclidean plane in general position, that is, no three points of $S$ are collinear.
We denote by $h$ the number of extreme points of $S$, that is, the points of~$S$ that lie on the boundary of the convex hull of $S$. 
A $k$-gon (of $S$) is a simple polygon that is spanned by exactly $k$ points of $S$. 
A $k$-gon is called \emph{empty}, if it does not contain any points of $S$ in its interior. The vertex set of an empty convex $k$-gon is sometimes also called a {\it{free}} set~\cite{EdelmanR00}.
We denote by $\Xk(S)$ the number of empty convex $k$-gons in $S$, and more general, we denote by $\Xkl(S)$ the number of convex $k$-gons in $S$ that have exactly $\ell$ points of $S$ in their interior.  
Further, a convex $k$-gon with $\ell$ interior points constitutes a subset of $(k+\ell)$ points of~$S$ whose convex hull does not contain any other points of $S$. Such sets are sometimes also called {\it{islands}}~\cite{islas11}. Figure~\ref{fig:smallexamples5_2} shows a set $S$ of $10$ points and its values $\Xkl(S).$  Here we are interested in relations among the values $\Xkl(S)$ and invariants among all point sets of given cardinality. 
\begin{figure}[htb]
\centering
\includegraphics[page=1,scale=1]{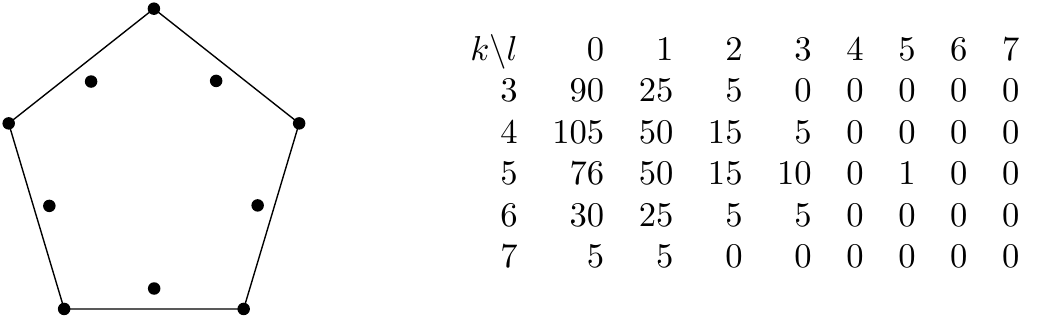} %
\caption{A set $S$ of $n=10$ points, with $h=5$ of them on the boundary of the convex hull, and the numbers of convex $k$-gons with $\ell$ interior points, $X_{k,\ell}(S)$.}
\label{fig:smallexamples5_2}
\end{figure}
Invariants for the values $\Xk(S)$ are well known. The equation 
\begin{equation}
\label{eqn_edelman}
\sum\limits_{k\geq 3} (-1)^{k+1} \Xk(S)  = \ \binom{n}{2}-n+1
\end{equation} was proved by Edelman and Jamison in~\cite{EdelmanJ85}, actually in terms of the number $F_k$ of free sets of cardinality $k$ in a convex geometry (Theorem 4.5 in~\cite{EdelmanJ85}), stating $\sum_{k}(-1)^{k}F_k=0.$  Note that in our notation we omit the term for the empty set $X_0:=1$, and $X_1(S)=n$ and $X_2(S)={{n}\choose{2}}$, the number of points and edges spanned by $S$, respectively. In~\cite{EdelmanJ85}, Equation~(\ref{eqn_edelman}) is also attributed to J.~Lawrence. Moreover, it also holds in higher dimension.  
The equation 
\begin{equation}
\label{eqn_ahrens}
\sum\limits_{k\geq 3} (-1)^{k+1} k \Xk(S)  = \ 2\binom{n}{2}-h,
\end{equation} which only depends on $n$ and on $h$, was first proved by Ahrens, Gordon, and McMahon~\cite{AhrensGM99}. Its higher dimensional version was proved by Edelman and Reiner~\cite{EdelmanR00} and by Klain~\cite{Klain99}.
Pinchasi, Radoi\v{c}i\'c, and Sharir~\cite{prs_oecp_06} provided elementary geometric proofs for Equations~(\ref{eqn_edelman}) and~(\ref{eqn_ahrens}), by using a continous motion argument of points.
They further proved other equalities and inequalites involving the values $\Xk(S)$ and extensions to higher dimension. 
As one main result, they showed that for planar point sets and $r \geq 2$, is holds that
\begin{equation}
\label{eqn_tr}
\sum\limits_{k\geq 2r} (-1)^{k+1} \frac{k}{r} \binom{k-r-1}{r-1} \Xk(S)  = \ -T_r(S)
\end{equation}
where $T_r(S)$ %
denotes the number of $r$-tuples of vertex-disjoint edges $(e_1,\dots , e_r)$ spanned by $S$
that lie in convex position, and are such that the region $\tau(e_1,\dots , e_r)$, formed by the intersection of the $r$ half-planes that are bounded by the lines supporting $(e_1,\dots , e_r)$ and contain the other edges, has no point of $S$ in its interior~\cite{prs_oecp_06}. 
For the point set $S$ of Figure~\ref{fig:smallexamples5_2} we have $T_2(S)=30$, $T_3(S)=25$, and $T_r(S)=0$ for $r\geq 4.$ 

In~\cite{prs_oecp_06}, the alternating sums of Equations (\ref{eqn_edelman}), (\ref{eqn_ahrens}) and (\ref{eqn_tr}) have been joined by the notion of the \emph{$r$-th alternating moment} $M_r(S)$ of $\{\Xk(S)\}_{k\geq 3}$ as
$$
	M_r(S) := \left\{ \begin{array}{ll}
		\sum\limits_{k\geq 3} (-1)^{k+1} \Xk(S) & \mbox{ for } r = 0 \\
		\sum\limits_{k\geq 3} (-1)^{k+1} \frac{k}{r} \binom{k-r-1}{r-1} \Xk(S) & \mbox{ for } r \geq 1. \  \footnotemark
	\end{array} \right.
$$
\footnotetext{Note that Equation~(\ref{eqn_tr}) and its proof require that $k\geq 2r$.}
In this work, we take a similar approach as done in the work of Pinchasi et al.~\cite{prs_oecp_06} and extend above-mentioned results to $\Xkl(S)$, that is, to convex $k$-gons having a fixed number $\ell$ of points of $S$ in their interior. 
We denote by
\[	\ASum(S) := \sum\limits_{k\geq 3} (-1)^{k+1} \Xkl(S) \]
the \emph{alternating sum} of $\{\Xkl(S)\}_{k\geq 3}$. In Section~\ref{sec:alt_sums}, we mostly concentrate on the case of polygons with one interior point, i.e., $\ell=1$, where we prove that  
$$ \ASum[1](S)  =  \sum\limits_{k\geq 3} (-1)^{k+1} X_{k,1}(S) = n-h $$ for any set $S$ of $n$ points with $h$ of them on the boundary of the convex hull. 
Further, equations for alternating sums of convex polygons with one given interior point $p \in S$ (denoted $A_1^p(S)$) and for alternating sums of convex polygons with one interior point and which contain a given edge $e$ spanned by two points of $S$ on the boundary (denoted $A_1(S;e)$) are obtained. 
We also derive inequalities for $\sum_{k=3}^{t}(-1)^{k+1}X_{k,1}(S)$ for any given value $t\geq 3$, based on analogous inequalities for $X_{k}$ from~\cite{prs_oecp_06}.
In Section~\ref{sec:weighted_sums}, we consider different weight functions $\fkl{k}{\ell}$ and general weighted sums over $\Xkl(S)$ of the form 
$$ \FSum(S) = \sum_{k\geq 3} \sum_{\ell\geq 0} \fkl{k}{\ell} \Xkl(S).$$
We show that for any function $\fkl{k}{\ell}$ with the property that 
\begin{equation}
\label{eq:rr_general1}
\fkl{k}{\ell} = \fkl{k+1}{\ell-1} + \fkl{k}{\ell-1},
\end{equation} 
the value of the according sum $\FSum(S)$ is invariant over all sets $S$ of $n$ points.
We present several functions $\fkl{k}{\ell}$ with this property. Among these functions, we find that
for any point set $S$ of $n$ points in general position and for any $x \in \R$, it holds that
$$
\sum_{k=3}^n \sum_{\ell=0}^{n-k} x^k \left(1+x\right)^\ell X_{k,\ell} = \left(1+x\right)^n -1 - x\cdot n - x^2 \binom{n}{2}.
$$
Note that Equation~(\ref{eqn_edelman}) is the special case $x=-1$; for this case and $\ell=0$, set the indeterminate form $\left(1+x\right)^\ell:=1$.
We further show that the maximum number of linearly independent equations 
$F_j(S) =\sum_{k \geq 3} \sum_{\ell \geq 0} f_j(k,\ell) X_{k,\ell}$, where each $f_j(k,\ell)$ satisfies Equation (\ref{eq:rr_general1}), in terms of the variables $X_{k,\ell}$, is $n-2$.
In Section~\ref{sec:moment_sums}, we relate the results of Section~\ref{sec:weighted_sums} to the moments from~\cite{prs_oecp_06}.
We denote by $\mrk{k}$ the multiplicative factor of $(-1)^{k+1}X_k(S)$ in $M_r(S)$, that is, 
\[
	\mrk{k} := \left\{\begin{array}{ll} 
				1 & \mbox{for\ } r=0 \\ 
				\frac{k}{r}\binom{k-r-1}{r-1} & \mbox{for\ } r \geq 1 \ \mbox{and $k \geq 2r$}\\
				0 & \mbox{otherwise}
				\end{array} \right.
\] 
We show the following relations, which only depend on the number $n$ of points. For any point set $S$ with cardinality $n$ and integer $0 \leq r\leq 2$ it holds that
\[ 
	\FrSum(S) := \sum_{k\geq 3} \sum_{\ell=0}^r (-1)^{k-\ell+1} \mrk[r-\ell]{k-\ell} \Xkl(S) = 
		\left\{\begin{array}{ll}
				\binom{n}{2}-n+1 		& \mbox{for\ } r=0 \\
				2\binom{n}{2}-n 		& \mbox{for\ } r=1 \\
				-\binom{n}{2}+n				& \mbox{for\ } r=2 \\ 
		\end{array}\right.
\]
Finally, in Section~\ref{sec:higher_dimensions} we discuss the generalization of the obtained results to higher dimensions.
Several more results, in particular on identities involving $X_{k,\ell}$ for special point configurations, can be found in the thesis~\cite{Torra19}.\\

An important argument that will be used in the proofs of this work is the continuous motion argument of points. 
 Of course, this argument is not new for the analysis of configurations in combinatorial geometry (see for example~\cite{AichholzerGOR07,AndrzejakAHSW98,prs_oecp_06,Tverberg66}). 
 The goal is to prove a property for all point sets in general position in the plane. To this end, the property is first shown to hold for some particular point set (usually a set of points in convex position). 
 Then, one can move the points from the point set, such that only one point is moved at each instant of time, until any particular point configuration is reached. 
 It remains to show that the property holds throughout. The changes on the combinatorial structure of the point set only appear when during a point move, one point becomes collinear with two other points. 
 It hence is sufficient to check that the property is maintained if one point crosses the edge spanned by two other points of the set. 
 An analogous proof strategy can be applied in $\R^d$ for $d>2$. 

\subsection{Related work}

The problem studied in this work is related to one of the most famous problems in combinatorial geometry, namely the one of showing that every set of sufficiently many points in the plane in general position determines a convex $k$-gon. 
The problem of determining the smallest integer~$f(k)$ such that every set of $f(k)$ points contains a convex $k$-gon was original inspiration of Esther Klein and 
has become well-known as the Erd\H{o}s-Szekeres problem.  
The fact that this number $f(k)$ exists for every $k$ and therefore, for given $k$, $\sum_{\ell \geq 0} \Xkl(S)\geq 1$  when  $S$ has sufficiently many points, was first established in a seminal paper of Erd\H{o}s and Szekeres~\cite{ES_1935} who proved the following bounds on $f(k)$~\cite{ES_1935,ES_1960}:
\[ 2^{k-2} + 1 \le f(k) \le \binom{2k-4}{k-2} + 1 .\] 
Subsequently, many improvements have been presented on the upper bound; 
see~\cite{Morris} for a survey on this problem.
Very recently, the problem has been almost settled by a work of Suk~\cite{Suk}, who showed that $f(k)=2^{k+o(k)}$. 
Another recent related work concerning the existence of convex $k$-gons in point sets is~\cite{DuqueMH18}. 
In a slight variation of the original problem, 
Erd\H{o}s suggested to find the minimum number of points $g(k)$ in the plane in general position containing $k$ points which form an empty convex $k$-gon. 
It is easy to show that for empty triangles and empty convex quadrilaterals this number is $3$ and $5$, respectively. 
Harborth~\cite{Harborth} showed in 1978 that $g(5)=10$. 
Thus $X_{k,0}(S)\geq 1$ for $k=3,4,5$ when $S$ has at least $g(k)$ points. 
However, in 1983 Horton~\cite{Horton} constructed an infinite set of points with no empty convex $7$-gon, implying that $g(k)$ is infinite for any $k \geq 7$.
Much later, Overmars \cite{Overmars} used a computer to find a set with $29$ points without an empty convex hexagon and later Gerken \cite{Gerken} and Nicolas~\cite{Nicolas} proved that $g(6)$ is finite. 

A weaker restriction of the convex polygon problem has been considered by Bialostocki, Dierker and Voxman \cite{Bialostocki-etal}. 
They conjectured that for any two integers $k\geq 3$ and $\ell \geq 1$, there exists a function $C(k,\ell)$ such that every set with at least $C(k,\ell)$ points contains a convex $k$-gon whose number of interior points is divisible by $\ell$. They also showed that their conjecture is true if $k \equiv 2 (\operatorname{mod} \ell)$ or $k \geq \ell+3$.

In parallel %
to questions concerning the existence of certain configurations in every large enough set of points in general position, also their number has been subject of research.
For example, Fabila-Monroy and Huemer~\cite{RuyClemensIslands12} considered the number of $k$-islands in point sets.
They showed that
for any fixed $k$, 
their number is in $\Omega(n^2)$ for any $n$-point set in general position 
and in $O(n^2)$ for some such sets.%

The question of determining the number of convex $k$-gons contained 
in any $n$-point set in general position was raised in the 1970s
by Erd\H{o}s and Guy~\cite{EG_1973}. 
The trivial solution for the case $k=3$ is $\binom{n}{3}$.  
However, already for convex \mbox{4-gons} this question turns out to be highly non-trivial, 
as it is related to the search for 
the minimum number of crossings in a straight-line drawing 
of the complete graph with $n$ vertices; see again~\cite{EG_1973}.

Erd\H{o}s also %
posed the respective question for the number $h_k(n)$ of empty convex $k$-gons~\cite{ER84}.
Horton's construction implies $h_k(n) = 0$ for every $n$ and every $k \geq 7$,
so it remains to consider the cases $k\!=\!3, \ldots, 6$.
For the functions $h_3(n)$ and $h_4(n)$, asymptotically tight estimates are known.
The currently best known bounds are
$n^2 + \Omega(n\log^{2/3}n) \le h_3(n) \le  1.6196n^2+o(n^2)$ 
and $\frac{n^2}{2} + \Omega(n\log^{3/4}n) \le h_4(n) \le 1.9397n^2+o(n^2)$,
where the lower bounds can be found in~\cite{Many5Holes_ARXIV} and the upper bounds 
are due to B\'{a}r\'{a}ny and Valtr~\cite{BV2004}.
For $h_5(n)$ and $h_6(n)$, no matching bounds are known.
The best known upper bounds $h_5(n)\le 1.0207n^2+o(n^2)$ and $h_6(n) \leq 0.2006n^2+o(n^2)$
can also be found in~\cite{BV2004}.
The best known lower bound $h_6(n) \ge n/229 - 4$ is due to Valtr~\cite{v-ephpp-12}. 
It is widely conjectured that $h_5(n)$ grows quadratically in~$n$. 
However, despite many efforts in the last 30 years, only very recently a superlinear bound 
of $h_5(n) = \Omega(n\log^{4/5}{n})$ has been shown~\cite{abhkpsvv-slbnh-17}.
A result of independent interest is by Pinchasi, Radoi\v{c}i\'c, and Sharir~\cite{prs_oecp_06}, 
who showed $h_4(n) \geq h_3(n) - \frac{n^2}{2} - O(n)$ and $h_5(n) \geq h_3(n) - n^2 - O(n)$. 
By this, any improvement of the constant 1 for the dominating factor $n^2$ of the lower bound of $h_3(n)$ 
would imply a quadratic lower bound for $h_5(n)$.

%
%
%
%
%
%

\section{Alternating sums}\label{sec:alt_sums} %
In this section, we concentrate on alternating sums of numbers of polygons with one interior point and possibly with some elements fixed. 
To this end, we introduce some more notation. 
Let $p$, $q$, and $r$ be three points of a set $S$ of $n$ points in general position in the plane. We denote by $\Delta{pqr}$ the triangle with vertices $p$, $q$, and
$r$, and by a directed edge $e=pq$ the segment that connects $p$ and $q$ and is oriented from $p$ to $q$. We do not always specify if an edge is directed or undirected when it is clear from the context. We sometimes also write polygon instead of convex polygon since all considered polygons are convex.  
We say that a polygon lies to left side of a directed edge $e$ if it contained in the left closed half-plane that is bounded by the line through~$e$. 
For a fixed point $p\in S$, we denote by $\Xkl[k,1]^p(S)$ the number of convex $k$-gons spanned by $S$ that contain exactly $p$ in their interior, and by 
$\ASum[1]^p(S):=  \sum_{k\geq 3} (-1)^{k+1} \Xkl[k,1]^p(S)$ the according alternating sum.
Further, for a directed edge $e$, we denote by $\Xkl(S;e)$ the number of convex $k$-gons with $\ell$ interior points that have $e$ as a boundary edge and lie on the left side of $e$; 
and by $\ASum(S;e):=  \sum_{k\geq 3} (-1)^{k+1} \Xkl(S;e)$ the according alternating sum. 
Likewise $\Xkl(S;p)$ is the number of convex $k$-gons with $\ell$ interior points that have $p$ on their boundary.
If more elements (points and/or edges) are required to be on the boundary of the polygons, then all those are listed after the semicolon in this notation. 
Further, if an element is required to not be on the boundary of the polygons, then it is listed with a minus.
For example, $\Xkl(S;e,p_1,-p_2)$ denotes the number of convex $k$-gons with $\ell$ interior points that are on the left of $e$, and have $e$ and $p_1$ on their boundary but not $p_2$.

In~\cite{prs_oecp_06}, the authors show as a side result (in the proof of Theorem 2.2) that the alternating sum 
of convex $k$-gons incident to (the left side of) a directed edge $e=pq$ is 1 if there is at least one point of $S\backslash\{p,q\}$ in the left side of $e$, and 0 otherwise. 
As we will repeatedly use this result, we explicitly state it as a lemma here. 
We remark that in~\cite{prs_oecp_06}, $\ASum[0](S)$ and $\ASum[0](S;e)$ are denoted as $M_0(S)$ and $M_0(e)$, respectively. %

\begin{lemma}[\cite{prs_oecp_06}] 
	\label{edgelemma}
	For any set $S$ of $n$ points in general position in the plane and any directed edge $e=pq$ spanned by two points of $S$, 
	it holds that
	\[ 
		\ASum[0](S;e) = \left\{\begin{array}{ll}
					1 & \mbox{ if $e$ has at least one point of $S\backslash\{p,q\}$ to its left} \\
					0 & \mbox{ otherwise.} \\
					\end{array}\right.
	\]

\end{lemma}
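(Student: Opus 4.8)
The plan is to use the continuous motion argument sketched in the introduction, but arranged so that the edge $e$ itself stays fixed, which keeps the combinatorial events to be checked to a minimum. Applying an affine transformation (which changes neither the combinatorial structure of $S$ nor the side of $e$ on which any point or polygon lies), I would assume that $p=(-1,0)$ and $q=(1,0)$, with $e$ directed from $p$ to $q$, so that ``lying to the left of $e$'' means ``lying in the closed upper half-plane''. Writing $t$ for the number of points of $S$ strictly above the $x$-axis, the statement to prove becomes: $\ASum[0](S;e)=1$ if $t\geq 1$ and $\ASum[0](S;e)=0$ if $t=0$.

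First I would settle the case in which $S$ is in convex position. Then every $(k-2)$-element subset $T$ of the $t$ points above the $x$-axis yields, together with $p$ and $q$, an empty convex $k$-gon that has $pq$ as a boundary edge and lies above $e$ (it is convex as a subset of a set in convex position, $pq$ is a hull edge since all of $T$ lies above it, and any other point of $S$ lies on the far side of some chord of the polygon, hence outside it); conversely every convex $k$-gon counted by $\Xkl[k,0](S;e)$ arises this way. Hence $\Xkl[k,0](S;e)=\binom{t}{k-2}$ and
\[
	\ASum[0](S;e)=\sum_{k\geq3}(-1)^{k+1}\binom{t}{k-2}
	=\sum_{j\geq1}(-1)^{j+1}\binom{t}{j},
\]
which equals $1$ for $t\geq 1$ and $0$ for $t=0$.

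Next I would move the points of $S$ from such a convex configuration to the given one, one point at a time, always keeping $p$ and $q$ fixed and moving every other point along a generic path inside its own open half-plane. This way $t$ — and more generally the split of $S\setminus\{p,q\}$ into points above and below the $x$-axis — stays constant, and no point ever meets the line through $e$; in particular the only combinatorial events are of the form ``a moving point $x\notin\{p,q\}$ crosses the segment $ab$ spanned by two other points of $S$, with $\{a,b\}\neq\{p,q\}$''. At such an event the only convex polygons relevant to $\ASum[0](S;e)$ whose status changes are (A) the polygons that have both $pq$ and $ab$ as boundary edges, lie above $e$, and are otherwise empty, whose emptiness toggles as $x$ passes through the edge $ab$; and (B) the polygons that have $pq$ as a boundary edge, lie above $e$, are empty, and in which $x$ is a vertex whose two neighbours are $a$ and $b$, which are created or destroyed as the vertex set passes through a collinearity. (That these are the only ones uses that at a generic event the sole collinear triple is $x,a,b$, so the boundary of an otherwise-empty polygon is crossed by a point of $S$ only through the edge $ab$, and the vertex set of such a polygon leaves convex position only at $x$, with neighbours $a,b$.) The assignment $Q\mapsto\mathrm{conv}(Q\cup\{x\})$ is a bijection from (A) to (B) that increases the number of vertices by one: the ``ear'' it adds to $Q$ is the triangle $abx$, which is degenerate at the event, and since no point of $S$ is collinear with $a$ and $b$ this triangle contains no point of $S$ once $x$ is close enough to $ab$, so adding $x$ preserves emptiness. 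On the side of the event where $x$ lies inside $Q$, neither $Q$ (now non-empty) nor $\mathrm{conv}(Q\cup\{x\})$ (whose vertex set is not in convex position) is counted; on the other side both are counted, contributing $(-1)^{|Q|+1}$ and $(-1)^{|Q|+2}$, which cancel. So each pair contributes $0$ to $\ASum[0](S;e)$ on both sides, $\ASum[0](S;e)$ is unchanged at every event, and with the base case the claim follows.

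The point that needs the most care is the bookkeeping in the previous paragraph: verifying that (A) and (B) really exhaust the polygons whose contribution changes, and that the ``empty ear'' observation applies uniformly to all polygons in (A). The reduction to a fixed edge $e$ is what makes this manageable, since it removes every event in which $e$ moves — and in particular the one event that could actually change the value of $\ASum[0](S;e)$, namely a point crossing the segment $pq$, which the chosen motion avoids outright.
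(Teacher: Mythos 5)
First, note that the paper does not actually prove Lemma~\ref{edgelemma} --- it is imported verbatim from~\cite{prs_oecp_06} (where it appears inside the proof of their Theorem~2.2) --- so your argument can only be measured against the general continuous-motion framework that the paper describes in the introduction and uses for Lemma~\ref{lem:one_fixed_point} and Theorem~\ref{thm:fkl}. Your overall strategy (normalize $e$, compute $\Xkl[k,0](S;e)=\binom{t}{k-2}$ for the convex-position base case so that the alternating sum telescopes to $1$ or $0$, then move the remaining points within their open half-planes and pair each affected $k$-gon with a $(k\pm 1)$-gon of opposite sign) is exactly that framework, and both the base case and the cancellation mechanism are correct.

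There is, however, a gap in your enumeration of events. You assert that the only combinatorial events are of the form ``the moving point $x$ crosses the segment $ab$'', i.e.\ collinearities in which the \emph{moving} point is the middle one of the collinear triple. A generic motion also produces collinearities of $a,b,x$ in which $x$ crosses the supporting line of $ab$ \emph{outside} the segment, so that a \emph{stationary} point is the middle one; geometrically, a moving edge $ax$ sweeps across the stationary point $b$. These events do change the individual counts: take $S=\{p,q,a,b\}$ with $b$ fixed and move $a$ so that $b$ passes from outside the triangle $\Delta pqa$ to inside it through the moving edge $qa$ --- at that instant $X_{4,0}(S;e)$ drops from $1$ to $0$ and $X_{3,0}(S;e)$ drops from $2$ to $1$, yet $a$ never crosses the segment $qb$ (at the critical moment $b$ lies between $q$ and $a$, not the other way around). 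Your case analysis, and in particular the parenthetical justifying that (A) and (B) are exhaustive, is phrased entirely in terms of $x$ being the crossing point, so these events are simply not checked. The repair is immediate --- run the identical pairing $Q\mapsto\mathrm{conv}(Q\cup\{y\})$ with $y$ the middle point of the collinear triple rather than the moving point, which is how the paper itself phrases the generic event (``one point crosses the edge spanned by two other points'') --- but as written the invariance of $\ASum[0](S;e)$ is not established at every event.
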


\begin{lemma}\label{lem:one_fixed_point}
	For any set $S$ of $n$ points in general position in the plane and any point $p \in S$ it holds that 
	\[ 
		\ASum[1]^p(S) = \left\{\begin{array}{ll}
					0 & \mbox{ if $p$ is an extreme point of $S$} \\
					1 & \mbox{ otherwise.} \\
					\end{array}\right.
	\]
\end{lemma}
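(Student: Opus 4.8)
The plan is to reduce the statement to a fact about empty convex polygons of the set $T:=S\setminus\{p\}$ and then evaluate the resulting alternating sum with the help of Equation~(\ref{eqn_edelman}) and Lemma~\ref{edgelemma}. I assume $n\ge 3$, the cases $n\le 2$ being trivial. The key first observation is that a convex $k$-gon of $S$ whose only interior point is $p$ is precisely an empty convex $k$-gon of $T$ (i.e.\ a convex $k$-gon spanned by $T$ with no point of $T$ inside) that contains $p$ in its interior: its $k$ vertices lie in $T$, no point of $T$ is inside, $p$ is inside, and conversely. So, writing $E_k$ for the number of empty convex $k$-gons of $T$ containing $p$ in their interior, one has $X_{k,1}^p(S)=E_k$ and hence $\ASum[1]^p(S)=\sum_{k\ge 3}(-1)^{k+1}E_k=:g$. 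Since, by general position, $p$ is an extreme point of $S$ exactly when $p\notin\operatorname{conv}(T)$ and a non-extreme point exactly when $p\in\operatorname{int}\operatorname{conv}(T)$, it is enough to show $g=0$ in the former case and $g=1$ in the latter.

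Next I would relate empty convex polygons of $S$ and of $T$. An empty convex $k$-gon of $S$ either has $p$ as a vertex, or is spanned by $T$; in the latter case, being empty in $S$ just means it is an empty convex $k$-gon of $T$ that additionally does not contain $p$, and there are $X_k(T)-E_k$ of those. Writing $X_{k,0}(S;p)$ for the number of empty convex $k$-gons of $S$ with $p$ on their boundary, this gives $X_k(S)=X_{k,0}(S;p)+X_k(T)-E_k$. Taking the alternating sum over $k\ge 3$ and applying Equation~(\ref{eqn_edelman}) once to $S$ and once to $T$ (which has $n-1$ points), a short computation using $\binom{n}{2}-\binom{n-1}{2}=n-1$ yields
\[ \sum_{k\ge 3}(-1)^{k+1}X_{k,0}(S;p)=n-2+g. \]

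It remains to evaluate that left-hand side directly, and here Lemma~\ref{edgelemma} does the work. Traversing an empty convex $k$-gon of $S$ that passes through $p$ counterclockwise, it leaves $p$ along a unique edge $\overrightarrow{pq}$ and then lies to the left of $\overrightarrow{pq}$, and it is counted by exactly this directed edge; hence $X_{k,0}(S;p)=\sum_{q\in S\setminus\{p\}}X_{k,0}(S;\overrightarrow{pq})$, and by Lemma~\ref{edgelemma},
\[ \sum_{k\ge 3}(-1)^{k+1}X_{k,0}(S;p)=\sum_{q\in S\setminus\{p\}}\ASum[0](S;\overrightarrow{pq})=\bigl|\{\, q\in S\setminus\{p\}:\ \overrightarrow{pq}\ \text{has a point of}\ S\ \text{strictly to its left}\,\}\bigr|. \]
If $p$ is not extreme, then every line through $p$ has points of $S$ on both sides, so this quantity equals $n-1$ and therefore $g=1$. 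If $p$ is extreme with convex-hull neighbours $a$ and $b$, then exactly one of $\overrightarrow{pa},\overrightarrow{pb}$ (the one pointing backwards along the hull boundary) has all other points of $S$ strictly to its right, while every other directed edge from $p$ has points of $S$ on both sides; so the quantity equals $n-2$ and therefore $g=0$. Combining with the previous paragraph, this is exactly the claimed value of $\ASum[1]^p(S)$.

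The step I expect to need the most care is the opening reduction --- in particular the bookkeeping that, for a $k$-gon spanned by $T$, ``empty in $S$'' is the same as ``empty in $T$ and not containing $p$'' --- after which everything is driven by Equation~(\ref{eqn_edelman}) and Lemma~\ref{edgelemma}. A self-contained alternative that avoids Equation~(\ref{eqn_edelman}) is to prove directly, via the continuous-motion argument, that $g$ depends only on whether $p$ lies in $\operatorname{conv}(T)$: start from a configuration in convex position (where $p$ is extreme and $g=0$); check that $g$ is unchanged when a point other than $p$ crosses an edge spanned by two other points (an edge-crossing count analogous to the one behind Equation~(\ref{eqn_edelman}), but retaining only the polygons that contain the fixed point $p$) and when $p$ crosses a chord of $T$ that is not a hull edge, and that $g$ changes by exactly $\pm1$ when $p$ crosses a hull edge of $T$, the sign matching entry into or exit from $\operatorname{conv}(T)$.
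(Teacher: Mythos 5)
Your proof is correct, but it takes a genuinely different route from the paper's. The paper argues by continuous motion: the extreme case is dismissed as trivial, and for non-extreme $p$ it first places $p$ next to a convex hull edge $e$, where $\ASum[1]^p(S)=\ASum[0](S\setminus\{p\};e)=1$ by Lemma~\ref{edgelemma}, and then moves $p$ to an arbitrary interior position along a path avoiding the vertices of the arrangement of $S\setminus\{p\}$, checking via Lemma~\ref{edgelemma} (applied to both orientations of any crossed chord) that the alternating sum never changes. You instead give a static double count: writing $T=S\setminus\{p\}$, you decompose $X_k(S)=X_{k,0}(S;p)+X_k(T)-E_k$, apply Equation~(\ref{eqn_edelman}) to both $S$ and $T$ to obtain $\sum_{k}(-1)^{k+1}X_{k,0}(S;p)=n-2+g$, and evaluate the left-hand side by fanning the empty polygons through $p$ over their unique outgoing directed edge $\overrightarrow{pq}$ in the counterclockwise traversal and applying Lemma~\ref{edgelemma}; the dichotomy then reduces to counting how many directed edges $\overrightarrow{pq}$ have a point of $S\setminus\{p,q\}$ strictly to their left ($n-1$ for interior $p$, $n-2$ for extreme $p$). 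All steps check out, including the fan decomposition (each polygon is counted by exactly one outgoing edge, since it lies to the right of the reversal of its incoming edge) and the arithmetic $\binom{n}{2}-\binom{n-1}{2}=n-1$; the only pedantic caveat is that for $n=3$ you apply Equation~(\ref{eqn_edelman}) to a two-point set, where it holds vacuously. What your approach buys is a motion-free, purely combinatorial derivation that also treats the extreme case uniformly rather than trivially; what it costs is the extra input of Equation~(\ref{eqn_edelman}) and, more importantly, the planarity of the fan around $p$: the paper's motion argument transfers verbatim to $\R^d$ (Lemma~\ref{lem:one_fixed_point_rd}), whereas the cyclic order of outgoing edges at $p$ has no direct higher-dimensional analogue --- the same obstruction the paper notes when explaining why the results on $\ASum[1]^p(S;e)$ do not carry over.
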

\begin{proof}
	Obviously, if $p$ is an extreme point of $S$, then it cannot be in the interior of any polygon spanned by points of $S$ and hence $\ASum[1]^p(S) = 0$.
	So assume that $p$ is not an extreme point of~$S$. Note that every polygon in $S$ that contains exactly $p$ in its interior is an empty polygon in $S\setminus\{p\}$.

	If $p$ lies close enough to a convex hull edge $e$ of $S$ (an edge on the boundary of the convex hull of $S$), then $p$ is contained in exactly all polygons that are incident to $e$ and empty in $S\setminus\{p\}$.
	Hence, $\ASum[1]^p(S) = \ASum[0](S\setminus\{p\};e) = 1$ by Lemma~\ref{edgelemma}. %
	Otherwise, if $p$ is located arbitrarily, consider a continuous path
	from $p$ to a position close enough to a convex hull edge of $S$, and move $p$ along this path. This path can be chosen such that it avoids all crossings in the line arrangement spanned by $S\setminus\{p\}$ and lies inside the convex hull of $S$.  
	During this movement, $\ASum[1]^p(S)$ can only change when $p$ crosses an edge $qr$ spanned by two points of $S$. 
	Further, changes can only occur from changing amounts of convex $k$-gons that have $qr$ as an edge. 
	More exactly, when $p$ is moved over~$qr$, from its left to its right side, then the alternating sum of polygons that $p$ ``stops being inside'' is $\ASum[0](S\setminus\{p\};qr) = 1$, and the alternating sum of polygons that~$p$ ``starts being inside'' is $\ASum[0](S\setminus\{p\};rq) = 1$ (note that $qr$ is not a convex hull edge of $S$). 
	Hence, the value $\ASum[1]^p(S)$ is the same for all possible positions of $p$ on the path, including the final position for which we already showed $\ASum[1]^p(S) = 1$. %
\end{proof}

\begin{theorem}\label{thm:one_point}
	Given a set $S$ of $n$ points in general position in the plane, $h$ of them extreme, it holds that $\ASum[1](S) = n-h$.
\end{theorem}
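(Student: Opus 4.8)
The plan is to reduce the global alternating sum to a sum of the single-point alternating sums handled by Lemma~\ref{lem:one_fixed_point}. The key observation is that every convex $k$-gon spanned by $S$ that has \emph{exactly one} interior point has a \emph{unique} such interior point, so the family of all convex $k$-gons with one interior point partitions according to which point of $S$ is inside. Concretely, this gives $\Xkl[k,1](S) = \sum_{p\in S} \Xkl[k,1]^p(S)$ for every $k\geq 3$.

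From here I would simply substitute and exchange the order of summation:
$$
\ASum[1](S) = \sum_{k\geq 3} (-1)^{k+1} \Xkl[k,1](S)
= \sum_{k\geq 3} (-1)^{k+1} \sum_{p\in S} \Xkl[k,1]^p(S)
= \sum_{p\in S} \sum_{k\geq 3} (-1)^{k+1} \Xkl[k,1]^p(S)
= \sum_{p\in S} \ASum[1]^p(S).
$$
All sums are finite (for fixed $S$ only finitely many $k$ occur), so the interchange is unproblematic.

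Finally I would invoke Lemma~\ref{lem:one_fixed_point}: $\ASum[1]^p(S)=0$ when $p$ is an extreme point of $S$ and $\ASum[1]^p(S)=1$ otherwise. Since exactly $h$ of the $n$ points of $S$ are extreme, the sum over $p\in S$ equals $0\cdot h + 1\cdot(n-h) = n-h$, which is the claimed identity.

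There is essentially no hard step here; the only thing to be careful about is justifying the partition statement $\Xkl[k,1](S) = \sum_{p\in S} \Xkl[k,1]^p(S)$, i.e., that a convex polygon counted by $\Xkl[k,1]$ contributes to $\Xkl[k,1]^p$ for one and only one $p$ — which is immediate from the definition of ``exactly one interior point''. The substantive content has already been carried by Lemma~\ref{lem:one_fixed_point} (and, through it, by Lemma~\ref{edgelemma} and the continuous-motion argument).
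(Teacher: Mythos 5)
Your proof is correct and is essentially identical to the paper's: both decompose $\ASum[1](S)=\sum_{p\in S}\ASum[1]^p(S)$ by the unique interior point of each counted polygon and then apply Lemma~\ref{lem:one_fixed_point}. You have merely spelled out the (finite) interchange of summation that the paper leaves implicit.
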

\begin{proof}
	Any convex $k$-gon counted	in $\ASum[1](S)$ contains exactly one point in its interior. 
	Hence, we can count the convex $k$-gons by their interior points and obtain that $\ASum[1](S)=\sum_{p\in S} \ASum[1]^p(S) = n-h$.
\end{proof}

In the previous result we used $\ASum[0](S;e)$ to obtain bounds on $\ASum[1]^p(S)$ and determine $\ASum[1](S)$. %
A possible approach for determining $\ASum[2](S)$ could be via $\ASum[1](S;e)$. In the following, we show why such an approach cannot work.

\begin{lemma}\label{one_fixed_edge_one_fixed_point}
	Given a set $S$ of $n$ points in general position in the plane, a point $p \in S$, and a directed edge $e=qr$ of $S\setminus\{p\}$, it holds that
	$\ASum[1]^p(S;e) \in \{ -1, 0, 1 \}$.
\end{lemma}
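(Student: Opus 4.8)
The plan is to reduce the statement, through an elementary counting identity, to bounding the alternating sum $B$ of the empty convex polygons that carry the fixed edge $qr$ and have the fixed point $p$ as one of their \emph{vertices}, and then to control $B$ by moving $p$ continuously; the last step is where the real difficulty lies. First, the trivial case: if $p$ does not lie strictly to the left of the directed edge $qr$, then every convex polygon that has $qr$ on its boundary and lies to the left of $qr$ is contained in the closed left half-plane of the line through $q$ and $r$ and therefore cannot contain $p$ in its interior, so $X_{k,1}^p(S;qr)=0$ for all $k$ and $A_1^p(S;qr)=0$. Assume from now on that $p$ lies strictly to the left of $qr$.

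For the reduction, partition the empty convex $k$-gons of $S\setminus\{p\}$ having $qr$ as a left-edge according to whether $p$ lies in their interior, and partition the empty convex $k$-gons of $S$ having $qr$ as a left-edge according to whether $p$ is one of their vertices (it cannot lie in the interior of an empty polygon of $S$). A polygon of the first family that does not contain $p$ is exactly a polygon of the second family that does not have $p$ as a vertex, so for every $k\ge 3$
\[
  X_{k,0}(S\setminus\{p\};qr) \;=\; X_{k,1}^p(S;qr) \;+\; X_{k,0}(S;qr) \;-\; Y_k ,
\]
where $Y_k$ is the number of empty convex $k$-gons of $S$ that have $qr$ as a left-edge and $p$ as a vertex. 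Taking the alternating sum and writing $B:=\sum_{k\ge 3}(-1)^{k+1}Y_k$, we obtain $A_1^p(S;qr) = A_0(S\setminus\{p\};qr) - A_0(S;qr) + B$. Since $p$ is a point of $S\setminus\{q,r\}$ to the left of $qr$, Lemma~\ref{edgelemma} gives $A_0(S;qr)=1$, and the same lemma gives $A_0(S\setminus\{p\};qr)\in\{0,1\}$; hence $A_1^p(S;qr)=A_0(S\setminus\{p\};qr)-1+B$, and it suffices to prove $B\in\{0,1\}$.

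To bound $B$, note that every polygon counted by some $Y_k$ has $p,q,r$ among its vertices and hence contains the triangle $\Delta pqr$; so if $\Delta pqr$ is not empty then all $Y_k$ vanish and $B=0$. If $\Delta pqr$ is empty then $Y_3=1$, and deleting from a polygon counted by $Y_k$ (with $k\ge4$) any vertex other than $p,q,r$ produces one counted by $Y_{k-1}$, so the indices $k$ with $Y_k\ge1$ form an initial interval $\{3,\dots,K\}$. I would evaluate $B$ by viewing it as a function of the position of $p$ with $S\setminus\{p\}$ fixed: it is constant on each face of the line arrangement of $S\setminus\{p\}$; moving $p$ straight toward a point of the open segment $qr$ only shrinks $\Delta pqr$ (so it stays empty), and once $p$ is sufficiently close to $qr$ the only polygon counted is $\Delta pqr$ itself, so $B=1$ there. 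It then remains to control how $B$ changes whenever $p$ crosses an edge $st$ spanned by $S\setminus\{p\}$ — the change being a difference of two alternating sums of empty polygons of $S\setminus\{p\}$ incident simultaneously to $qr$ and to $st$ (respectively $ts$) and carrying $p$ as a vertex. Controlling these changes is the main obstacle: $B$ is not locally constant and is not determined by whether $\Delta pqr$ is empty (e.g., $B$ can be $0$ even when $\Delta pqr$ is empty), so one genuinely has to analyze how the incidences with the fixed edge $qr$ evolve as $p$ sweeps the arrangement, and show that every crossing changes $B$ by $-1$, $0$ or $+1$ while the running value stays in $\{0,1\}$ — equivalently, that $A_1^p(S;qr)$ never leaves $\{-1,0,1\}$ under the motion. (The same difficulty appears if one tries to prove the statement by a continuous motion of $p$ without the reduction above.) Once $B\in\{0,1\}$ is established, $A_1^p(S;qr)=A_0(S\setminus\{p\};qr)-1+B\in\{-1,0,1\}$.
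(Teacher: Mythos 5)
There is a genuine gap, and also an error in the intermediate target. Your counting identity $A_1^p(S;qr) = A_0(S\setminus\{p\};qr) - A_0(S;qr) + B$ is correct, but it buys essentially nothing: in the main case, where some point of $S\setminus\{p,q,r\}$ lies to the left of $qr$, both $A_0$-terms equal $1$ by Lemma~\ref{edgelemma}, so the identity reads $A_1^p(S;qr)=B$ and ``bounding $B$'' is verbatim the original problem. Worse, the sufficient condition you aim for, $B\in\{0,1\}$, is false. Take $q=(0,0)$, $r=(1,0)$, $p=(0.5,0.1)$, $a=(0,0.15)$, $b=(1,0.15)$: the empty convex polygons with $qr$ as a left-edge and $p$ as a vertex are the triangle $\Delta pqr$ and the two quadrilaterals $qrbp$ and $qrpa$ (the five points are not in convex position), so $B=1-2+0=-1$; correspondingly $A_1^p(S;qr)=-1$, witnessed by the single empty quadrilateral $qrba$ of $S\setminus\{p\}$ that contains $p$. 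So any argument that keeps a ``running value in $\{0,1\}$'' cannot succeed. Finally, the step you yourself flag as ``the main obstacle'' --- controlling the jump of $B$ when $p$ crosses an edge $st$ --- is exactly the content of the lemma and is not carried out: the jump is an alternating sum of empty polygons incident simultaneously to $qr$ and to $st$ with $p$ as a vertex, a quantity that Lemma~\ref{edgelemma} does not control and that is not obviously in $\{-1,0,1\}$. A proof sketch that defers precisely this point has not proved the statement.

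For comparison, the paper does not move $p$ at all in this lemma. It writes $A_1^p(S;e)=A_0(S\setminus\{p\};e)-A_0(S;e,-p)$, where $A_0(S;e,-p)$ is the alternating sum of empty polygons of $S$ on $e$ avoiding $p$ as a vertex (so $A_0(S;e,-p)=A_0(S;e)-B=1-B$ in your notation), and evaluates this quantity \emph{statically}: after discarding the cases where $p$ is extreme or $\Delta pqr$ is non-empty, it splits the plane into four wedges at $p$ by the lines $pq$ and $pr$, observes that the polygons in question have all extra vertices in the $q$- and $r$-wedges, handles the $r$-wedge-only polygons by Lemma~\ref{edgelemma}, and disposes of the polygons using the $q$-wedge by a first-vertex decomposition plus a direct parity count (the Claim with its three cases and the $L,R$ bookkeeping). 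This yields $A_0(S;e,-p)\in\{0,1,2\}$, i.e.\ $B\in\{-1,0,1\}$, which is the correct range. If you want to salvage your plan, you would need to replace the false target $B\in\{0,1\}$ by $B\in\{-1,0,1\}$ and then supply a genuine argument --- static, as in the paper, or a worked-out analysis of the edge-crossing events --- for that bound.
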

\begin{proof}
	The idea for obtaining $\ASum[1]^p(S;e)$ is to start with $\ASum[0](S\setminus \{p\};e) = 1$ 
	and subtract from it the alternating sum of all convex polygons that stay empty when adding $p$ again to $S$.  
	In the following, we denote the latter with $\ASum[0](S;e,-p)$. So we have 
	\[ \ASum[1]^p(S;e) = \ASum[0](S\setminus \{p\};e) - \ASum[0](S;e,-p). \]
	As we are only counting polygons on the left side of $e$, we assume without loss of generality that $e$ is a convex hull edge of $S$, 
	and $S$ lies to the left of $e$.

	Obviously, if $p$ is an extreme point of $S$, then it cannot be in the interior of any polygon spanned by points of $S$, and hence $\ASum[1]^p(S;e)=0$.
	Likewise, if the triangle $\Delta{pqr}$ contains any points of $S$ in its interior, $\ASum[1]^p(S;e)=0$.
	So assume that $p$ is not an extreme point of $S$ and that $\Delta{pqr}$ is interior-empty. 
	Consider the supporting lines $\ell_q$ and $\ell_r$ of $pq$ and $pr$, respectively, and the four wedges bounded by these lines. 
	One of the wedges contains the triangle $\Delta{pqr}$, one contains $p$ and $q$ but not $r$ (we call it the $q$-wedge), 
	one contains $p$ and $r$ but not $q$ (we call it the $r$-wedge), 
	and the last one contains $p$ but not $q$ and $r$ (we call it the $p$-wedge); see Figure~\ref{fig:wedges}.

\begin{figure}[htb]
\centering
\includegraphics[page=1,scale=1]{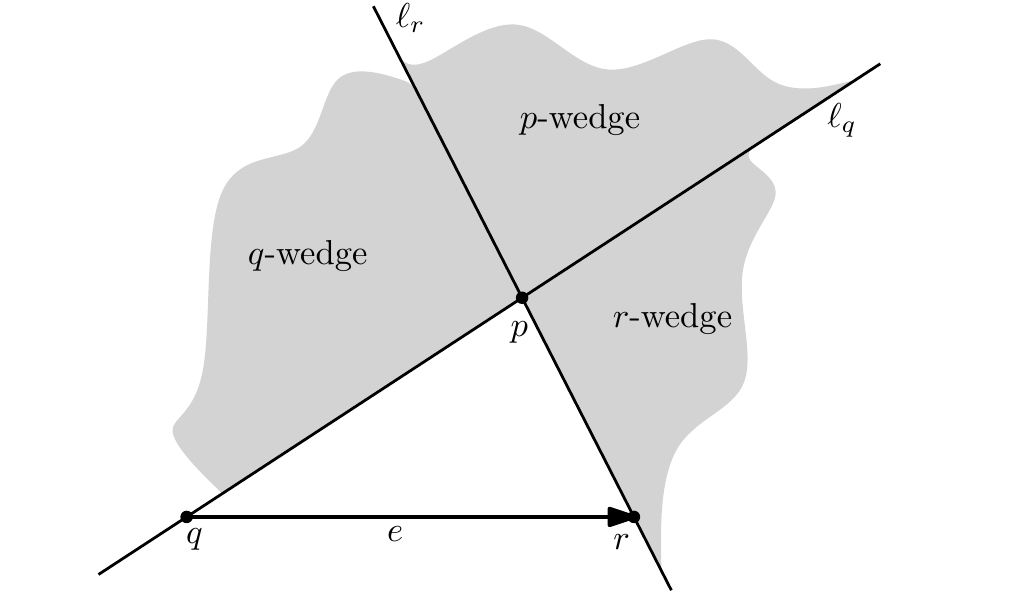} %
\caption{Illustration of wedges.}
\label{fig:wedges}
\end{figure}

	Note that any convex polygon that has $e$ as an edge and a vertex in the $p$-wedge contains $p$. Hence, any polygon counted in $\ASum[0](S;e,-p)$ has all its vertices (except $q$ and $r$) in the interior of the 
	$q$-wedge and the $r$-wedge.
	If both, the $q$-wedge and the $r$-wedge are empty of points (i.e., $p$ lies close enough to $e$) then $\ASum[0](S;e,-p)=0$ and $p$ is contained in exactly all polygons that are incident to $e$ and empty in $S\setminus\{p\}$. 
	Hence, $\ASum[1]^p(S;e) = \ASum[0](S\setminus\{p\};e) = 1$ by Lemma~\ref{edgelemma}. %
	So assume that at least one of the $q$-wedge and the $r$-wedge, without loss of generality\ the $r$-wedge, contains at least one point of $S$ in its interior.

	We first count all polygons for $\ASum[0](S;e,-p)$ that have all vertices except $q$ and $r$ in the interior of the $r$-wedge. 
	The alternating sum of those polygons is $\ASum[0](S \cap H_q;e,-p)=\ASum[0](S \setminus \{p\} \cap H_q;e)$, 
	where $H_q$ is the closed half-plane bounded by $\ell_q$ that contains $e$. 
	By Lemma~\ref{edgelemma}, $\ASum[0](S \cap H_q;e,-p) = 1$. %

	For counting the polygons in $\ASum[0](S;e,-p)$ that have at least one vertex in the $q$-wedge, 
	we consider the points in the interior of the $q$-wedge in counterclockwise order around $p$ 
	(such that $q$ is ``after'' the last point) and denote them by $p_1, \ldots p_m$.
	Then for every $i \in \{1, ... m\}$, consider the line $\ell_i$ through $p$ and $p_i$ and the closed half-plane $H_i$ bounded by $\ell_i$ that contains $qr$. 
	Let $S_i=S\cap H_i$;  %
	see Figure~\ref{fig:qwedgesorting}.

\begin{figure}[htb]
\centering
\includegraphics[page=2,scale=1]{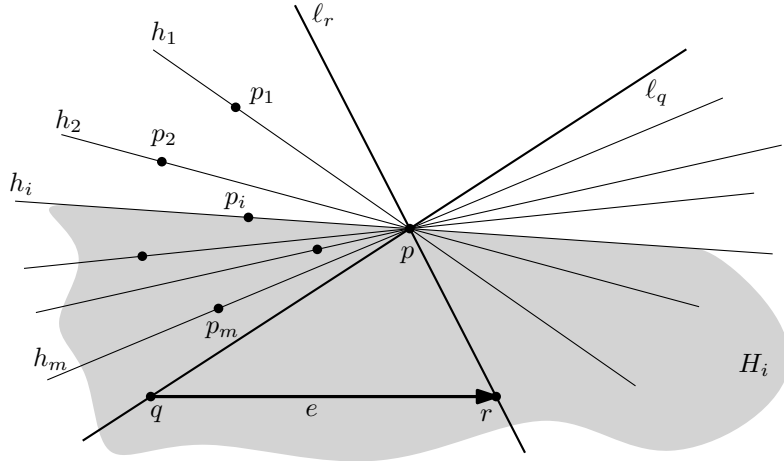} %
\caption{Sorting in $q$-wedge and definition of $H_i$.}
\label{fig:qwedgesorting}
\end{figure}

\begin{claim} \emph{
	The alternating sum $\ASum[0](S_i;e,p_i,-p)$ of empty polygons lying in $H_i$ 
	that contain the triangle $\Delta{p_iqr}$ and do not have $p$ as a vertex is $1$ if $S_i = \{p, q, r, p_i\}$, and $0$ otherwise. 
	}
\end{claim}

	We first complete the proof %
	under the assumption that the claim holds and then prove the claim.
	Note that every counted polygon that has a vertex in the $q$-wedge, has a unique first such vertex in the cyclic order around $p$ for which we can count it.
	Hence, the total alternating sum over those polygons is the sum of $i\in\{1, \ldots, m\}$ of the alternating sums $\ASum[0](S_i;e,p_i,-p)$ with $p_i$ being the first used point in the $q$-wedge.
	Further, note that for any $i<m$ it holds that $p_m \in S_i$ and hence $S_i\neq \{p, q, r, p_i\}$. 
	Thus, the alternating sum of those empty polygons with at least one vertex in the $q$-wedge is 
	$\sum_{i=1}^m \ASum[0](S_i;e,p_i,-p) = \ASum[0](S_m;e,p_m,-p) \in \{0,1\}$. 

	Altogether, we obtain $\ASum[0](S;e,-p) = \ASum[0](S \cap H_q;e,-p) + \ASum[0](S_m;e,p_m,-p) \in \{1,2\}$. %
	Combining this with $\ASum[0](S\setminus \{p\};e) = 1$, we obtain that $\ASum[1]^p(S;e) = \ASum[0](S\setminus \{p\};e) - \ASum[0](S;e,-p) \in\{0,-1\}$ 
	if at  least one of the $q$-wedge and the $r$-wedge contains points of $S$. The lemma thus follows.

	\medskip
	{\it Proof of Claim.}
	If $S_i=\{p, q, r, p_i\}$, then the only such empty polygon in $H_i$ is the triangle $\Delta{p_iqr}$, and hence the alternating sum 
	$\ASum[0](S_i;e,p_i,-p)$ is equal to 1.
	If $S_i\neq\{p, q, r, p_i\}$, then we consider the triangle $\Delta{p_iqr}$, which splits $H_i$ into three wedges; one is bounded by lines $\ell_i$ and $\ell_q$, another one by $\ell_q$ and~$\ell_r$, and the third one by $\ell_r$ and $\ell_i$. We distinguish three cases. 

	\smallskip
	Case 1. The triangle $\Delta{p_iqr}$ contains a point of $S_i$ in its interior. Then no polygon containing $\Delta{p_iqr}$ can be empty. Hence, the alternating 
	sum $\ASum[0](S_i;e,p_i,-p)$ is 0.

	\smallskip
	Case 2. All points of $S_i\setminus\{p_i,p,q,r\}$ lie in one of the other two wedges, without loss of generality the one bounded by $\ell_i$ and $\ell_q$.
	Then, each counted convex $k$-gon in $\ASum[0](S_i;e,p_i,-p)$ with $k\geq 4$ vertices corresponds to a convex $(k-1)$-gon incident to $qp_i$ and not having $r$ as a vertex. 
	The alternating sum of those is $1$ by Lemma~\ref{edgelemma}. %
	Inverting all signs and adding 1 for the triangle $\Delta{p_iqr}$, we obtain a total of $-1+1=0$.

	\smallskip
	Case 3. Both wedges contain points of $S_i\setminus\{p_i,p,q,r\}$ in the interior (and the triangle $\Delta{p_iqr}$ is empty). 
	We have three different types of polygons that we have to count: 
	(i) the triangle $\Delta{p_iqr}$, 
	(ii) the polygons having additional vertices in only one wedge, and 
	(iii) the polygons having additional vertices in both wedges. 
	For the latter, note that the union of the vertex sets of two empty polygons, one contained in each wedge, and containing the edge $qp_i$, respectively $p_ir$, gives a polygon that is counted in $\ASum[0](S_i;e,p_i,-p)$. %

	Let $S_i^q$ and $S_i^r$ be the points of $S_i\setminus\{p\}$ in the wedges bounded by $\ell_i$ and $\ell_q$, and bounded by $\ell_r$ and $\ell_i$, respectively. 
	Note that $\ASum[0](S_i^q;qp_i)= \ASum[0](S_i^r;p_ir) = 1$. 
	Let $L=\sum_{k\geq 3, \ k \mbox{ odd}} \Xkl[k,0](S_i^q;qp_i)$ be the number of such convex $k$-gons with odd $k$. 
	As $\ASum[0](S_i^q;qp_i)=1$, the respective number of even polygons is $L - \ASum[0](S_i^q;qp_i)= L-1$. 
	Similarly, let $R=\sum_{k\geq 3, \ k \mbox{ odd}} \Xkl[k,0](S_i^r;p_ir)$ be the number of such convex $k$-gons in the other wedge with odd $k$.
	Then the number of even convex $k$-gons in that wedge is $R-1$.

	For polygons of type (iii), note that combining two polygons with the same parity in the number of vertices, we obtain a polygon with an odd number of vertices, while combining two polygons with different parities we obtain a polygon with an even number of vertices.
	Hence, the alternating sum for polygons of type (iii) is $(LR + (L-1)(R-1)) - (L(R-1) + (L-1)R) = (2LR -L-R+1)-(2LR-L-R)=1$.
	
	For polygons of type (ii), each polygon with an even number of vertices gives a polygon with an odd number of vertices when combined with the triangle $\Delta{p_iqr}$, and vice versa. 
	Hence, the alternating sum for polygons of type (ii) is $((L-1)+(R-1)) - (R+L) = -2$.

	Finally, for type (i) we only have the triangle $\Delta{p_iqr}$, which contributes $+1$ to the alternating sum.
	Hence altogether we obtain an alternating sum of $+1 -2 +1 = 0$ also for the third case, which completes the proof of the claim.
\end{proof}

Note that $\ASum[1](S;e)$, the alternating sum of convex polygons having $e$ as an edge and exactly one interior point, highly depends on the position of the points of $S$ and cannot be expressed by the number of extreme and non-extreme points of $S$.
On the other hand, $\ASum[1](S;e) = \sum_{p\in S} \ASum[1]^p(S;e)$ and hence we can use Lemma~\ref{one_fixed_edge_one_fixed_point}
to derive bounds for $\ASum[1](S;e)$, analogous to Lemma~\ref{edgelemma} for~$\ASum[0](S;e)$.

\begin{theorem}\label{one_fixed_edge_one_arb_point}
	For any set $S$ of $n$ points in general position in the plane, $h$ of them on the boundary of the convex hull, and any edge $e=qr$ of $S$ it holds that
	$\max\{h,4\}-n \leq \ASum[1](S;e) \leq 1$.
\end{theorem}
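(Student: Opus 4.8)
The plan is to use the decomposition $\ASum[1](S;e) = \sum_{p\in S} \ASum[1]^p(S;e)$ together with Lemma~\ref{one_fixed_edge_one_fixed_point}, which tells us each summand lies in $\{-1,0,1\}$. Since $e=qr$ is a boundary edge of (the counting region), the points $q$ and $r$ contribute $0$ because no polygon can have a vertex in its own interior, and similarly any extreme point of $S$ contributes $0$. So only the non-extreme points can contribute a nonzero term. This immediately gives the \emph{upper bound direction in a weak form} ($\ASum[1](S;e) \le n-h$) and the lower bound $\ASum[1](S;e) \ge -(n-h) \ge h-n$; combined with the trivial observation that when $h = n$ (all points extreme) every summand is $0$, we also need the term $\max\{h,4\}$ rather than just $h$, which means I must separately argue that at least $4$ of the non-extreme points contribute $0$, or more precisely that the number of points contributing $-1$ is at most $n-\max\{h,4\} = n-h$ when $n-h\ge 4$ but is more restricted when $n-h<4$. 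Actually the cleaner reading: the lower bound $h-n$ is only the interesting bound when $n-h \ge 4$; when $n - h \le 3$ we claim $\ASum[1](S;e) \ge 4-n = \max\{h,4\}-n$, i.e. at most $n-4$ summands equal $-1$, which is automatic since $q,r$ and the two endpoints of any other boundary edge all contribute $0$ — giving at least four zero-summands regardless of $h$.

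So concretely: First I would record that $\ASum[1](S;e) = \sum_{p\in S}\ASum[1]^p(S;e)$ by partitioning the counted polygons according to their unique interior point. Then, by Lemma~\ref{one_fixed_edge_one_fixed_point}, write $\ASum[1](S;e) = P - N$ where $P$ (resp.\ $N$) is the number of points $p$ with $\ASum[1]^p(S;e)=+1$ (resp.\ $=-1$), and $P+N \le$ (number of points that can possibly be interior to some polygon with edge $e$ on its left). For the \textbf{upper bound} $\ASum[1](S;e)\le 1$: I would show that in fact $P \le 1$. This is the key step. Looking at the proof of Lemma~\ref{one_fixed_edge_one_fixed_point}, $\ASum[1]^p(S;e) = +1$ happens precisely when $p$ is non-extreme, $\Delta pqr$ is empty, \emph{and} both the $q$-wedge and $r$-wedge relative to $p$ are empty of points of $S$ — equivalently, $p$ is ``visible'' from $e$ with nothing between. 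I expect that at most one point $p$ of $S$ can have this property with respect to a fixed edge $e$ (intuitively, the point of $S$ closest to $e$ in the strip-like region, or the point such that $\Delta pqr$ is empty and the wedges are empty — there can be only one). Making this ``at most one'' claim precise and rigorous is the main obstacle.

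For the \textbf{lower bound} $\ASum[1](S;e)\ge \max\{h,4\}-n$: I would bound $N \le n - \max\{h,4\}$. We have $N \le$ number of non-extreme points that can be interior to a polygon on the left of $e$, which is at most $n-h$; and also every boundary edge of $\mathrm{conv}(S)$ other than possibly $e$ itself contributes two extreme points, and together with $q,r$ these give at least $\min\{h, 4\}$... more carefully, since $q$ and $r$ are always among the zero-contributors and a convex hull on $n\ge 3$ points has at least $3$ vertices, hence (using that $qr$ may or may not be a hull edge) at least $4$ points of $S$ contribute $0$ whenever $n \ge 4$; for $n=3$ the statement is vacuous or direct. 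Thus $N \le n-4$ and $N\le n-h$, giving $N \le n-\max\{h,4\}$ and therefore $\ASum[1](S;e) \ge P - N \ge 0 - (n-\max\{h,4\}) = \max\{h,4\}-n$. I would assemble these two bounds, handling the small-$n$ cases ($n\le 4$) by inspection if needed, to complete the proof; the delicate part throughout is the combinatorial-geometric argument that $P\le 1$, for which I would argue by contradiction: two distinct points $p_1,p_2$ each seeing $e$ with empty wedges and empty triangles $\Delta p_iqr$ would force one to lie inside the other's triangle or wedge, contradicting emptiness.
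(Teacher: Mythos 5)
Your decomposition $\ASum[1](S;e)=\sum_{p\in S}\ASum[1]^p(S;e)$ and your treatment of the upper bound match the paper's proof: the paper likewise observes that $\ASum[1]^p(S;e)=1$ forces both the $q$-wedge and the $r$-wedge of $p$ to be empty of points of $S$, and asserts (without detail) that at most one point can have this property. Your proposed contradiction in fact fills in that assertion correctly: if $p_1\neq p_2$ both had $\Delta{p_iqr}$ and both wedges empty, then $p_2$, lying to the left of $e$ but in none of these regions of $p_1$, would have to lie in the $p_1$-wedge, which is equivalent to $p_1\in\Delta{p_2qr}$ and contradicts the emptiness of $\Delta{p_2qr}$. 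So the upper bound is sound.

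The genuine gap is in the lower bound, precisely in the case $h=3$ with $q$ and $r$ both extreme (so $e$ is an edge of the hull triangle). Your claim that ``at least $4$ points of $S$ contribute $0$ whenever $n\geq 4$'' is not justified there: the only zero-contributors your counting produces are the extreme points and the endpoints $q,r$, and in this case their union is just the three hull vertices, so you only get $N\leq n-3$ and hence $\ASum[1](S;e)\geq 3-n$, weaker than the claimed $4-n$. (Your remark about ``the two endpoints of any other boundary edge'' does not help, since those endpoints are again among the same three hull vertices.) The paper closes exactly this case with an additional geometric argument: let $p$ be the non-extreme point of $S$ at maximum distance from the line through $e$. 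Any convex polygon with edge $e$ containing $p$ in its interior must have a vertex farther from $e$ than $p$, and the only such point is the third hull vertex $s$; any further vertex would be a non-extreme point inside $\Delta{qrs}$ and would destroy convexity, so the only candidate polygon is the triangle $\Delta{qrs}$ itself, whence $\ASum[1]^p(S;e)\in\{0,1\}$. This supplies the needed fourth point with non-negative contribution and yields $N\leq n-4$. You need this (or an equivalent) argument; without it the case $h=3$ with $e$ a hull edge is not covered. The remaining cases of your lower bound ($h\geq 4$, or one of $q,r$ non-extreme) are fine and agree with the paper.
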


\begin{proof}
	For the upper bound, consider an arbitrary point $p\in S\setminus \{q,r\}$. 
	Reconsider the proof of Lemma~\ref{one_fixed_edge_one_fixed_point} and the $q$-wedge and the $r$-wedge. 
	Then, for $\ASum[1]^p(S;e)=1$ it is necessary that none of those wedges contains  points of $S$ in the interior. 
	As this can happen for at most one point of $S$ and as otherwise $\ASum[1]^p(S;e)\leq 0$, the upper bound follows.
	
	For the lower bound, note first that for any extreme point $p$ of $S$ we have $\ASum[1]^p(S;e)=0$. 
	Hence, $\sum_{p\in S} \ASum[1]^p(S;e) \geq (-1)\cdot(n-h)$, which for $h\geq 4$ is the claimed lower bound.
	Further, if $e$ is not a convex hull edge of $S$, then the points on the non-considered side of $e$ (there is at least one) can be ignored,
	again implying the claimed bound.
	So assume that $h=3$ and $e$ is a convex hull edge. Let $p$ be a non-extreme point of $S$ that has maximum distance to~$e$. 
	Then, the only convex polygon spanned by $S$ and having $e$ as an edge and that contains $p$ in its interior, is a triangle, since $S$ has $h=3$ extreme points. 
	This implies that $\ASum[1]^p(S;e)\in\{0,1\}$ and hence 
	$\sum_{p\in S} \ASum[1]^p(S;e) \geq (-1)\cdot(n-4)=4-n = \max\{4,h\}-n$.
\end{proof}
	
We remark that both bounds are tight in the sense that there exist arbitrary large point sets and edges obtaining them. 
For the lower bound, a quadrilateral with a concave chain of $n-4$ edges added close enough to one edge, and $e$ being the edge opposite to the chain gives an example; see Figure~\ref{fig:tight_bounds}. 
Actually, the latter also provides edges $f$ with $\ASum[1](S;f) = 1$; see Figure~\ref{fig:tight_bounds}. 
A different example for the upper bound is when the considered edge $g$ has a point $p$ sufficiently close to it. This 
guarantees that any non-empty polygon with $g$ as edge contains $p$, and hence $\ASum[1](S;g) = \ASum[0](S\setminus\{p\};g) = 1$. 

\begin{figure}[htb]
\centering
\includegraphics[page=2,scale=1]{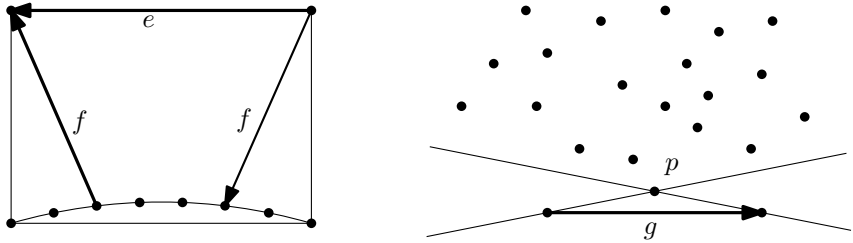} %
\caption{Examples reaching the lower and upper bounds of Theorem~\ref{one_fixed_edge_one_arb_point}: 
	$\ASum[1](S;e) = 4-n$ and $\ASum[1](S;f) = 1$ (left) and $\ASum[1](S;g) = 1$ (right).}
\label{fig:tight_bounds}
\end{figure}

\subsection{Inequalities}

Pinchasi et al.~\cite{prs_oecp_06} derived several inequalities that involve the parameters $X_{k,0}$. In this section, we show analogous inequalities for the parameters 
$X_{k,1}.$ We will need the following lemma proved in~\cite{prs_oecp_06}.

\begin{lemma}[\cite{prs_oecp_06}] \label{lm:m00_e_ineq}
For any set $S$ of $n$ points in general position and a directed edge $e$ spanned by two points of $S$, if there is at least one point of $S$ to the left of $e$, then it holds that 
\begin{itemize}
	\item for each $t \geq 3$ odd,
	$$
	X_{3,0}(S;e)-X_{4,0}(S;e)+X_{5,0}(S;e)-\ldots+X_{t,0}(S;e) \geq 1,
	$$
	\item for each $t \geq 4$ even,
	$$
	X_{3,0}(S;e)-X_{4,0}(S;e)+X_{5,0}(S;e)-\ldots-X_{t,0}(S;e) \leq 1,
	$$
	with equality holding, in either case, if and only if $X_{t+1,0}(S;e)=0$.
\end{itemize}
\end{lemma}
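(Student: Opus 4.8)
The statement to prove is the last displayed Lemma (attributed to Pinchasi et al.), which gives the partial-sum inequalities for $X_{k,0}(S;e)$. Since it is stated as a known result, I would reprove it in the spirit of the continuous-motion arguments used elsewhere in the paper, but actually the cleanest route is induction on $t$ together with a ``peeling'' recursion on the point set. Let me set up the approach.

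The plan is to fix a directed edge $e=qr$ with at least one point of $S$ to its left, and to argue by looking at the point $p$ of $S$ lying to the left of $e$ that is \emph{closest} to the line through $e$; equivalently, $\Delta pqr$ is an empty triangle incident to $e$. Every empty convex $k$-gon incident to the left side of $e$ either has $p$ as a vertex or does not. Those that do not have $p$ as a vertex are exactly the empty convex $k$-gons incident to $e$ in the set $S\setminus\{p\}$ that still avoid $p$; this is where I would want a clean recursive handle. A more robust alternative — and the one I would actually carry out — is to use Lemma~\ref{edgelemma}: we already know $\ASum[0](S;e)=X_{3,0}(S;e)-X_{4,0}(S;e)+\ldots = 1$ (the full alternating sum, no truncation). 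So the content of the lemma is entirely about the \emph{sign of the tail}: proving $\sum_{k\geq t+1}(-1)^{k+1}X_{k,0}(S;e)\geq 0$ for $t$ odd and $\le 0$ for $t$ even, with equality iff $X_{t+1,0}(S;e)=0$. Writing the inequalities this way makes the two bullet points a single statement about the tail alternating sum, and reduces clutter.

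The key steps, in order, would be: (1) Reduce to showing that for every $s\geq 3$, the tail $R_s(S;e):=\sum_{k\geq s}(-1)^{k+1}X_{k,0}(S;e)$ has sign $(-1)^{s+1}$ and is zero iff $X_{s,0}(S;e)=0$; the lemma follows by taking $s=t+1$ and combining with $\ASum[0](S;e)=1$. (2) Prove this tail claim by downward/upward induction on $s$ using a grouping argument: partition the empty convex polygons incident to $e$ with at least $s$ vertices according to their vertex set restricted to a cleverly chosen ``last'' vertex, exactly as in the proof of Lemma~\ref{one_fixed_edge_one_fixed_point}; each group contributes an alternating subsum which is itself of the form $\ASum[0](S';e')$ for a sub-configuration, hence equals $0$ or $1$ by Lemma~\ref{edgelemma}, so the tail is a sum of $0/1$ terms up to the sign $(-1)^{s+1}$, giving both the sign and the equality characterization. (3) Handle the base case $s$ large (all $X_{k,0}(S;e)=0$ for $k$ exceeding $|S|$) and assemble.

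The main obstacle will be step (2): making the grouping of polygons with $\ge s$ vertices precise so that each group's alternating contribution cleans up to a single application of Lemma~\ref{edgelemma}. The natural device is to sort the candidate ``far'' vertices around one endpoint of $e$ (or around the apex of the fan) and assign each polygon to its extreme vertex in that cyclic order — the same bookkeeping already used for the Claim inside Lemma~\ref{one_fixed_edge_one_fixed_point}. One must check that restricting to polygons with at least $s$ vertices does not break the telescoping: the group indexed by the last candidate vertex contributes a full, untruncated alternating sum only when $s=3$, and for larger $s$ one needs the inductive hypothesis on a strictly smaller configuration to control its truncated tail. I would therefore phrase the induction simultaneously over $s$ and over $|S|$, so that within a group the truncated tail is handled by the inductive hypothesis for a set with fewer points. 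The equality case then propagates cleanly: the tail vanishes precisely when every group contributes $0$, which by induction happens exactly when $X_{s,0}(S;e)=0$.
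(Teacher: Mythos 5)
First, a point of order: the paper does not prove this lemma at all --- it is imported from \cite{prs_oecp_06} (``the following lemma proved in~\cite{prs_oecp_06}''), so there is no in-paper argument to compare yours against; your proposal has to stand on its own. Its step (1) is sound and is the right first move: since $A_0(S;e)=1$ by Lemma~\ref{edgelemma}, the truncated inequalities are equivalent to the claim that the tail $R_s(S;e)=\sum_{k\ge s}(-1)^{k+1}X_{k,0}(S;e)$ has sign $(-1)^{s+1}$ and vanishes exactly when $X_{s,0}(S;e)=0$. (The easy implication ``$X_{s,0}(S;e)=0\Rightarrow R_s=0$'' follows because deleting a non-$e$ vertex of an empty convex polygon incident to $e$ yields another one, so the sequence $X_{k,0}(S;e)$ cannot return to a positive value after hitting $0$; you do not record even this.)

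The genuine gap is step (2), which you yourself flag as ``the main obstacle'' and then only gesture at. Two concrete missing pieces. (a) The grouping is never specified beyond ``a cleverly chosen last vertex.'' The device that makes it work is to assign each empty $k$-gon ($k\ge s$) incident to $e$ to the empty $s$-gon spanned by its first $s$ vertices in boundary order starting from $e$; then the members of a fixed group are exactly the extensions of a fixed $s$-gon $P$ beyond its last vertex, these correspond to empty convex polygons on a derived edge within the sub-configuration of admissible extension points, and their alternating sum is a \emph{full}, untruncated sum, so Lemma~\ref{edgelemma} applies directly and each group contributes $(-1)^{s+1}$ (if $P$ is non-extendable) or $0$ (otherwise). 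In particular, the simultaneous induction on $s$ and $|S|$ you propose is unnecessary once the grouping is by prefixes --- your stated worry about ``truncated tails inside a group'' is an artifact of not having pinned the grouping down. (b) The equality characterization does not ``propagate cleanly'' as asserted: the grouping yields $R_s=(-1)^{s+1}N_s$ with $N_s$ the number of non-extendable empty $s$-gons incident to $e$, and one must still show that $X_{s,0}(S;e)>0$ forces $N_s>0$. That requires a separate extremal argument (e.g.\ take the empty $s$-gon whose last vertex is angularly extremal as seen from $q$; an extension of it would produce, after dropping its first non-$e$ vertex, an empty $s$-gon that is strictly further along, a contradiction). Neither of these decisive steps appears in your write-up, so what you have is a plausible outline rather than a proof.
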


\begin{lemma}\label{lm:m01_p_ineq}
For any set $S$ of $n$ points in general position and a point $p \in S$ in the interior of the convex hull of $S$, it holds that 
\begin{itemize}
	\item for each $t \geq 3$ odd,
	$$
	X_{3,1}^p(S)-X_{4,1}^p(S)+\ldots+X_{t,1}^p(S) \geq 1,
		$$
	\item for each $t \geq 4$ even,
	$$
	X_{3,1}^p(S)-X_{4,1}^p(S)+\ldots-X_{t,1}^p(S) \leq 1,
		$$
	with equality holding, in either case, if and only if $X_{t+1,1}^p(S)=0$.
\end{itemize}
\end{lemma}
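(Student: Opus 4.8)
The plan is to mimic the structure used for Lemma~\ref{lem:one_fixed_point}: establish the statement for a point $p$ lying close to a convex hull edge, and then transport it to an arbitrary interior position of $p$ via a continuous motion argument, checking that the inequalities are preserved as $p$ crosses an edge spanned by two other points. The key observation connecting this to the already-known Lemma~\ref{lm:m00_e_ineq} is that every convex $k$-gon counted in $X_{k,1}^p(S)$ is an empty convex $k$-gon in $S\setminus\{p\}$, so the partial alternating sums $\sum_{k=3}^{t}(-1)^{k+1}X_{k,1}^p(S)$ are partial alternating sums of numbers of empty polygons in $S\setminus\{p\}$, restricted to those that happen to contain $p$.

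First I would handle the base case: move $p$ close enough to a convex hull edge $e=ab$ of $S$ so that the polygons containing $p$ are exactly the polygons incident to $e$ that are empty in $S\setminus\{p\}$; this is the same reduction as in Lemma~\ref{lem:one_fixed_point}. Then $X_{k,1}^p(S) = X_{k,0}(S\setminus\{p\};e)$ for every $k$, and since $p$ is interior, $e$ has at least one point of $(S\setminus\{p\})\setminus\{a,b\}$ to its left (indeed all of $S\setminus\{p,a,b\}$), so Lemma~\ref{lm:m00_e_ineq} applied to $S\setminus\{p\}$ and the directed edge $e$ gives precisely the claimed inequalities, including the equality condition $X_{t+1,1}^p(S)=X_{t+1,0}(S\setminus\{p\};e)=0$.

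Next I would run the continuous motion. Choose a path for $p$ from its given position to the near-hull-edge position that stays inside the convex hull of $S$ and avoids all vertices of the line arrangement of $S\setminus\{p\}$, so that combinatorial changes occur only when $p$ crosses a single edge $qr$ spanned by two points of $S\setminus\{p\}$. When $p$ crosses $qr$, the polygons whose membership of $p$ changes are exactly those having $qr$ as an edge: as $p$ passes from the left to the right side of the line through $qr$, it stops being inside the polygons incident to the left side of $qr$ and starts being inside those incident to the right side. Since $qr$ is not a hull edge (the path stays strictly inside the hull), both $\ASum[0](S\setminus\{p\};qr)$ and $\ASum[0](S\setminus\{p\};rq)$ equal $1$ by Lemma~\ref{edgelemma}; the subtle point — and the one I expect to be the main obstacle — is that we need this cancellation to hold not only for the full alternating sum but \emph{level by level at the threshold $t$}, i.e.\ the partial sums $\sum_{k=3}^{t}(-1)^{k+1}X_{k,1}^p(S)$ and the flags ``$X_{t+1,1}^p(S)=0$'' must be controlled across the crossing.

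To resolve this, I would argue that in fact $X_{k,1}^p(S)$ \emph{as a function of position of $p$ is constant} on the path for the purpose of the inequality, because the change at a crossing of $qr$ is governed entirely by Lemma~\ref{lm:m00_e_ineq} applied to the two directed versions of $qr$ and the corresponding point subsets. More carefully: let $\Delta_t^{\pm}$ denote the partial alternating sum up to $t$ just before and just after the crossing; their difference is $-\bigl(\text{partial alt.\ sum of polygons incident to } qr \text{ that } p \text{ leaves}\bigr) + \bigl(\text{partial alt.\ sum of polygons incident to } rq \text{ that } p \text{ enters}\bigr)$. Both of these partial sums satisfy the one-sided bounds of Lemma~\ref{lm:m00_e_ineq} (applied in $S\setminus\{p\}$ to the edge $qr$, resp.\ $rq$, each having points to its left since $qr$ is interior), so for $t$ odd each is $\geq 1$ and for $t$ even each is $\leq 1$, and they become simultaneously equal to the common value $1$ exactly when the respective next term vanishes. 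Since in the limiting near-hull position the inequality is tight iff $X_{t+1,1}^p=0$, a monotone-transport argument — or, cleaner, showing directly that each crossing replaces one valid instance of the Lemma~\ref{lm:m00_e_ineq} inequality by another — yields that the inequality, together with its equality characterization, holds for all positions of $p$, in particular the original one. I would close the proof by noting the equality case: $X_{t+1,1}^p(S)=0$ means no convex $(t+1)$-gon with $p$ as its unique interior point, which combined with the monotonicity of the partial sums (each further term after a vanishing one is forced to vanish, as an empty $(t+2)$-gon containing $p$ would contain an empty $(t+1)$-gon containing $p$ by removing an ear) gives the stated ``if and only if''.
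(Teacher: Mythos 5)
Your proposal follows the same route as the paper's proof: reduce the case where $p$ lies close to a convex hull edge $e$ to Lemma~\ref{lm:m00_e_ineq} via $X_{k,1}^p(S)=X_{k,0}(S\setminus\{p\};e)$, and then transport the statement to an arbitrary interior position of $p$ by a continuous motion argument. Your base case and the ``if'' direction of the equality characterization are correct and essentially identical to the paper's. You also put your finger on exactly the right crux: when $p$ crosses an interior edge $qr$, the \emph{full} alternating sum is preserved because $\ASum[0](S\setminus\{p\};qr)=\ASum[0](S\setminus\{p\};rq)=1$, but the sums truncated at level $t$ are only bounded by Lemma~\ref{lm:m00_e_ineq}, not determined.

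The resolution you sketch, however, does not close this gap. The change in the truncated sum at a crossing is $\sum_{k=3}^{t}(-1)^{k+1}X_{k,0}(S\setminus\{p\};rq)-\sum_{k=3}^{t}(-1)^{k+1}X_{k,0}(S\setminus\{p\};qr)$, and knowing that both of these quantities are $\geq 1$ (for $t$ odd) says nothing about the sign or magnitude of their difference; so ``$\geq 1$ before the crossing'' does not propagate to ``$\geq 1$ after the crossing.'' Indeed the truncated sum genuinely changes along the path: for $t=3$ it is simply the number of empty triangles of $S\setminus\{p\}$ containing $p$, which for five points in convex position plus one interior point equals $5$ when $p$ is near the center and $3$ when $p$ is near the midpoint of a hull edge. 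Consequently, ``each crossing replaces one valid instance of the inequality by another'' is not a consequence of the cited lemmas, the ``monotone-transport argument'' is never actually supplied, and the ``only if'' direction of the equality claim (strictness when $X_{t+1,1}^p(S)\neq 0$) is likewise left unproved away from the near-edge position. For what it is worth, the paper's own proof is no more rigorous at this exact step --- it flatly asserts that the truncated sum ``does not change'' under the motion, which the example above contradicts --- so you have reproduced the paper's argument faithfully, and were more candid about its weak point; but as written your proposal, like the paper's sketch, establishes the lemma only for $p$ close to a hull edge, and a correct treatment of the general position would need a genuinely different argument (e.g.\ a direct decomposition in the spirit of the proof of Lemma~\ref{lm:m00_e_ineq} itself).
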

\begin{proof}
First of all, by Lemma~\ref{lem:one_fixed_point}, we have that if $X_{t+1,1}^p(S)=0$, then equality holds.
Let us then consider that $p$ is close enough to a convex hull edge $e$ of~$S$. We can apply the same argument as in the proof of Lemma~\ref{lem:one_fixed_point}.  We get
\begin{equation*}
\left.\begin{aligned}
 &X_{3,1}^p(S)-X_{4,1}^p(S)+\ldots+(-1)^{t+1}X_{t,1}^p(S)\\  &= X_{3,0}(S \setminus \left\{p\right\}; e)-X_{4,0}(S \setminus \left\{p\right\}; e)+\ldots+(-1)^{t+1}X_{t,0}(S \setminus \left\{p\right\}; e),
\end{aligned}\right.
\end{equation*}
and the result of Lemma~\ref{lm:m00_e_ineq} applies.
Moreover, if we consider a continuous motion of $p$ which is sufficiently generic, proceeding exactly in the same manner as in the proof of Lemma~\ref{lem:one_fixed_point}, the value of $X_{3,1}^p(S)-X_{4,1}^p(S)+\ldots+(-1)^{t+1}X_{t,1}^p(S)$ does not change if $p$ is located arbitrarily inside $S$. 
\end{proof}

\begin{theorem}\label{thm:m01_ineq}
For any set $S$ of $n$ points in general position in the plane, $h$ of them on the boundary of the convex hull, we have
\begin{itemize}
	\item for each $t \geq 3$ odd,
	$$
	X_{3,1}(S)-X_{4,1}(S)+X_{5,1}(S)-\ldots+X_{t,1}(S) \geq n-h,
	$$
	\item for each $t \geq 4$ even,
	$$
	X_{3,1}(S)-X_{4,1}(S)+X_{5,1}(S)-\ldots-X_{t,1}(S) \leq n-h,
	$$
	with equality holding, in either case, if and only if $X_{t+1,1}(S)=0$.
\end{itemize}
\end{theorem}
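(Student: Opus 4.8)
The plan is to derive the statement by summing the per-point inequalities of Lemma~\ref{lm:m01_p_ineq} over all points of $S$, exactly as Theorem~\ref{thm:one_point} was obtained from Lemma~\ref{lem:one_fixed_point}. The key bookkeeping fact is the decomposition $X_{k,1}(S) = \sum_{p \in S} X_{k,1}^p(S)$, valid because every convex $k$-gon with exactly one interior point is counted precisely once on the right-hand side, namely for $p$ equal to its unique interior point. This gives, for every $t \geq 3$,
\[
\sum_{k=3}^t (-1)^{k+1} X_{k,1}(S) = \sum_{p \in S} \left( \sum_{k=3}^t (-1)^{k+1} X_{k,1}^p(S) \right).
\]

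Next I would split the outer sum according to whether $p$ is extreme or interior. If $p$ is an extreme point of $S$, then $p$ lies in no polygon spanned by $S$, so $X_{k,1}^p(S)=0$ for all $k$ and the corresponding inner sum is $0$; there are $h$ such terms. For each of the $n-h$ interior points $p$, Lemma~\ref{lm:m01_p_ineq} applies and yields $\sum_{k=3}^t (-1)^{k+1} X_{k,1}^p(S) \geq 1$ for odd $t$ and $\leq 1$ for even $t$. Summing these $n-h$ bounds gives $\sum_{k=3}^t (-1)^{k+1} X_{k,1}(S) \geq n-h$ for odd $t$ and $\leq n-h$ for even $t$, as claimed.

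For the equality characterization I would use the decomposition once more. By Lemma~\ref{lm:m01_p_ineq}, for each interior point $p$ the inner inequality is tight if and only if $X_{t+1,1}^p(S)=0$. Hence the summed inequality is tight if and only if $X_{t+1,1}^p(S)=0$ for every interior point $p$ (the extreme points contribute $0$ regardless), and by nonnegativity of the numbers $X_{t+1,1}^p(S)$ together with $X_{t+1,1}(S)=\sum_{p\in S} X_{t+1,1}^p(S)$, this is equivalent to $X_{t+1,1}(S)=0$.

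The proof is essentially a summation argument once Lemma~\ref{lm:m01_p_ineq} is available, so I do not expect a real obstacle; the only place needing slight care is the equality case, where one must note that a single interior point with $X_{t+1,1}^p(S)>0$ already forces strict inequality in the total, and that extreme points neither help nor hurt since all their contributions vanish.
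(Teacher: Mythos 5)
Your proposal is correct and follows essentially the same route as the paper: decompose $X_{k,1}(S)=\sum_{p\in S}X_{k,1}^p(S)$ by the unique interior point, apply Lemma~\ref{lm:m01_p_ineq} to each of the $n-h$ interior points (extreme points contributing zero), and read off the equality characterization from the lemma's ``if and only if'' clause together with nonnegativity of the $X_{t+1,1}^p(S)$. The only cosmetic difference is that the paper invokes Theorem~\ref{thm:one_point} for the direction ``$X_{t+1,1}(S)=0$ implies equality,'' whereas you obtain both directions from the per-point decomposition; both are valid.
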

\begin{proof}
Observe that, by Theorem~\ref{thm:one_point}, if $X_{t+1,1}=0$ then the equality holds.
Using the same argument as in Theorem \ref{thm:one_point}, it is clear that
\begin{equation*}
\left.\begin{aligned}
 &X_{3,1}(S)-X_{4,1}(S)+X_{5,1}(S)-\ldots+(-1)^{t+1}X_{t,1}(S)\\  
& = \sum_{p \in S} \left(X_{3,1}^p(S)-X_{4,1}^p(S)+X_{5,1}^p(S)-\ldots+(-1)^{t+1}X_{t,1}^p(S)\right).
\end{aligned}\right.
\end{equation*}
Let $t$ be odd. By Lemma \ref{lm:m01_p_ineq}, $X_{3,1}^p(S)-X_{4,1}^p(S)+\ldots+X_{t,1}^p(S) \geq 1$ if $p$ is an interior point of $S$. Therefore,
\[
X_{3,1}(S)-X_{4,1}(S)+X_{5,1}(S)-\ldots+X_{t,1}(S)  =  \sum_{p \in S} \left(X_{3,1}^p(S)-X_{4,1}^p(S)+\ldots+X_{t,1}^p(S)\right) \geq n - h.
\]
If equality holds, then $X_{3,1}^p(S)-X_{4,1}^p(S)+X_{5,1}^p(S)-\ldots+X_{t,1}^p(S)=1$ if $p$ is an interior point of~$S$ and, by Lemma \ref{lm:m01_p_ineq}, $X_{t+1,1}^p(S)=0$ for these points. Therefore, $X_{t+1,1}^p(S)=0$ for all the points and, in consequence, $X_{t+1,1}(S)=0$.

If $t$ is even, the proof is analogous with the unique difference that the inequality is $X_{3,1}^p(S)-X_{4,1}^p(S)+X_{5,1}^p(S)-\ldots+(-1)^{t+1}X_{t,1}^p(S) \leq 1$ if $p$ is an interior point of $S$. Therefore, the proof proceeds in the same manner but the direction of the inequalities is reversed.
\end{proof}

From Theorem \ref{thm:m01_ineq} with $t=4$ we obtain the following corollary.
\begin{corollary}
$X_{4,1} \geq X_{3,1} -n + h$.
\end{corollary}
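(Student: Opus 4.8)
The plan is simply to instantiate Theorem~\ref{thm:m01_ineq} at the smallest even value $t = 4$ and rearrange. For $t = 4$ the theorem's second bullet states that
\[
	X_{3,1}(S) - X_{4,1}(S) \leq n - h,
\]
and moving $X_{4,1}(S)$ to the right-hand side and $n-h$ to the left-hand side yields $X_{4,1}(S) \geq X_{3,1}(S) - n + h$, which is the claimed inequality.

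Since Theorem~\ref{thm:m01_ineq} is already proved in the excerpt, there is no genuine obstacle here; the only thing to check is that $t = 4$ is a legitimate (even, $\geq 4$) choice in the statement of the theorem, which it is. One might optionally remark that the inequality is tight exactly when $X_{5,1}(S) = 0$, inheriting the equality condition from the theorem, though the corollary as stated does not require this. No continuous-motion argument or new combinatorial analysis is needed at this step — all the work was done in Lemmas~\ref{lm:m00_e_ineq} and~\ref{lm:m01_p_ineq} and in the summation over interior points $p \in S$ carried out in the proof of Theorem~\ref{thm:m01_ineq}.

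Thus I would present the corollary's proof as a single sentence: apply Theorem~\ref{thm:m01_ineq} with $t=4$ and rearrange.
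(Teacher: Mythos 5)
Your proposal is correct and matches the paper exactly: the corollary is stated immediately after Theorem~\ref{thm:m01_ineq} with the one-line justification that it follows from the case $t=4$, which is precisely the instantiation and rearrangement you give. Nothing further is needed.
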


\section{Weighted sums}\label{sec:weighted_sums} %

In this section, we consider sums of the form $\FSum(S) = \sum_{k\geq 3} \sum_{\ell\geq 0} \fkl{k}{\ell} \Xkl(S)$. 
The following theorem is the main tool used throughout this section and the next one.

\begin{theorem}\label{thm:fkl}
	For any function $\fkl{k}{\ell}$ that fulfills the equation
	\begin{equation} \label{equ:fkl}
		\fkl{k}{\ell} = \fkl{k+1}{\ell-1} + \fkl{k}{\ell-1},
	\end{equation}
	the sum $\FSum(S) := \sum_{k\geq 3} \sum_{\ell\geq 0} \fkl{k}{\ell} \Xkl(S)$
	is invariant over all sets $S$ of $n$ points in general position, %
	that is, $\FSum(S)$ only depends on the cardinality of $S$.
\end{theorem}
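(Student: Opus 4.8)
The plan is to use the continuous motion argument described in the introduction. First I would establish the base case: for a set $S$ of $n$ points in convex position, we have $\Xkl(S) = \binom{n}{k}$ for $\ell = 0$ and $\Xkl(S) = 0$ for $\ell \geq 1$, so $\FSum(S) = \sum_{k\geq 3} \fkl{k}{0}\binom{n}{k}$, a quantity depending only on $n$ (and on $f$, but $f$ is fixed). This shows $\FSum$ takes the claimed ``invariant'' value on at least one point set of each cardinality.

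Next I would set up the motion: given any two point sets $S$ and $S'$ of $n$ points in general position, move the points of $S$ one at a time along generic paths until $S'$ is reached, so that the only combinatorial changes occur when a single point $p$ crosses an edge $qr$ spanned by two other points. It then suffices to show that $\FSum$ is unchanged by such a crossing. When $p$ passes from one side of the line through $q$ and $r$ to the other, the convex polygons affected are exactly those spanned by $\{q,r,p\}$ together with some other subset of points, i.e., those having $\Delta{pqr}$ as a sub-triangle of their convex hull (or more precisely, those for which the incidence of $p$ relative to the polygon's interior, or the convexity/vertex status of $q,r,p$, changes). The crux is to pair up, or otherwise account for, the polygons that gain/lose an interior point or change their vertex count against the weight recurrence~\eqref{equ:fkl}.

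The key computation: fix the triangle $\Delta{pqr}$ and consider, just before the crossing, a convex polygon $P$ that has $q$ and $r$ as vertices and $p$ in its interior, lying (say) in the half-plane through $qr$ on the far side from where $p$ will move. Just after $p$ crosses $qr$, in the configuration with $p$ on the $q,r$-side, this same vertex set spans instead a convex polygon with one more vertex (namely $p$) and one fewer interior point. So a polygon counted in $\Xkl$ before the crossing is counted in $\Xkl[k+1,\ell-1]$ after — and this is a bijection between the two families (for each fixed $k,\ell$, with $\ell\ge 1$). Symmetrically, polygons on the other side that have $p$ as a vertex (contributing to $\Xkl[k,\ell]$) become polygons with $p$ interior, contributing to $\Xkl[k-1,\ell+1]$; relabeling, the families $\Xkl$ and $\Xkl[k+1,\ell-1]$ swap roles on the two sides. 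Hence the net change in $\FSum$ is, up to sign, $\sum (\fkl{k}{\ell} - \fkl{k+1}{\ell-1} - \fkl{k}{\ell-1})$ times the number of such polygons, which is $0$ by~\eqref{equ:fkl}. I should also check that polygons not containing $\Delta{pqr}$ as such a sub-triangle — e.g., polygons where $p$, $q$, $r$ are not all involved, or where $p$ is a vertex and the polygon stays on the same side — are genuinely unaffected, and handle the boundary case $\ell = 0$ (where $\fkl{k}{-1}$ would appear): here $\fkl{k}{0}$ should match $\fkl{k+1}{-1} + \fkl{k}{-1}$ only in the sense that no $\ell = 0$ polygon actually changes its count type in a way that requires $\fkl{\cdot}{-1}$, because a polygon losing its last interior point ($\ell: 1 \to 0$) is already covered by the $\ell = 1$ case of the recurrence.

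The main obstacle I expect is the careful bookkeeping of exactly which polygons change and in what way during a single crossing — in particular, verifying that the correspondence ``add $p$ as a vertex'' $\leftrightarrow$ ``remove $p$ from the interior'' is a clean bijection respecting the parameters $k \mapsto k+1$, $\ell \mapsto \ell - 1$, and that there is no double counting between the two sides of the line $qr$ or with polygons touching $q$ or $r$ but not $p$. Once that local correspondence is pinned down, the algebraic cancellation is immediate from~\eqref{equ:fkl}, and combined with the base case and the connectivity of configuration space under point moves, the theorem follows.
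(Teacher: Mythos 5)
Your overall strategy is exactly the paper's: a continuous motion argument in which the only combinatorial events are a point crossing an edge, combined with a local cancellation forced by the recurrence. However, the central bookkeeping step is incorrect as you state it, and it is precisely the step you flag as the ``main obstacle.'' You describe the local change as a bijection sending a polygon counted in $\Xkl$ (a $k$-gon $Q$ with edge $qr$ and $p$ among its $\ell$ interior points) to a polygon counted in $\Xkl[k+1,\ell-1]$ after $p$ crosses $qr$. If that were the whole story, the net change in $\FSum$ would be $\fkl{k+1}{\ell-1}-\fkl{k}{\ell}$ per affected polygon, and invariance would require $\fkl{k}{\ell}=\fkl{k+1}{\ell-1}$ --- which is not the hypothesis. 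What actually happens is that each such $Q$ gives rise to \emph{two} polygons after the crossing: $Q$ itself does not disappear but persists with the same vertex set, now counted in $\Xkl[k,\ell-1]$ (it merely lost $p$ from its interior), \emph{and} in addition the $(k+1)$-gon $Q'$ obtained by replacing the edge $qr$ of $Q$ by the path $qpr$ becomes convex with $\ell-1$ interior points and is counted in $\Xkl[k+1,\ell-1]$. The per-polygon change is therefore $-\fkl{k}{\ell}+\fkl{k}{\ell-1}+\fkl{k+1}{\ell-1}=0$, which is exactly Equation~(\ref{equ:fkl}). You do write this three-term expression at the very end, but it does not follow from the two-term correspondence you actually set up; the missing term $\fkl{k}{\ell-1}$ is the whole point of the recurrence having two summands.

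A secondary issue: the claim that the families ``$\Xkl$ and $\Xkl[k+1,\ell-1]$ swap roles on the two sides'' suggests a pairing of polygons across the line through $qr$. No such pairing exists or is needed: the polygons on the side $p$ leaves and on the side $p$ enters are disjoint families, and each is balanced separately by the same three-term accounting (run forwards on one side and backwards on the other). With the local count corrected, the rest of your argument is sound --- the genericity of the motion, the observation that the $\ell=0$ case never invokes $\fkl{k}{-1}$, and the evaluation on convex position (which, note, is only needed for the corollaries; the theorem itself asserts only invariance, which the motion argument alone delivers).
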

\begin{proof}
Consider a point set $S$ in general position. We claim that any continuous motion of the points of $S$ which is sufficiently generic does not change the value of $F(S)$. 

Consider that $p,q,r \in S$ become collinear, with $r$ lying between $p$ and $q$; thus the only convex polygons spanned by $S$ that may change are those that have $pq$ as an edge and $r$ in its interior or those that have $p,q,$ and $r$ as vertices. 

Let $Q$ be a convex $k$-gon with $\ell$ interior points that contains $pq$ as an edge and $r$ in its interior. If $r$ moves outside of $Q$, then $Q$ has $\ell-1$ points in its interior and the $(k+1)$-gon~$Q'$ obtained by replacing the edge $pq$ of $Q$ by the polygonal path $prq$, starts being convex with $\ell-1$ points in its interior. Hence, in this movement we can assign each polygon counted in $X_{k,\ell}$ (which disappears) to one polygon counted in $X_{k,\ell-1}$ and to one counted in $X_{k+1,\ell-1}$ (which appear). Since $f(k,\ell)=f(k+1,\ell-1)+f(k,\ell-1)$, the movement of the point does not change the value of $F(S)$.

Symmetrically, if $r$ moves inside $Q$ (with $\ell$ points in its interior), then $Q$ has $\ell+1$ points in its interior and the $(k+1)-$gon $Q'$, with also $\ell$ points in its interior, stops being convex. Again, this does not change the value of $F(S)$.
\end{proof}

\begin{observation}\label{obs:lincomb}
	Let $f_1(k, \ell)$ and $f_2(k, \ell)$ be two functions that fulfill Equation~(\ref{equ:fkl}). Then every linear combination of $f_1(k,\ell)$ and $f_2(k,\ell)$ fulfills Equation~(\ref{equ:fkl}) as well.
\end{observation}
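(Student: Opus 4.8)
The plan is to observe that Equation~(\ref{equ:fkl}) is \emph{linear and homogeneous} in the unknown function~$f$: writing it as $f(k,\ell) - f(k+1,\ell-1) - f(k,\ell-1) = 0$, the left-hand side is a linear operator applied to~$f$. Hence the set of functions satisfying it is a linear subspace of the space of all functions on the relevant index set, and closure under linear combinations is immediate. Concretely, I would fix scalars $\alpha,\beta\in\R$, set $g(k,\ell) := \alpha\, f_1(k,\ell) + \beta\, f_2(k,\ell)$, and verify Equation~(\ref{equ:fkl}) for $g$ by direct substitution, using that $f_1$ and $f_2$ each satisfy it.

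The one computation that carries the argument is
\begin{align*}
g(k,\ell) &= \alpha\, f_1(k,\ell) + \beta\, f_2(k,\ell) \\
&= \alpha\bigl(f_1(k+1,\ell-1)+f_1(k,\ell-1)\bigr) + \beta\bigl(f_2(k+1,\ell-1)+f_2(k,\ell-1)\bigr) \\
&= \bigl(\alpha\, f_1(k+1,\ell-1)+\beta\, f_2(k+1,\ell-1)\bigr) + \bigl(\alpha\, f_1(k,\ell-1)+\beta\, f_2(k,\ell-1)\bigr) \\
&= g(k+1,\ell-1) + g(k,\ell-1),
\end{align*}
where the second equality applies Equation~(\ref{equ:fkl}) to each of $f_1$ and $f_2$, the third merely regroups the four terms, and the last uses the definition of~$g$ with the shifted indices.

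I do not expect any real obstacle here; the statement is a formal consequence of the shape of the recurrence. The only thing to be careful about is the index bookkeeping — the shift $(k,\ell)\mapsto(k+1,\ell-1)$ and $(k,\ell)\mapsto(k,\ell-1)$ must be applied to \emph{both} summands $\alpha f_1$ and $\beta f_2$ consistently before regrouping. Notably, nothing about point sets, about the geometric meaning of the numbers $X_{k,\ell}(S)$, or about the specific weight functions constructed later in this section is needed: the observation rests solely on linearity, which is exactly what makes it convenient to combine the individual identities obtained from Theorem~\ref{thm:fkl}.
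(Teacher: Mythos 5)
Your proof is correct, and it is exactly the argument the paper has in mind — the paper in fact states this as an observation with no written proof, precisely because the recurrence in Equation~(\ref{equ:fkl}) is linear and homogeneous in $f$, so closure under linear combinations is immediate. Your explicit substitution-and-regrouping computation fills in that omitted routine verification correctly.
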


Using Theorem~\ref{thm:fkl}, we first derive several relations for the sum over all convex polygons, weighted by their number of interior points. Each of this relations is proved by showing that the function $f(k,\ell)$ satisfies~Equation~(\ref{equ:fkl}) and by evaluating $F(S)$ for a set of $n$ points in convex position. Note that for point sets in convex position $X_{k,\ell}=0$ for $\ell\geq 1$.

\begin{corollary}\label{cor:is}
	For any set $S$ of $n\geq 3$ points in general position, it holds that  
	\[  \sum_{k=3}^{n} \sum_{\ell=0}^{n-3} 2^\ell \Xkl(S) = 2^n - \frac{n^2}{2} - \frac{n}{2} - 1. \]
\end{corollary}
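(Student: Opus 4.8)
The plan is to invoke Theorem~\ref{thm:fkl} with the weight function $f(k,\ell) := 2^\ell$. The first step is to check that this $f$ satisfies the recurrence~(\ref{equ:fkl}): we have $f(k+1,\ell-1) + f(k,\ell-1) = 2^{\ell-1} + 2^{\ell-1} = 2^\ell = f(k,\ell)$, so the hypothesis of Theorem~\ref{thm:fkl} is met and $F(S) = \sum_{k\geq 3}\sum_{\ell\geq 0} 2^\ell X_{k,\ell}(S)$ is invariant over all $n$-point sets in general position; it depends only on $n = |S|$.

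The second step is to evaluate this invariant on the most convenient configuration, namely $n$ points in convex position. For such a set every convex polygon is empty, so $X_{k,\ell}(S) = 0$ for all $\ell \geq 1$, while every $k$-subset of $S$ spans an empty convex $k$-gon, giving $X_{k,0}(S) = \binom{n}{k}$ for each $k \geq 3$. Hence
\[
F(S) = \sum_{k=3}^{n} \binom{n}{k} = 2^n - \binom{n}{0} - \binom{n}{1} - \binom{n}{2} = 2^n - 1 - n - \frac{n(n-1)}{2} = 2^n - \frac{n^2}{2} - \frac{n}{2} - 1.
\]

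Finally, for an arbitrary $n$-point set in general position the only indices for which $X_{k,\ell}(S)$ can be nonzero satisfy $3 \leq k \leq n$ and $0 \leq \ell \leq n-k \leq n-3$, so the double sum $\sum_{k=3}^{n}\sum_{\ell=0}^{n-3} 2^\ell X_{k,\ell}(S)$ appearing in the statement is precisely $F(S)$, which by the computation above equals $2^n - \frac{n^2}{2} - \frac{n}{2} - 1$. This completes the argument. There is no real obstacle here: all the work is already done in Theorem~\ref{thm:fkl}, and the only points requiring a bit of care are recognizing that $X_{k,0} = \binom{n}{k}$ for points in convex position and checking that the truncation of the summation ranges in the corollary loses no nonzero terms.
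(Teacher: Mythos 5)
Your proposal is correct and follows exactly the paper's own argument: verify that $f(k,\ell)=2^\ell$ satisfies the recurrence of Theorem~\ref{thm:fkl}, then evaluate the invariant sum on a set of $n$ points in convex position, where $X_{k,0}=\binom{n}{k}$ and $X_{k,\ell}=0$ for $\ell\geq 1$. The added remark about the truncated summation ranges is a harmless bit of extra care not spelled out in the paper.
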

\begin{proof}
	Let $\fkl{k}{\ell} = 2^\ell$. 
	Then $2^\ell = 2^{\ell-1} +  2^{\ell-1} = \fkl{k+1}{\ell-1} + \fkl{k}{\ell-1}$.
	For a set $S$ of $n$ points in convex position we get
	$\sum_{k=3}^n 2^0 \Xkl[k,0] = \sum_{k=3}^n \binom{n}{k} = 2^n - \binom{n}{2} - \binom{n}{1} - \binom{n}{0} = 2^n - \frac{n^2}{2} - \frac{n}{2} - 1$.
\end{proof}

\begin{corollary}\label{cor:bms}
	For any set $S$ of $n$ points in general position, and every integer $3 \leq m \leq n$ it holds that  
	\[ \sum_{k=3}^{m} \sum_{\ell=m-k}^{n-k} \binom{\ell}{m-k} \Xkl(S) = \binom{n}{m}. \]
\end{corollary}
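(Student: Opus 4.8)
The plan is to invoke Theorem~\ref{thm:fkl} with the weight function $f(k,\ell) := \binom{\ell}{m-k}$, using throughout the standard convention that $\binom{a}{b}=0$ whenever $b<0$ or $b>a$. Under this convention $f(k,\ell)=0$ for every $k>m$ (since then $m-k<0$) and for every $k\le m$ with $\ell<m-k$; moreover $\Xkl(S)=0$ as soon as $k+\ell>n$. Hence the doubly-infinite sum $\FSum(S)=\sum_{k\ge 3}\sum_{\ell\ge 0} f(k,\ell)\,\Xkl(S)$ of Theorem~\ref{thm:fkl} collapses to exactly $\sum_{k=3}^{m}\sum_{\ell=m-k}^{n-k}\binom{\ell}{m-k}\Xkl(S)$, which is the left-hand side of the asserted identity.

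First I would verify that $f$ satisfies Equation~(\ref{equ:fkl}), i.e.\ that $\binom{\ell}{m-k}=\binom{\ell-1}{m-k-1}+\binom{\ell-1}{m-k}$ for all $k\ge 3$ and $\ell\ge 1$. Setting $j=m-k$, this is just Pascal's rule $\binom{\ell}{j}=\binom{\ell-1}{j-1}+\binom{\ell-1}{j}$; the only thing to check is that it still holds, under the convention above, in the degenerate ranges $j\le 0$ and $j\ge \ell$, which is a short case distinction (in each of these cases both sides are $0$, or both are $1$). With Equation~(\ref{equ:fkl}) confirmed, Theorem~\ref{thm:fkl} guarantees that $\FSum(S)$ depends only on $n$.

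It then remains to evaluate $\FSum(S)$ on one convenient configuration, for which I would take $S$ in convex position. There $\Xkl(S)=0$ for $\ell\ge 1$ and $\Xkl[k,0](S)=\binom{n}{k}$, so $\FSum(S)=\sum_{k\ge 3}\binom{0}{m-k}\binom{n}{k}$. Since $\binom{0}{m-k}$ equals $1$ when $k=m$ and $0$ otherwise, and since $3\le m\le n$, this sum equals $\binom{n}{m}$, which proves the corollary.

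I do not anticipate a genuine obstacle here: the argument reduces to recognizing $f(k,\ell)=\binom{\ell}{m-k}$ as a solution of the recurrence~(\ref{equ:fkl}) and then reading off its value in convex position. The only points needing a little care are the bookkeeping of the summation ranges in the first paragraph and the verification of Pascal's rule at the boundary of its range; both are routine. (As an aside, summing these identities over $m=3,\dots,n$ recovers Corollary~\ref{cor:is}, since $\sum_{m=3}^{n}\binom{\ell}{m-k}=2^{\ell}$ for $k\ge 3$ and $0\le\ell\le n-k$, while $\sum_{m=3}^{n}\binom{n}{m}=2^n-\tfrac{n^2}{2}-\tfrac{n}{2}-1$.)
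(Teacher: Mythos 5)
Your proposal is correct and follows essentially the same route as the paper: it applies Theorem~\ref{thm:fkl} to the weight $f(k,\ell)=\binom{\ell}{m-k}$, checks Equation~(\ref{equ:fkl}) via Pascal's rule, and evaluates the invariant sum on a convex-position set to obtain $X_{m,0}(S)=\binom{n}{m}$. The extra care you take with the degenerate cases of Pascal's rule and the summation ranges is sound but only makes explicit what the paper leaves implicit.
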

\begin{proof}
	Let $\fkl{k}{\ell} =  \binom{\ell}{m-k}$ and note that $m$ is fixed. 
	Then 
	\[ \fkl{k}{\ell} = \binom{\ell}{m-k} = \binom{\ell-1}{m-k-1} + \binom{\ell-1}{m-k} = \fkl{k+1}{\ell-1} + \fkl{k}{\ell-1}.\] 
	For a set $S$ of $n$ points in convex position we get
		$ \sum_{k=3}^{m} \binom{0}{m-k} \Xkl[k,0](S) = \Xkl[m,0](S) = \binom{n}{m}$.
\end{proof}

\noindent
Let $\{\Fib(n)\}_{n \in \mathbb{Z}}$ be the sequence of Fibonacci numbers, satisfying the recurrence relation \mbox{$\Fib(n)=\Fib(n-1)+\Fib(n-2)$}, with $\Fib(0)=0$ and $\Fib(1)=1$. 
\begin{corollary}\label{cor:fib1}
	For any set $S$ of $n\geq 3$ points in general position it holds that
	\[\sum_{k=3}^{n} \sum_{\ell=0}^{n-3} \Fib(k+2\ell) X_{k,\ell}(S) =\Fib(2n)-n-\binom{n}{2}.\]
\end{corollary}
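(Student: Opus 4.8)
The plan is to apply Theorem~\ref{thm:fkl} with the weight function $\fkl{k}{\ell} = \Fib(k+2\ell)$, exactly in the style of Corollaries~\ref{cor:is} and~\ref{cor:bms}. First I would verify that this function satisfies Equation~(\ref{equ:fkl}): one needs
$\Fib(k+2\ell) = \Fib\big((k{+}1)+2(\ell{-}1)\big) + \Fib\big(k+2(\ell{-}1)\big) = \Fib(k+2\ell-1) + \Fib(k+2\ell-2)$,
which is nothing but the defining recurrence of the Fibonacci numbers, valid for every index (in particular for all $k\geq 3$, $\ell\geq 1$, where $k+2\ell\geq 5$). Hence, by Theorem~\ref{thm:fkl}, the sum $\FSum(S) = \sum_{k\geq 3}\sum_{\ell\geq 0} \Fib(k+2\ell)\,\Xkl(S)$ depends only on $n=|S|$.

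To pin down the value, I would evaluate $\FSum(S)$ on a set $S$ of $n$ points in convex position, where $\Xkl(S)=0$ for all $\ell\geq 1$ and $\Xkl[k,0](S)=\binom{n}{k}$. This gives $\FSum(S) = \sum_{k=3}^{n} \Fib(k)\binom{n}{k}$, so it remains to show $\sum_{k=3}^{n} \Fib(k)\binom{n}{k} = \Fib(2n) - n - \binom{n}{2}$.

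The one genuinely non-geometric step is therefore the classical identity $\sum_{k=0}^{n} \binom{n}{k}\Fib(k) = \Fib(2n)$, which I would prove via Binet's formula $\Fib(k) = (\phi^k-\psi^k)/\sqrt{5}$ with $\phi = (1+\sqrt{5})/2$ and $\psi = (1-\sqrt{5})/2$: by the binomial theorem, $\sum_{k=0}^{n}\binom{n}{k}\Fib(k) = \frac{1}{\sqrt{5}}\big((1+\phi)^n - (1+\psi)^n\big)$, and since $\phi$ and $\psi$ are the roots of $x^2 = x+1$ we have $1+\phi = \phi^2$ and $1+\psi = \psi^2$, whence this equals $\frac{\phi^{2n}-\psi^{2n}}{\sqrt{5}} = \Fib(2n)$. (A short induction on $n$ using the Fibonacci recurrence, or a combinatorial tiling argument, would work equally well.) Subtracting the three low-order terms $\binom{n}{0}\Fib(0)=0$, $\binom{n}{1}\Fib(1)=n$, and $\binom{n}{2}\Fib(2)=\binom{n}{2}$ yields $\sum_{k=3}^{n}\Fib(k)\binom{n}{k} = \Fib(2n) - n - \binom{n}{2}$, which by the invariance established above is the value of $\FSum(S)$ for every $n$-point set in general position. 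I do not anticipate any real obstacle: the combinatorial-geometric content is fully absorbed by Theorem~\ref{thm:fkl}, and the only thing to recall is the Fibonacci summation identity.
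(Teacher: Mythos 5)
Your proposal is correct and follows essentially the same route as the paper: verify that $\fkl{k}{\ell}=\Fib(k+2\ell)$ satisfies Equation~(\ref{equ:fkl}) via the Fibonacci recurrence, invoke Theorem~\ref{thm:fkl}, and evaluate on a convex-position set using $\sum_{k=0}^{n}\binom{n}{k}\Fib(k)=\Fib(2n)$. The only difference is that you prove this last identity via Binet's formula, whereas the paper simply cites it; everything else, including the subtraction of the terms for $k=0,1,2$, matches.
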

\begin{proof}
The function $f(k,\ell)=\Fib(k+2\ell)$ satisfies~Equation~(\ref{equ:fkl}) by definition of the Fibonacci numbers.
Consider then a set $S$ of $n$ points in convex position and use the following identity; see~\cite{spivey}. 
	\[\sum_{k=0}^{n}\Fib(k)\binom{n}{k}=\Fib(2n).\]
Then,
	\[\sum_{k=3}^{n}  \Fib(k+0) X_{k,0}(S) = \sum_{k=3}^{n} \Fib(k) \binom{n}{k}  = \Fib(2n)-n-\binom{n}{2}.\]
\end{proof}

\begin{corollary}\label{cor:fib2}
	For any set $S$ of $n\geq 3$ points in general position it holds that
	\[\sum_{k=3}^{n} \sum_{\ell=0}^{n-3} (-1)^{k+\ell}\Fib(k-\ell) X_{k,\ell}(S) =-\Fib(n)+n-\binom{n}{2}.\]
\end{corollary}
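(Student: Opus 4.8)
The plan is to apply Theorem~\ref{thm:fkl} with the weight function $f(k,\ell) = (-1)^{k+\ell}\Fib(k-\ell)$, exactly mirroring the proof of Corollary~\ref{cor:fib1}. First I would check that this $f$ satisfies the recurrence~(\ref{equ:fkl}). Writing out $f(k+1,\ell-1) + f(k,\ell-1)$, both terms carry the sign $(-1)^{(k+1)+(\ell-1)} = (-1)^{k+(\ell-1)} = (-1)^{k+\ell-1}$ — note the shifted arguments in the first term give $(k+1)-(\ell-1) = k-\ell+2$ while the second gives $k-(\ell-1) = k-\ell+1$, so that
\[
 f(k+1,\ell-1) + f(k,\ell-1) = (-1)^{k+\ell-1}\bigl(\Fib(k-\ell+2) + \Fib(k-\ell+1)\bigr) = (-1)^{k+\ell-1}\Fib(k-\ell+3).
\]
Hmm, that does not immediately match $f(k,\ell) = (-1)^{k+\ell}\Fib(k-\ell)$, so the verification needs the Fibonacci recurrence run ``backwards'': $\Fib(m) = \Fib(m+2) - \Fib(m+1)$, hence $\Fib(k-\ell+3) = \Fib(k-\ell+2)+\Fib(k-\ell+1)$ is the wrong direction; instead I should re-derive using $\Fib(n) = \Fib(n-1)+\Fib(n-2)$ applied so that the indices $k-\ell+2$ and $k-\ell+1$ are the two \emph{smaller} indices summing to $k-\ell+3$, and then relate $(-1)^{k+\ell-1}\Fib(k-\ell+3)$ back to $(-1)^{k+\ell}\Fib(k-\ell)$ via the identity $\Fib(-m) = (-1)^{m+1}\Fib(m)$ if negative indices intervene, or more cleanly just verify the recurrence directly on $g(k,\ell) := (-1)^{k+\ell}\Fib(k-\ell)$ by substituting into~(\ref{equ:fkl}) and simplifying the signs and Fibonacci recurrence together. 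The key cancellation is that the alternating sign absorbs the index shift; this is the one genuinely fiddly step and the main place to be careful.

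Once the recurrence is confirmed, Theorem~\ref{thm:fkl} tells us $F(S)$ depends only on $n$, so I evaluate it on a set $S$ of $n$ points in convex position, where $X_{k,\ell}(S) = 0$ for all $\ell \geq 1$ and $X_{k,0}(S) = \binom{n}{k}$. Thus
\[
 F(S) = \sum_{k=3}^{n} (-1)^{k+0}\Fib(k-0)\binom{n}{k} = \sum_{k=3}^{n} (-1)^k \Fib(k)\binom{n}{k}.
\]
I then need a closed form for $\sum_{k=0}^{n} (-1)^k \Fib(k)\binom{n}{k}$. The analogue of the Spivey identity $\sum_k \Fib(k)\binom{n}{k} = \Fib(2n)$ for the alternating version is $\sum_{k=0}^n (-1)^k \Fib(k)\binom{n}{k} = -\Fib(n)$; this follows from the Binet formula $\Fib(k) = (\varphi^k - \psi^k)/\sqrt5$ together with the binomial theorem, using $1-\varphi = -\psi = \psi^{-1}\cdot(-1)$ type relations, or can simply be cited from a Fibonacci identities reference as is done for Corollary~\ref{cor:fib1}.

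Finally I subtract the $k=0,1,2$ terms that were included in the full binomial sum but are absent from the sum starting at $k=3$: these are $(-1)^0\Fib(0)\binom{n}{0} = 0$, $(-1)^1\Fib(1)\binom{n}{1} = -n$, and $(-1)^2\Fib(2)\binom{n}{2} = \binom{n}{2}$. Hence
\[
 \sum_{k=3}^{n} (-1)^k\Fib(k)\binom{n}{k} = -\Fib(n) - (0 - n + \tbinom{n}{2}) = -\Fib(n) + n - \binom{n}{2},
\]
which is exactly the claimed value. I expect the sign-tracking in the recurrence verification to be the only real obstacle; the evaluation on the convex position set and the extraction of low-order terms are routine, and the needed Fibonacci-binomial identity is standard and can be cited alongside~\cite{spivey} just as in the preceding corollary.
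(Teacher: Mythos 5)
Your overall plan is exactly the paper's: verify that $f(k,\ell)=(-1)^{k+\ell}\Fib(k-\ell)$ satisfies Equation~(\ref{equ:fkl}) and then evaluate on a convex-position set using the identity $\sum_{k=0}^{n}(-1)^k\Fib(k)\binom{n}{k}=-\Fib(n)$; the evaluation and the subtraction of the $k=0,1,2$ terms are correct. However, the one step you flag as ``genuinely fiddly'' is the step you get wrong and then leave unresolved. The two terms do \emph{not} carry the same sign: $f(k+1,\ell-1)$ carries $(-1)^{(k+1)+(\ell-1)}=(-1)^{k+\ell}$, while $f(k,\ell-1)$ carries $(-1)^{k+\ell-1}=-(-1)^{k+\ell}$. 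So
\[
f(k+1,\ell-1)+f(k,\ell-1)=(-1)^{k+\ell}\bigl(\Fib(k-\ell+2)-\Fib(k-\ell+1)\bigr)=(-1)^{k+\ell}\Fib(k-\ell)=f(k,\ell),
\]
immediately, by the defining recurrence $\Fib(m+2)=\Fib(m+1)+\Fib(m)$. There is no need for negative-index identities or for running the recurrence ``backwards''; the sign alternation is precisely what converts the sum into the correct Fibonacci difference. With that one-line correction your argument coincides with the paper's proof.
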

\begin{proof}
The function $f(k,\ell)=(-1)^{k+\ell}\Fib(k-\ell)$ satisfies~Equation~(\ref{equ:fkl}) by definition of the Fibonacci numbers.
Consider then a set $S$ of $n$ points in convex position and use the following identity; see~\cite{spivey}.
	\[\sum_{k=0}^{n}(-1)^k \Fib(k)\binom{n}{k}=-\Fib(n).\]
Then, 
	\[\sum_{k=3}^{n} (-1)^{k+0} \Fib(k-0) X_{k,0}(S) = \sum_{k=3}^{n} (-1)^{k} \Fib(k) \binom{n}{k} = -\Fib(n)+n-\binom{n}{2}.\]
\end{proof}

\begin{corollary}\label{cor:cheby}
	Any set $S$ of $n\geq 3$ points in general position satisfies the following equations.
\begin{equation}\label{equ:ChebyT}
\sum_{k=3}^{n} \sum_{\ell=0}^{n-3} 2\cos\left(\frac{(2k+\ell)\pi}{3}\right) X_{k,\ell}(S) = \binom{n}{2}+n-2+ 2\cos\left(\frac{n \pi}{3}\right) 
\end{equation}

\begin{equation}\label{equ:ChebyU}
\sum_{k=3}^{n} \sum_{\ell=0}^{n-3} \frac{2}{\sqrt{3}}\sin\left(\frac{(2k+\ell)\pi}{3} \right) X_{k,\ell}(S) = \binom{n}{2}-n+ \frac{2}{\sqrt{3}}\sin\left(\frac{n \pi}{3}\right)
\end{equation}
\end{corollary}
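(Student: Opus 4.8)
The plan is to apply Theorem~\ref{thm:fkl} once for each of the two weight functions, $f_{\mathrm c}(k,\ell)=2\cos\left(\frac{(2k+\ell)\pi}{3}\right)$ and $f_{\mathrm s}(k,\ell)=\frac{2}{\sqrt{3}}\sin\left(\frac{(2k+\ell)\pi}{3}\right)$. Both functions depend on $k$ and $\ell$ only through the quantity $m:=2k+\ell$, and this $m$ increases by $1$ under the substitution $(k,\ell)\mapsto(k+1,\ell-1)$ and decreases by $1$ under $(k,\ell)\mapsto(k,\ell-1)$. Hence checking Equation~(\ref{equ:fkl}) reduces to verifying the one-variable recurrence $g(m)=g(m+1)+g(m-1)$ for $g(m)=2\cos(m\pi/3)$ and for $g(m)=\frac{2}{\sqrt{3}}\sin(m\pi/3)$. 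This is immediate from the prosthaphaeresis identities $\cos((m+1)\theta)+\cos((m-1)\theta)=2\cos\theta\cos(m\theta)$ and $\sin((m+1)\theta)+\sin((m-1)\theta)=2\cos\theta\sin(m\theta)$ applied with $\theta=\pi/3$, since $2\cos(\pi/3)=1$. These $g$ are precisely the ``trigonometric'' fundamental solutions of the characteristic equation $x^2-x+1=0$, which explains the Chebyshev-type form of the statement, in analogy with the Fibonacci recurrence underlying Corollaries~\ref{cor:fib1} and~\ref{cor:fib2}.

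By Theorem~\ref{thm:fkl} it then suffices to evaluate $F(S)$ when $S$ is a set of $n$ points in convex position, where $X_{k,0}=\binom{n}{k}$ and $X_{k,\ell}=0$ for $\ell\geq 1$. For such $S$ both sums collapse to $\sum_{k=3}^{n} g(2k)\binom{n}{k}$ for the respective $g$. To evaluate $\sum_{k=0}^{n}\binom{n}{k}\cos(2k\pi/3)$ and $\sum_{k=0}^{n}\binom{n}{k}\sin(2k\pi/3)$, I would take the real and imaginary parts of $\sum_{k=0}^{n}\binom{n}{k}e^{2k\pi i/3}=(1+e^{2\pi i/3})^n=(e^{i\pi/3})^n=e^{n\pi i/3}$, obtaining $\cos(n\pi/3)$ and $\sin(n\pi/3)$, respectively. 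Subtracting the $k=0,1,2$ contributions --- which for the cosine sum amount to $1-\frac{n}{2}-\frac12\binom{n}{2}$ and for the sine sum to $\frac{\sqrt{3}}{2}n-\frac{\sqrt{3}}{2}\binom{n}{2}$ --- and multiplying the results by the normalizing constants $2$, resp.\ $\frac{2}{\sqrt{3}}$, yields exactly the claimed right-hand sides $\binom{n}{2}+n-2+2\cos(n\pi/3)$ and $\binom{n}{2}-n+\frac{2}{\sqrt{3}}\sin(n\pi/3)$.

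The argument has no genuine obstacle beyond bookkeeping. The two points that deserve a moment of care are the simplification $1+e^{2\pi i/3}=e^{i\pi/3}$, which is what makes the binomial sum collapse to a single exponential, and the correct handling of the three subtracted low-order terms (including the $\sqrt{3}$ normalization in the sine identity). Accordingly I would present the proof in the same terse format as Corollaries~\ref{cor:fib1} and~\ref{cor:fib2}: exhibit the weight function, verify Equation~(\ref{equ:fkl}) via the sum-to-product formulas, quote the binomial/trigonometric identity, and read off the value of $F(S)$ on a convex point set.
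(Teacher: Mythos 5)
Your proof is correct and follows essentially the same route as the paper: verify Equation~(\ref{equ:fkl}) via the three-term trigonometric recurrence at $\theta=\pi/3$ (your prosthaphaeresis identities are exactly the Chebyshev recurrences at $x=\tfrac12$ that the paper uses), then evaluate $F(S)$ on a convex set. The only difference is cosmetic: you derive the binomial--trigonometric sums directly from $(1+e^{2\pi i/3})^n=e^{in\pi/3}$, whereas the paper quotes the equivalent identities from Sprugnoli's compilation.
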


\begin{proof}
	Equations~(\ref{equ:ChebyT}) and~(\ref{equ:ChebyU}) are obtained using Chebyshev polynomials, see e.g.~\cite{mason}.
The Chebyshev polynomials of the first kind, $T_m(x):=\cos(m \theta)$ with $x=\cos(\theta)$, 
	satisfy the recurrence relation $T_m(x)=2x T_{m-1}(x) - T_{m-2}(x).$
For $x=\frac{1}{2}$, this gives the relation
	\[\cos\left(  \frac{(m-1)\pi}{3}\right)= \cos\left(\frac{m \pi}{3}\right) + \cos\left( \frac{(m-2) \pi}{3}\right).\]
Now set $m=2k+\ell+1$ and observe that the function $f_1(k,\ell):= 2\cos\left(\frac{(2k+\ell)\pi}{3}\right)$ 
	satisfies~Equation~(\ref{equ:fkl}).
Consider then a set $S$ of $n$ points in convex position. 
We use a binomial identity, which can be found in~\cite{binomial}, Equation (1.26),
	\[ \sum_{k=0}^{n} \binom{n}{k}\cos(k y) = 2^n   \cos\left(\frac{n y}{2}\right)\left(\cos\left(\frac{y}{2}\right)\right)^n.\]
With $y=\frac{2\pi}{3}$ and hence $\cos(\frac{y}{2})= \frac{1}{2}$ we obtain Equation~(\ref{equ:ChebyT}):
\begin{eqnarray*}
\sum_{k=3}^{n}  f_1(k,0) X_{k,0}(S)  
	& = & \sum_{k=0}^{n} 2\cos\left(\frac{2k \pi}{3}\right)\binom{n}{k} - \sum_{k=0}^{2} 2\cos\left(\frac{2k\pi}{3}\right) \binom{n}{k}\\
	& = & 2\cos\left(\frac{n \pi}{3}\right) + \binom{n}{2}+n-2. 
\end{eqnarray*}
To prove Equation~(\ref{equ:ChebyU}), consider the Chebyshev polynomial of the second kind,
	\[U_m(x):=\frac{\sin((m+1)\theta)}{\sin(\theta)},\]
with $x=\cos(\theta)$. $U_m(x)$ satisfies  $U_m(x)=2x U_{m-1}(x) - U_{m-2}(x)$.
For $x=\frac{1}{2}$, this gives the relation
	\[\sin\left(  \frac{m \pi}{3}\right)=  \sin\left( \frac{(m+1) \pi}{3}\right) + \sin\left(\frac{(m-1) \pi}{3}\right).\]
Now set $m=2k+\ell$ and observe that the function $f_2(k,\ell):= \frac{2}{\sqrt{3}}\sin\left(\frac{(2k+\ell)\pi}{3}\right)$ 
	satisfies~Equation~(\ref{equ:fkl}).
Consider then a set $S$ of $n$ points in convex position.
We use a binomial identity, which can be found in~\cite{binomial}, Equation (1.27).
	\[ \sum_{k=0}^{n} \binom{n}{k}\sin(k y) = 2^n   \sin\left(\frac{n y}{2}\right)\left(\cos\left(\frac{y}{2}\right)\right)^n.\]
This identity with $y=\frac{2\pi}{3}$ completes the proof of Equation~(\ref{equ:ChebyU}):
\begin{eqnarray*} 
	\sum_{k=3}^{n}  f_2(k,0) X_{k,0}(S) 
	& = & \sum_{k=0}^{n} \frac{2}{\sqrt{3}} \sin\left(\frac{2k \pi}{3}\right) \binom{n}{k} -  \sum_{k=0}^{2} \frac{2}{\sqrt{3}} \sin\left(\frac{2k \pi}{3}\right) \binom{n}{k} \\
	& = & \frac{2}{\sqrt{3}}\sin\left(\frac{n \pi}{3}\right) + \binom{n}{2}-n. 
\end{eqnarray*}\end{proof}
Next we show that the functions $f(k,\ell)$ can be expressed as a linear combination of the functions $f(k+i,0)$, for $i=0,\ldots,\ell.$
\begin{theorem}\label{thm:general}
The general solution of the recurrence relation $\fkl{k}{\ell} = \fkl{k+1}{\ell-1} + \fkl{k}{\ell-1}$ is given by the equation
\begin{equation}
f(k,\ell) = \sum_{i=0}^{\ell} \binom{\ell}{i} \ f(k+i,0).
\label{eq:rr_general}
\end{equation}
\end{theorem}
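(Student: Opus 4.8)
The plan is to prove Equation~(\ref{eq:rr_general}) by induction on $\ell$, using the recurrence~(\ref{equ:fkl}) as the engine and the Pascal identity $\binom{\ell}{i} = \binom{\ell-1}{i-1} + \binom{\ell-1}{i}$ to collect terms. For the base case $\ell=0$, the claimed formula reads $f(k,0) = \binom{0}{0} f(k,0) = f(k,0)$, which is trivially true. (A base case $\ell=1$ can also be checked directly: $f(k,1) = f(k+1,0) + f(k,0) = \sum_{i=0}^{1}\binom{1}{i} f(k+i,0)$, which is exactly the recurrence.)

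For the inductive step, suppose~(\ref{eq:rr_general}) holds for all smaller values of the second argument; in particular it holds for $\ell-1$ with first argument $k$ and with first argument $k+1$. Then I would write
\begin{align*}
f(k,\ell) &= f(k+1,\ell-1) + f(k,\ell-1) \\
&= \sum_{i=0}^{\ell-1} \binom{\ell-1}{i} f(k+1+i,0) + \sum_{i=0}^{\ell-1} \binom{\ell-1}{i} f(k+i,0).
\end{align*}
Re-indexing the first sum by $j = i+1$ turns it into $\sum_{j=1}^{\ell} \binom{\ell-1}{j-1} f(k+j,0)$, and leaving the second sum as $\sum_{j=0}^{\ell-1}\binom{\ell-1}{j} f(k+j,0)$, the combined coefficient of $f(k+j,0)$ for $1 \leq j \leq \ell-1$ is $\binom{\ell-1}{j-1} + \binom{\ell-1}{j} = \binom{\ell}{j}$ by Pascal's rule; the $j=0$ term contributes $\binom{\ell-1}{0} = 1 = \binom{\ell}{0}$ and the $j=\ell$ term contributes $\binom{\ell-1}{\ell-1} = 1 = \binom{\ell}{\ell}$. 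Hence $f(k,\ell) = \sum_{j=0}^{\ell}\binom{\ell}{j} f(k+j,0)$, completing the induction.

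It also would be worth remarking why this is the \emph{general} solution rather than merely \emph{a} solution: the recurrence~(\ref{equ:fkl}) expresses each value $f(k,\ell)$ with $\ell \geq 1$ in terms of values with strictly smaller second coordinate, so the whole array $\{f(k,\ell)\}$ is determined by the "boundary" values $\{f(k,0)\}_{k}$, which may be chosen freely; formula~(\ref{eq:rr_general}) exhibits the unique extension of any such choice. There is essentially no obstacle here — the statement is a clean one-variable induction and the only thing to get right is the book-keeping of the two boundary indices $j=0$ and $j=\ell$ when applying Pascal's identity, together with the convention that $\binom{\ell-1}{-1} = \binom{\ell-1}{\ell} = 0$ so that the re-indexed sums align cleanly.
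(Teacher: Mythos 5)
Your proof is correct and follows essentially the same route as the paper's: induction on $\ell$, splitting via the recurrence, re-indexing one sum, and combining coefficients with Pascal's rule. Your added remark on why the formula is the \emph{general} solution (the array is determined by the boundary values $f(k,0)$, which are free) is a small but welcome clarification that the paper leaves implicit.
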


\begin{proof}
We prove the statement by induction on $\ell$. It is straightforward to verify the base cases $\ell=0$ and $\ell=1$. For the inductive step, let $\lambda \geq 0$ be given and suppose (\ref{eq:rr_general}) holds for $\ell = \lambda$. Then,
\begin{eqnarray*}
f(k,\lambda+1) &=& f(k+1,\lambda)+f(k,\lambda) = \sum_{i=0}^{\lambda} \binom{\lambda}{i} f(k+1+i,0)+\sum_{i=0}^{\lambda} \binom{\lambda}{i} f(k+i,0) = \nonumber\\
&=& \sum_{i=1}^{\lambda+1} \binom{\lambda}{i-1} f(k+i,0)+\sum_{i=0}^{\lambda} \binom{\lambda}{i} f(k+i,0) = \nonumber \\
& =& f(k,0)+\sum_{i=1}^{\lambda}\left[\binom{\lambda}{i}+ \binom{\lambda}{i-1} \right] f(k+i,0)+f(k+i,0) = \nonumber\\
&=& \sum_{i=0}^{\lambda+1} \binom{\lambda+1}{i} \ f(k+i,0).
\end{eqnarray*}
\end{proof}

Note that there may be many functions $f(k,\ell)$ that fulfill Equation (\ref{equ:fkl}), thus many different sums $F(S)$ that only depend on $n$. An interesting point of view is to analyze how many of them are independent.
\begin{proposition}\label{prop:indy}
The maximum number of linearly independent equations of the form 
$F_j(S) =\sum_{k \geq 3} \sum_{\ell \geq 0} f_j(k,\ell) X_{k,\ell}$, where each $f_j(k,\ell)$ satisfies Equation (\ref{eq:rr_general}), in terms of the variables $X_{k,\ell}$ is $n-2$.
\end{proposition}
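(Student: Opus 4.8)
The plan is to argue directly in terms of the coefficient vectors of the equations. The only variables $\Xkl(S)$ that can be nonzero for a set $S$ of $n$ points in general position are those indexed by
\[ I_n \ :=\ \{(k,\ell)\ :\ k\ge 3,\ \ell\ge 0,\ k+\ell\le n\}, \]
since an island on more than $n$ points does not exist (and $|I_n|=\binom{n-1}{2}$). Each equation in the statement is thus a linear form $\sum_{(k,\ell)\in I_n} f_j(k,\ell)\,\Xkl(S)$, and by hypothesis (equivalently, by Theorem~\ref{thm:general}) its coefficients satisfy $f_j(k,\ell)=\sum_{i=0}^{\ell}\binom{\ell}{i}f_j(k+i,0)$. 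For $(k,\ell)\in I_n$ and $0\le i\le\ell$ we have $3\le k+i\le k+\ell\le n$, so the restriction of $f_j$ to $I_n$ depends, via that formula, only on the $n-2$ numbers $f_j(3,0),f_j(4,0),\dots,f_j(n,0)$ and on nothing else.

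First I would establish the upper bound. Consider the linear map $\Phi\colon\R^{n-2}\to\R^{I_n}$ sending $(a_3,\dots,a_n)$ to the vector whose $(k,\ell)$-entry is $\sum_{i=0}^{\ell}\binom{\ell}{i}a_{k+i}$; equivalently, $\Phi$ sends $(a_3,\dots,a_n)$ to the restriction to $I_n$ of the unique solution of~(\ref{equ:fkl}) with $f(j,0)=a_j$ for $3\le j\le n$. By the previous paragraph the coefficient vector of every equation of the given form lies in $\operatorname{im}\Phi$ (take $a_m=f_j(m,0)$), and $\dim\operatorname{im}\Phi\le n-2$; hence any collection of such equations contains at most $n-2$ that are linearly independent.

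Then I would exhibit $n-2$ linearly independent equations, and for this Corollary~\ref{cor:bms} can be used off the shelf: for each $m\in\{3,\dots,n\}$ the function $f_m(k,\ell)=\binom{\ell}{m-k}$ satisfies~(\ref{equ:fkl}) and yields the valid identity $\sum_{(k,\ell)\in I_n} f_m(k,\ell)\,\Xkl(S)=\binom{n}{m}$ (the extra terms with $k>m$ or $\ell<m-k$ all vanish). The coefficient of the variable $X_{m',0}$ in the $m$-th of these equations is $f_m(m',0)=\binom{0}{m-m'}$, which is $1$ when $m'=m$ and $0$ otherwise; therefore the submatrix of the coefficient matrix formed by the columns indexed by $X_{3,0},\dots,X_{n,0}$ is the $(n-2)\times(n-2)$ identity matrix, so these $n-2$ equations are linearly independent. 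Together with the upper bound, the maximum is exactly $n-2$.

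The one point that needs care is the choice of variable range: the recurrence~(\ref{equ:fkl}) on its own has an infinite-dimensional solution space (one free parameter $f(k,0)$ for each $k\ge 3$), and it is precisely the geometric constraint $k+\ell\le n$ that collapses the relevant coefficient vectors to dimension $n-2$. It is also worth noting that ``linearly independent equations in the variables $\Xkl$'' is to be read as linear independence of the coefficient vectors (the $\Xkl$ being treated as formal indeterminates), and that the $n-2$ equations constructed above hold simultaneously on every $n$-point set by Theorem~\ref{thm:fkl}, so they form a genuine consistent independent system rather than an overdetermined one.
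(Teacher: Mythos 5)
Your proof is correct. For the upper bound you do essentially what the paper does: the paper forms the coefficient matrix $M$ with columns indexed by the pairs $(k,\ell)$ with $3\le k\le n$ and $0\le\ell\le n-k$, and uses Equation~(\ref{eq:rr_general}) to perform column operations that annihilate every column with $\ell>0$, leaving only the $n-2$ columns indexed by $X_{3,0},\dots,X_{n,0}$; your map $\Phi$ is a coordinate-free repackaging of exactly that computation. Where you genuinely diverge is on exactness. The paper's own proof of this proposition stops at the upper bound (``the maximum possible rank of $M$ is $n-2$'') and only establishes achievability afterwards, in Proposition~\ref{prop:indy2}, by specializing Corollary~\ref{cor:ps} to $n-2$ distinct nonzero values $x_j$ and recognizing a Vandermonde matrix in the $\ell=0$ columns. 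You instead exhibit the witness inside the proof, using the $n-2$ identities of Corollary~\ref{cor:bms}: since $f_m(m',0)=\binom{0}{m-m'}$ equals $1$ exactly when $m'=m$ and $0$ otherwise, the $\ell=0$ block of your coefficient matrix is the identity matrix, which is a cleaner and more self-contained certificate of independence than the Vandermonde argument. Your closing remark is also worth keeping: the recurrence~(\ref{equ:fkl}) alone has one free parameter $f(k,0)$ for every $k\ge3$, so the bound $n-2$ only makes sense once the variables are restricted to the geometrically meaningful range $k+\ell\le n$, which is what the paper tacitly does when it fixes the column index set of $M$.
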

\begin{proof}
Let $F_j(S) = \sum_{k=3}^{n} \sum_{\ell = 0}^{n-k} f_j(k,\ell) X_{k,\ell}$, for $1 \leq j \leq n-1$, be sums that only depend on~$n$, where $f_j(k,\ell)$ satisfies (\ref{eq:rr_general}) for all $j$. Let us consider the matrix

\begin{equation*}M=
\left( \begin{array}{ccccccc}
 f_1(3,0) & f_1(3,1) & \cdots & f_1(k,\ell) & \cdots & f_1(n-1,1) & f_1(n,0)\\
 f_2(3,0) & f_2(3,1) & \cdots & f_2(k,\ell) & \cdots & f_2(n-1,1) & f_2(n,0)\\
\vdots & \vdots & \vdots & \vdots & \vdots & \vdots & \vdots \\
 f_j(3,0) & f_j(3,1) & \cdots & f_j(k,\ell) & \cdots & f_j(n-1,1) & f_j(n,0)\\
\vdots & \vdots & \vdots & \vdots & \vdots & \vdots & \vdots \\
 f_{n-1}(3,0) & f_{n-1}(3,1) & \cdots & f_{n-1}(k,\ell) & \cdots & f_{n-1}(n-1,1) & f_{n-1}(n,0)
\end{array} \right),
\label{eq:matrix1}
\end{equation*}
whose entries are the $f_j(k,\ell)$ for $3 \leq k \leq n$, $0 \leq \ell \leq n-k,$ and $1 \leq j \leq n-1.$
The function $f_j$ occupies the entries of row number $j$ of $M$. Let $C[h]$ be column number $h$ of $M$. The first column $C[1]$ contains the functions $f_j(k,\ell)$ for values $k=3$ and $\ell=0$, then the following columns are for the values $k=3$ and $\ell \geq 1$ in ascending order, then there are columns containing the functions with values $k=4$ and $\ell \geq 0$ in ascending order, and so on.  This order implies that, for given values $k$ and $\ell$, matrix entries $f_j(k,\ell)$ are in column $C[h]$ of $M$, where $h=\sum_{i=3}^{k-1}(n-i+1)+\ell+1$.\\  
It is sufficient to show that the matrix $M$ has rank $n-2$. Since each $f_j(k,\ell)$ satisfies (\ref{eq:rr_general}), we can express matrix $M$ as follows.

\begin{equation*}
M=
\left( \begin{array}{ccccc}
 f_1(3,0) & \cdots & \sum_{i=0}^{\ell} \binom{\ell}{i} \ f_1(k+i,0) & \cdots & f_1(n,0)\\
 f_2(3,0) & \cdots & \sum_{i=0}^{\ell} \binom{\ell}{i} \ f_2(k+i,0) & \cdots & f_2(n,0)\\
\vdots & \vdots & \vdots & \vdots & \vdots \\
 f_j(3,0) & \cdots & \sum_{i=0}^{\ell} \binom{\ell}{i} \ f_j(k+i,0) & \cdots & f_j(n,0)\\
\vdots & \vdots & \vdots & \vdots & \vdots \\
 f_{n-1}(3,0) & \cdots & \sum_{i=0}^{\ell} \binom{\ell}{i} \ f_{n-1}(k+i,0) & \cdots & f_{n-1}(n,0)
\end{array} \right).
\label{eq:mat1}
\end{equation*}

We observe that the columns corresponding to the functions with $\ell>0$ depend on the $n-2$ columns with $\ell=0$. With adequate operations these columns can be transformed to columns of zeros. That is, if we change each column $C[h]$ corresponding to the functions with $\ell>0$ in the following manner: 
\begin{equation*}
C\left[{\sum_{s=3}^{k-1}(n-s+1)+\ell+1}\right] \ \rightarrow \  
C\left[{\sum_{s=3}^{k-1}(n-s+1)+\ell+1}\right] - \sum_{i=0}^{\ell} \binom{\ell}{i} C\left[{\sum_{s=3}^{k+i-1}(n-s+1)+1}\right],
\end{equation*}
we obtain the matrix

\begin{equation*}
\left( \begin{array}{ccccccc}
 f_1(3,0) & 0 & \cdots & f_1(k,0) & \cdots & 0 & f_1(n,0)\\
 f_2(3,0) & 0 & \cdots & f_2(k,0) & \cdots & 0 & f_2(n,0)\\
\vdots & \vdots & \vdots & \vdots & \vdots & \vdots & \vdots \\
 f_j(3,0) & 0 & \cdots & f_j(k,0) & \cdots & 0 & f_j(n,0)\\
\vdots & \vdots & \vdots & \vdots & \vdots & \vdots & \vdots \\
 f_{n-1}(3,0) & 0 & \cdots & f_{n-1}(k,0) & \cdots & 0 & f_{n-1}(n,0)
\end{array} \right).
\label{eq:mat}
\end{equation*}

Thus, there are only $n-2$ non-zero columns corresponding to the functions $f_j(k,0)$, implying that the maximum possible rank of $M$ is $n-2$.
\end{proof}

Using the general solution of Theorem \ref{thm:general}, we derive further relations for sums over all convex polygons.
\begin{corollary}\label{cor:ps}
For any point set $S$ of $n \geq 3$ points in general position and for any $x \in \R$, it holds that
\begin{equation}\label{eq:ps}
P_x(S) := \sum_{k=3}^n \sum_{\ell=0}^{n-k} x^k \left(1+x\right)^\ell X_{k,\ell} = \left(1+x\right)^n -1 - x\cdot n - x^2 \binom{n}{2}.
\end{equation}
\end{corollary}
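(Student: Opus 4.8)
The plan is to apply Theorem~\ref{thm:fkl} directly, with $x \in \R$ treated as a fixed parameter and the weight function chosen to be $f(k,\ell) = x^k\,(1+x)^\ell$. The first step is to check that this $f$ satisfies the defining recurrence~(\ref{equ:fkl}); this is a one-line computation, factoring $x^k(1+x)^{\ell-1}$ out of the right-hand side:
\[
	f(k+1,\ell-1) + f(k,\ell-1) = x^{k+1}(1+x)^{\ell-1} + x^{k}(1+x)^{\ell-1} = x^{k}(1+x)^{\ell-1}\big(x+1\big) = x^{k}(1+x)^{\ell} = f(k,\ell).
\]
(The same weight can be recovered from Theorem~\ref{thm:general}: taking $f(k,0) = x^k$, the general solution gives $f(k,\ell) = \sum_{i=0}^{\ell}\binom{\ell}{i}x^{k+i} = x^k(1+x)^\ell$, which is why this is the natural candidate.) By Theorem~\ref{thm:fkl}, the sum $P_x(S)$ then depends only on $n = |S|$, so it is enough to evaluate it on one convenient configuration.

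The second step is to evaluate $P_x$ on a set $S$ of $n$ points in convex position. For such a set $X_{k,\ell} = 0$ whenever $\ell \ge 1$, while $X_{k,0} = \binom{n}{k}$ for $3 \le k \le n$, so by the binomial theorem
\[
	P_x(S) = \sum_{k=3}^{n} \binom{n}{k} x^k = \sum_{k=0}^{n}\binom{n}{k}x^k - \binom{n}{0} - \binom{n}{1}x - \binom{n}{2}x^2 = (1+x)^n - 1 - n\,x - \binom{n}{2}x^2.
\]
Together with the invariance established in the first step, this yields~(\ref{eq:ps}) for every $n$-point set in general position.

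There is no substantive obstacle here; the argument is essentially immediate once the recurrence check is done, and the only point deserving a word of care is the value $x=-1$, where $1+x=0$ and the $\ell=0$ summand carries the factor $(1+x)^0 = 0^0$, which we read as $1$ (as in the remark following the statement, so that~(\ref{eq:ps}) specializes to Equation~(\ref{eqn_edelman})). With this convention the recurrence check and the convex-position evaluation go through verbatim; alternatively, one notes that both sides of~(\ref{eq:ps}) are polynomials in $x$, so validity for all $x \neq -1$ already forces the identity at $x=-1$ as well.
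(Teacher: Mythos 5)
Your proof is correct and takes essentially the same route as the paper: the paper obtains the weight $f(k,\ell)=x^k(1+x)^\ell$ by plugging $f(k+i,0)=x^{k+i}$ into Theorem~\ref{thm:general} (which guarantees the recurrence), whereas you verify Equation~(\ref{equ:fkl}) directly, and both then evaluate on a convex-position set. Your explicit treatment of the $x=-1$ case is a small addition the paper's proof omits, but it does not change the substance of the argument.
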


\begin{proof}
Define $f(k+i,0)=x^{k+i}$, then
$$
f(k,\ell)=\sum_{i=0}^{\ell} \binom{\ell}{i} f(k+i,0) = x^k \sum_{i=0}^{\ell} \binom{\ell}{i} x^i = x^k \cdot \left(1+x\right)^\ell.
$$
The result then follows by considering a set of $n$ points in convex position and we have
$$
\sum_{k=3}^n x^k \binom{n}{k} = (1+x)^n -1 - x \cdot n - x^2 \cdot \binom{n}{2}.
$$\end{proof}

\begin{observation}
Corollary \ref{cor:is} is the particular case $x=1$ of Corollary \ref{cor:ps}.
\end{observation}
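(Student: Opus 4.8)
The plan is simply to specialize Corollary~\ref{cor:ps} at $x=1$ and simplify both sides. First I would substitute $x=1$ into Equation~(\ref{eq:ps}). On the left-hand side, $x^k=1$ and $(1+x)^\ell=2^\ell$ for every $k$ and $\ell$, so the double sum becomes $\sum_{k=3}^{n}\sum_{\ell=0}^{n-k} 2^\ell X_{k,\ell}(S)$.

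Next I would reconcile the range of summation with the one appearing in the statement of Corollary~\ref{cor:is}, where $\ell$ runs up to $n-3$ rather than $n-k$. Since a convex $k$-gon with $\ell$ interior points uses $k+\ell$ distinct points of $S$, we have $X_{k,\ell}(S)=0$ whenever $k+\ell>n$; hence for each fixed $k\in\{3,\dots,n\}$ the extra terms with $n-k<\ell\le n-3$ all vanish, and $\sum_{\ell=0}^{n-k} 2^\ell X_{k,\ell}(S)=\sum_{\ell=0}^{n-3} 2^\ell X_{k,\ell}(S)$. This identifies the left-hand side of~(\ref{eq:ps}) at $x=1$ with the left-hand side of Corollary~\ref{cor:is}.

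Finally I would evaluate the right-hand side of~(\ref{eq:ps}) at $x=1$, obtaining $2^n-1-n-\binom{n}{2}$. Using $\binom{n}{2}=\tfrac{n^2-n}{2}$, this simplifies to $2^n-\tfrac{n^2}{2}-\tfrac{n}{2}-1$, which is exactly the right-hand side of Corollary~\ref{cor:is}. There is no real obstacle here; the substitution is immediate once Corollary~\ref{cor:ps} is available, and the only point that deserves an explicit word is the harmless discrepancy in the summation ranges, handled in the previous paragraph.
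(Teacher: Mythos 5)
Your verification is correct and is exactly the immediate substitution the paper has in mind (the observation is stated without proof precisely because it amounts to setting $x=1$ and simplifying $2^n-1-n-\binom{n}{2}$ to $2^n-\tfrac{n^2}{2}-\tfrac{n}{2}-1$). Your remark that the summation ranges agree because $X_{k,\ell}(S)=0$ for $k+\ell>n$ is a correct and worthwhile detail.
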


As shown in Proposition \ref{prop:indy}, the maximum number of possible linearly independent equations is $n-2$. From Corollary \ref{cor:ps}, we can obtain multiple equations of this form. Therefore, we evaluate the independence of these equations.

\begin{proposition}\label{prop:indy2}
	For any point set $S$ of $n \geq 3$ points in general position, let $x_j \in \R_{\neq 0}$, with $1 \leq j \leq n-2$, be distinct values, and consider the $n-2$ equations $P_j(S) =  \sum_{k=3}^n \sum_{\ell=0}^{n-k} x_j^k \left(1+x_j\right)^{\ell} X_{k,\ell}$.  Then these equations are linearly independent.
\end{proposition}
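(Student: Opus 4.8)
The plan is to show that the only linear combination of $P_1(S), \dots, P_{n-2}(S)$ that vanishes identically, as a linear form in the variables $X_{k,\ell}$ with $3 \le k \le n$ and $0 \le \ell \le n-k$, is the trivial one. So I would start by assuming that $c_1, \dots, c_{n-2} \in \R$ satisfy $\sum_{j=1}^{n-2} c_j P_j(S) = 0$ identically in the $X_{k,\ell}$. Reading off the coefficient of each variable $X_{k,\ell}$ yields
\[
\sum_{j=1}^{n-2} c_j\, x_j^{\,k}\,(1+x_j)^{\ell} = 0 \qquad \text{for all } 3 \le k \le n,\ 0 \le \ell \le n-k .
\]
In particular, the choice $\ell = 0$ is admissible for every $k \in \{3,\dots,n\}$ (since $n-k \ge 0$), which already gives the $n-2$ relations $\sum_{j=1}^{n-2} c_j\, x_j^{\,k} = 0$ for $k = 3,4,\dots,n$; the remaining relations with $\ell > 0$ will not even be needed.

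Next I would interpret these $n-2$ relations as a homogeneous linear system in the $n-2$ unknowns $c_1,\dots,c_{n-2}$, with square coefficient matrix $A = (x_j^{\,k})$ whose rows are indexed by $k \in \{3,\dots,n\}$ and columns by $j \in \{1,\dots,n-2\}$. Factoring $x_j^3$ out of column $j$ writes $A = V D$, where $D = \mathrm{diag}(x_1^3,\dots,x_{n-2}^3)$ and $V$ is the square Vandermonde matrix with entries $x_j^{\,k-3}$ (powers $0,1,\dots,n-3$ as $k$ runs over $3,\dots,n$) in the nodes $x_1,\dots,x_{n-2}$. Since the $x_j$ are pairwise distinct, $\det V = \prod_{1 \le i < j \le n-2}(x_j - x_i) \ne 0$, and since each $x_j \ne 0$, $\det D = \prod_{j} x_j^{3} \ne 0$. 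Hence $A$ is nonsingular, so the system forces $c_1 = \dots = c_{n-2} = 0$, establishing the claimed linear independence. (For $n=3$ this degenerates to the single equation with coefficient $x_1^3 \ne 0$, which is also fine.)

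I would close with a remark connecting this to Proposition~\ref{prop:indy}: by the argument there, every coefficient vector of a $P_j$ lies in the $(n-2)$-dimensional span of the columns with $\ell = 0$, so $n-2$ is the maximum number of independent equations one could possibly obtain, and the computation above shows that a generic choice of $n-2$ distinct nonzero values $x_j$ realizes this maximum. There is essentially no obstacle here: the only points requiring a word of justification are that the restriction to $\ell = 0$ is legitimate and that $A$ has the scaled-Vandermonde structure, after which nothing beyond the classical Vandermonde determinant is needed.
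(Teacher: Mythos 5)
Your proposal is correct and follows essentially the same route as the paper: restrict attention to the coefficients of the variables $X_{k,0}$, factor $x_j^3$ out of each column (the paper divides each row by $x_j^3$ in the transposed matrix), and invoke the nonvanishing of the Vandermonde determinant for distinct nodes. Your justification for discarding the $\ell>0$ coordinates (that the $\ell=0$ relations alone already force all $c_j=0$) is a touch more self-contained than the paper's appeal to Proposition~\ref{prop:indy}, but the argument is the same.
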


\begin{proof}
Using the argument of Proposition \ref{prop:indy}, it is sufficient to analyze the columns corresponding to the variables $X_{k,0}$. Thus, we consider the matrix

\begin{equation*}
\left( \begin{array}{ccccccc}
 x_1^3 & x_1^4 & \cdots & x_1^k & \cdots & x_1^n\\
 x_2^3 & x_2^4 & \cdots & x_2^k & \cdots & x_2^n\\
\vdots & \vdots & \vdots & \vdots & \vdots \\
 x_j^3 & x_j^4 & \cdots & x_j^k & \cdots & x_j^n\\
\vdots & \vdots & \vdots & \vdots & \vdots \\
 x_{n-2}^3 & x_{n-2}^4 & \cdots & x_{n-2}^k & \cdots & x_{n-2}^n\\
\end{array} \right).
\label{eq:vand3}
\end{equation*}

Now, we can divide each row $j$ by $x_j^3$ and we obtain

\begin{equation*}
\left( \begin{array}{ccccccc}
 1 & x_1 & \cdots & x_1^{k-3} & \cdots & x_1^{n-3}\\
 1 & x_2 & \cdots & x_2^{k-3} & \cdots & x_2^{n-3}\\
\vdots & \vdots & \vdots & \vdots & \vdots \\
 1 & x_j & \cdots & x_j^{k-3} & \cdots & x_j^{n-3}\\
\vdots & \vdots & \vdots & \vdots & \vdots \\
 1 & x_{n-2} & \cdots & x_{n-2}^{k-3} & \cdots & x_{n-2}^{n-3}\\
\end{array} \right).
\label{eq:vand}
\end{equation*}

This matrix is an $(n-2) \times (n-2)$ Vandermonde matrix and all $x_i$ are distinct. Therefore the rank of the matrix is $n-2$, implying that the $n-2$ equations are linearly independent.
\end{proof}

The following result is an immediate consequence of Proposition~\ref{prop:indy}, Corollary~\ref{cor:ps}, and Proposition~\ref{prop:indy2}.

\begin{corollary}\label{cor:last}
Let $S$ be a set of $n$ points in general position.
Then any sum of the form $\sum_{k \geq 3} \sum_{\ell \geq 0} f(k,\ell) X_{k,\ell}$, where the function $f(k,\ell)$ fulfills Equation (\ref{equ:fkl}), can be expressed by $n-2$ sums of the form (\ref{eq:ps}) with distinct values $x \in \R$.
\end{corollary}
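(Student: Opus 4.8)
The plan is to combine the three earlier results into a single counting-dimension argument. By Proposition~\ref{prop:indy}, the space of functions $f(k,\ell)$ satisfying Equation~(\ref{equ:fkl}) yields, after identification via the map $f \mapsto (F(S))$, a space of invariant sums of dimension at most $n-2$ in the variables $X_{k,\ell}$: indeed, the matrix argument there shows that every such sum is determined by its "$\ell=0$ part", i.e.\ by the vector $(f(3,0),\dots,f(n,0))$, and these live in a space of dimension $n-2$. Conversely, Corollary~\ref{cor:ps} produces, for each $x\in\R$, a concrete invariant sum $P_x(S)$ of exactly the form in question, and Proposition~\ref{prop:indy2} shows that for $n-2$ distinct nonzero values $x_1,\dots,x_{n-2}$ the corresponding sums $P_{x_1}(S),\dots,P_{x_{n-2}}(S)$ are linearly independent.

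First I would fix distinct nonzero reals $x_1,\dots,x_{n-2}$ and set $f_j(k+i,0) := x_j^{k+i}$, so that by Theorem~\ref{thm:general} the associated functions $f_j(k,\ell) = x_j^k(1+x_j)^\ell$ all satisfy Equation~(\ref{equ:fkl}), and the $P_{x_j}(S)$ are precisely the sums of form~(\ref{eq:ps}). Next I would invoke Proposition~\ref{prop:indy2} to conclude that these $n-2$ functions, viewed as row vectors of coefficients $(f_j(k,\ell))_{k,\ell}$, are linearly independent; equivalently, they span an $(n-2)$-dimensional subspace $V$ of the coefficient space. Then, given an arbitrary $f(k,\ell)$ satisfying Equation~(\ref{equ:fkl}) — equivalently, by Theorem~\ref{thm:general}, Equation~(\ref{eq:rr_general}) — Proposition~\ref{prop:indy} tells us the whole space of such coefficient vectors has dimension exactly $n-2$. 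Since $V$ is an $(n-2)$-dimensional subspace of an $(n-2)$-dimensional space, $V$ is the whole space, so $f(k,\ell)$ is a linear combination $\sum_{j=1}^{n-2} c_j\, f_j(k,\ell)$ of the $x_j^k(1+x_j)^\ell$, and hence $\sum_{k\geq 3}\sum_{\ell\geq 0} f(k,\ell) X_{k,\ell} = \sum_{j=1}^{n-2} c_j P_{x_j}(S)$, as claimed.

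The one point requiring a little care is matching the two dimension counts so that "subspace of full dimension equals the whole space" actually applies: Proposition~\ref{prop:indy} is phrased about equations $F_j(S)$ with $f_j$ satisfying~(\ref{eq:rr_general}), and what it really establishes is that in the truncated range $3\le k\le n$, $0\le\ell\le n-k$ the coefficient vectors of admissible $f$ form a space of rank $n-2$ (the $n-2$ "pivot" columns being those with $\ell=0$). I would note that any $f(k,\ell)$ satisfying~(\ref{equ:fkl}) and applied to an $n$-point set only ever sees the arguments with $k+\ell\le n$, so restricting attention to this truncated range is harmless. With both the ambient space of invariant sums and the span of $\{P_{x_j}\}$ pinned at dimension $n-2$, the conclusion is immediate. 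I do not expect a genuine obstacle here; the only thing to be scrupulous about is bookkeeping of index ranges so the Vandermonde argument of Proposition~\ref{prop:indy2} and the column-reduction of Proposition~\ref{prop:indy} refer to the same finite set of variables $X_{k,\ell}$.
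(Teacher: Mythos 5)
Your proposal is correct and follows essentially the same route as the paper, which states the corollary as an immediate consequence of Proposition~\ref{prop:indy}, Corollary~\ref{cor:ps}, and Proposition~\ref{prop:indy2}; you simply make explicit the dimension count (the space of admissible coefficient vectors has dimension at most $n-2$ by the column reduction, and the $n-2$ independent vectors from the Vandermonde argument therefore span it). Your remark about restricting to the truncated index range $k+\ell\le n$ is a worthwhile clarification but does not constitute a different approach.
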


\section{Moment sums}\label{sec:moment_sums} %

Recall that Theorem~\ref{thm:one_point} states that for any set $S$ of $n$ points in general position, the alternating sum of the numbers of polygons with one interior point is $\ASum[1](S) = n-h$. 
Combining this with the first moment of the numbers of empty polygons $M_1(S) = 2\binom{n}{2}-h$ from~\cite{prs_oecp_06}, we observe that the difference $M_1(S) - \ASum[1](S) = 2\binom{n}{2} - n$ is again a function that only depends on the cardinality $n$ of~$S$ and hence is independent of the combinatorics of the underlying point set~$S$. 
In this section, we show that the latter observation can actually be extended to moment sums for convex polygons with at most two interior points. 

\begin{theorem}\label{thm:moment_sums}
	For any set $S$ of $n$ points in general position and for integers $0 \leq r\leq 2$, it holds that
	\[
		\FrSum(S) := \sum_{ k\geq 3} \sum_{\ell=0}^r (-1)^{k-\ell+1} \mrk[r-\ell]{k-\ell} \Xkl(S) = 
			\left\{\begin{array}{ll}
					\binom{n}{2}-n+1 		& \mbox{for}\ r=0 \\
					2\binom{n}{2}-n 		& \mbox{for}\ r=1 \\
					-\binom{n}{2}+n				& \mbox{for}\ r= 2 \\ 
			\end{array}\right.
	\]
\end{theorem}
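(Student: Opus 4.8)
The plan is to apply Theorem~\ref{thm:fkl} (the invariance criterion) directly to the function appearing in $\FrSum(S)$. For fixed $r$, set
\[
	f_r(k,\ell) = \begin{cases} (-1)^{k-\ell+1}\, \mrk[r-\ell]{k-\ell} & \text{for } 0\leq \ell \leq r \\ 0 & \text{for } \ell > r,\end{cases}
\]
so that $\FrSum(S) = \sum_{k\geq 3}\sum_{\ell\geq 0} f_r(k,\ell)\Xkl(S)$. The key step is to verify that $f_r$ satisfies Equation~(\ref{equ:fkl}), i.e.\ $f_r(k,\ell) = f_r(k+1,\ell-1) + f_r(k,\ell-1)$. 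Once this is established, Theorem~\ref{thm:fkl} guarantees that $\FrSum(S)$ depends only on $n$, and it remains to evaluate it on a single convenient configuration — a set of $n$ points in convex position, where $\Xkl[k,\ell] = 0$ for $\ell \geq 1$ and $\Xkl[k,0] = \binom{n}{k}$. Then $\FrSum(S) = \sum_{k\geq 3} f_r(k,0)\binom{n}{k} = \sum_{k\geq 3} (-1)^{k+1}\mrk[r]{k}\binom{n}{k} = M_r(C_n)$ where $C_n$ is the convex $n$-gon, and this is exactly the right-hand side of Equations~(\ref{eqn_edelman}),~(\ref{eqn_ahrens}) with the substitutions $h=n$, and the $r=2$ case of~(\ref{eqn_tr}) evaluated on convex position. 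So the numerical values $\binom{n}{2}-n+1$, $2\binom{n}{2}-n$, $-\binom{n}{2}+n$ follow from the known moment identities of~\cite{prs_oecp_06} applied to $C_n$ (noting $T_2(C_n)$ equals the number of pairs of disjoint convex-position edges with empty wedge region, which on a convex polygon counts all such pairs and yields $-(-\binom{n}{2}+n)$; this can also be checked directly via a binomial identity).

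\textbf{The recurrence check.} The main work is verifying $f_r(k,\ell) = f_r(k+1,\ell-1) + f_r(k,\ell-1)$ for $0\leq\ell\leq r$ (the cases $\ell=0$ and $\ell>r$ being immediate since $f_r$ vanishes for negative interior counts and for $\ell>r$, with the small caveat that for $\ell=0$ the identity reads $f_r(k,0) = f_r(k+1,-1)+f_r(k,-1) = 0$, which fails unless — wait; in fact for $\ell=0$ Equation~(\ref{equ:fkl}) is not required, as in the corollaries of Section~\ref{sec:weighted_sums} the functions are only defined and the recurrence only used for $\ell\geq 1$). For $1\leq \ell\leq r$, writing $f_r(k,\ell) = (-1)^{k-\ell+1}\mrk[r-\ell]{k-\ell}$, both terms on the right are $(-1)^{k-\ell+2}\mrk[r-\ell+1]{k-\ell+1} + (-1)^{k-\ell+2}\mrk[r-\ell+1]{k-\ell}$. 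Setting $s = r-\ell+1 \geq 1$ and $j = k-\ell$, the required identity becomes
\[
	-\mrk[s-1]{j+1} = \mrk[s]{j+1} + \mrk[s]{j},
\]
i.e.\ $\mrk[s]{j+1} + \mrk[s]{j} + \mrk[s-1]{j+1} = 0$. Since $\mrk[s]{j} = \frac{j}{s}\binom{j-s-1}{s-1}$ for $j\geq 2s$ (and $0$ otherwise, except $\mrk[0]{\cdot}=1$), this is a binomial identity to be checked by cases on whether $j$, $j+1$ exceed the relevant thresholds $2s$, $2(s-1)$; for $s=1$ one uses $\mrk[0]{\cdot}=1$ and $\mrk[1]{j} = j$, giving $(j+1) + j + (-1)\cdot 1$... which needs the sign conventions to line up, so careful bookkeeping of the three-piece definition of $\mrk{\cdot}$ near the boundary $k=2r$ is essential. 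I expect this boundary bookkeeping — reconciling the $\frac{k}{r}\binom{k-r-1}{r-1}$ formula with the cutoff at $k=2r$ and the special value at $r=0$ — to be the main obstacle; the interior identity $\frac{j}{s}\binom{j-s-1}{s-1} = \frac{j+1}{s}\binom{j-s}{s-1} - \frac{j+1}{s-1}\binom{j-s-1}{s-2}$ itself is routine (Pascal-type manipulation after clearing denominators).

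\textbf{Alternative packaging.} Rather than a direct binomial computation, a cleaner route is to recognize the telescoping structure already implicit in Section~\ref{sec:moment_sums}: by Observation~\ref{obs:lincomb}, sums of functions satisfying Equation~(\ref{equ:fkl}) again satisfy it, so it suffices to exhibit $f_r$ as such a combination. Concretely, one can try to show that $f_r(k,\ell)$ equals $\sum_{i=0}^{\ell}\binom{\ell}{i} f_r(k+i,0)$ as forced by Theorem~\ref{thm:general}, and then the whole statement reduces to the single identity $(-1)^{k-\ell+1}\mrk[r-\ell]{k-\ell} = \sum_{i=0}^{\ell}\binom{\ell}{i}(-1)^{k+i+1}\mrk[r]{k+i}$; verifying this for $\ell=1,2$ (the only cases needed) is a short finite check using $\mrk[r]{k} = \frac{k}{r}\binom{k-r-1}{r-1}$. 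Either way, once invariance is in hand, the evaluation on $C_n$ closes the proof, using that the $r=0,1,2$ moments of the convex polygon are known — and in particular that the $r=2$ value $-\binom{n}{2}+n$ is consistent with $M_2(C_n) = -T_2(C_n)$ from Equation~(\ref{eqn_tr}).
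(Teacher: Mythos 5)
Your overall strategy is exactly the paper's: define $f_r(k,\ell)=(-1)^{k-\ell+1}m_{r-\ell}(k-\ell)$ for $0\le\ell\le r$ and $0$ otherwise, verify Equation~(\ref{equ:fkl}), invoke Theorem~\ref{thm:fkl} for invariance, and evaluate on a convex-position set. The gap is that the one step carrying all the content --- the recurrence check --- is executed with sign and index errors and then left unresolved. Expanding correctly, $f_r(k+1,\ell-1)=(-1)^{(k+1)-(\ell-1)+1}m_{r-\ell+1}\bigl((k+1)-(\ell-1)\bigr)=(-1)^{k-\ell+1}m_{r-\ell+1}(k-\ell+2)$ while $f_r(k,\ell-1)=(-1)^{k-\ell+2}m_{r-\ell+1}(k-\ell+1)$: the two terms carry \emph{opposite} signs, and their arguments are $k-\ell+2$ and $k-\ell+1$, not $k-\ell+1$ and $k-\ell$ as you wrote. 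The identity to verify is therefore the \emph{difference}
\[
	m_{r-\ell}(k-\ell)\;=\;m_{r-\ell+1}(k-\ell+2)-m_{r-\ell+1}(k-\ell+1),
\]
whereas your displayed target $-m_{s-1}(j+1)=m_s(j+1)+m_s(j)$ is simply false (for $s=1$ it reads $-1=2j+1$). You noticed the mismatch (``needs the sign conventions to line up'') but deferred it to ``careful bookkeeping'' instead of resolving it, so invariance is not actually established. This is precisely the verification the paper performs case by case, e.g.\ $m_1(k)-m_1(k-1)=1=m_0(k-2)$ and $m_2(k+1)-m_2(k)=k-1=m_1(k-1)$ for $k\ge4$ with separate small-$k$ checks at the threshold $k=2r$.

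Two smaller points. First, the case $\ell=r+1$ is not ``immediate since $f_r$ vanishes for $\ell>r$'': there $f_r(k,r+1)=0$ must be matched against $f_r(k+1,r)+f_r(k,r)=(-1)^{k-r}m_0(k-r+1)+(-1)^{k-r+1}m_0(k-r)$, which vanishes only by cancellation (using $m_0\equiv1$), not trivially; the paper checks this explicitly. Second, once invariance is in hand, your evaluation on convex position is fine --- the paper computes the $r=2$ value directly from $\sum_{k}(-1)^kk\binom{n}{k}=\sum_{k}(-1)^kk^2\binom{n}{k}=0$ rather than going through $T_2(C_n)$, which is cleaner than invoking Equation~(\ref{eqn_tr}); and your ``alternative packaging'' via Theorem~\ref{thm:general} is a legitimate equivalent route, but the finite identity for $\ell=1,2$ still has to be computed there too, which you did not do.
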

\begin{proof}
	For $r \in \{0,1\}$, the statement follows from the results of~\cite{prs_oecp_06} and Theorem~\ref{thm:one_point}.
	For the case $r=2$, we have 
	$$F_2(S)= \sum_{k \geq 3} \sum_{\ell=0}^{2} (-1)^{k-\ell+1} m_{2-\ell}(k-\ell)\Xkl(S)$$
	$$= \sum_{k \geq 3} (-1)^{k+1}\frac{k(k-3)}{2}X_{k,0}+(-1)^k(k-1)X_{k,1}+(-1)^{k+1}X_{k,2}. $$
	Let %
	\[ f(k,\ell)  :=  \left\{\begin{array}{ll}
						(-1)^{k-\ell+1} m_{2-\ell}(k-\ell) & \mbox{for}\ 0 \leq \ell \leq 2 \\
						0 & \mbox{for}\ 2 < \ell \\
						\end{array}\right.
	\]
	Using $f(k,\ell)$, we can express the sum as $F_2(S) = \sum_{k\geq 3} \sum_{\ell\geq 0} f(k,\ell) \Xkl(S)$.

	We first show that $F_2(S)$ only depends on the cardinality $n$ of $S$ by proving that the function $f(k,\ell)$ fulfills Equation~(\ref{equ:fkl}) from Theorem~\ref{thm:fkl}, namely, for all $\ell>0$,  $f(k+1,\ell-1) + f(k,\ell-1) = f(k,\ell)$.
	For $\ell > 3$, Equation~(\ref{equ:fkl}) is trivially true as all terms in the equation are equal to zero. 
	For $\ell=3$, we have $f(k,\ell) = 0$ and
	\begin{eqnarray*}
		f(k+1,\ell-1) + f(k,\ell-1)	& = & (-1)^{(k+1) - (\ell-1) + 1} m_{2-(\ell-1)}((k+1)-(\ell-1)) \\
					&& + (-1)^{k-(\ell-1)+1} m_{2-(\ell-1)}(k-(\ell-1)) \\
		& = & (-1)^{k-\ell+3} m_{0}(k-\ell+2) + (-1)^{k-\ell+2} m_{0}(k-\ell+1) \ = \  0. \\
	\end{eqnarray*}
	For $0 < \ell \leq 2$, Equation~(\ref{equ:fkl}) for $f(k,\ell)$ can be stated in terms of the function $\mrk{k}$, which gives 
	\begin{eqnarray}
		\label{eqn:mrkl}
		m_{2-\ell+1}(k-\ell+2) - m_{2-\ell+1}(k-\ell+1) & = & m_{2-\ell}(k-\ell).
	\end{eqnarray}
	We consider the case $\ell=2$, $\ell=1$ and $\ell=0$ separatedly.
	\begin{itemize}
	\item Let $\ell=2$. The truth of Equation~(\ref{eqn:mrkl}) follows directly from $\mrk[1]{k}=k$ for $k \geq 2$ and $\mrk[0]{k}=1$.
	\item Let $\ell=1$. If $k=3$, then $m_2(k)=0$ and $m_2(k+1)=m_1(k-1)=2$; the statement holds. If $k\geq 4$, then $m_{2}(k+1)-m_{2}(k)=\frac{(k+1)}{2}(k-2)-\frac{k}{2}(k-3)=k-1=m_1(k-1)$; the statement holds. 
	\item Let $\ell=0$. If $k=3$ then $m_3(k+2)=m_3(k+1)=m_2(k)=0$; the statement holds. If $k=4$, then $m_3(k+1)=0$ and $m_3(k+2)=m_2(k-1)=2$; the statement holds. 
			If $k\geq 5$, then $m_{3}(k+2)-m_{3}(k+1)=\frac{k+2}{3}\binom{k-2}{2}-\frac{k+1}{3}\binom{k-3}{2}=\frac{k}{2}(k-3)=m_2(k)$; the statement holds.
			\end{itemize}
	This finishes the proof that $F_2(S)$ only depends on the cardinality $|S|=n$.
		So what remains to show is that $F_2(S) = -\binom{n}{2}+n$. 
	To this end, let $S$ be set of~$n$ points in convex position.
	Then, $X_{k,0}(S)=\binom{n}{k}$ and $X_{k,\ell}(S)=0$ for $\ell>0$. We get
	$$F_2(S)= \sum_{k \geq 3} \sum_{\ell=0}^{2} (-1)^{k-\ell+1}m_{2-\ell}(k-\ell)\Xkl(S)=
	 \sum_{k=3}^{n} (-1)^{k+1}\frac{k(k-3)}{2}\binom{n}{k}.$$
	Since $\sum_{k=0}^{n}(-1)^{k}k\binom{n}{k}=0$ and $\sum_{k=0}^{n}(-1)^{k}k^2\binom{n}{k}=0$, see~\cite{spivey}, we have
	$$F_2(S)=0-\sum_{k=0}^{2} (-1)^{k+1}\frac{k(k-3)}{2}\binom{n}{k}=-\binom{n}{2}+n.$$

\end{proof}

\section{Higher dimensions}\label{sec:higher_dimensions}

In this section, we consider the generalization of the previous results to $d$-dimensional Euclidean space $\R^d$, for $d\geq 3$.
To this end, let $S$ be a set of $n\geq d+1$ points in $\R^d$ in general position, that is, no hyperplane contains more than $d$ points of~$S$.
 We again denote by $h$ the number of extreme points of $S$, that is, the points of~$S$ that lie on the boundary of the convex hull of~$S$. 
For each $k\geq d+1$, let $\Xk(S)$ be the number of empty convex $k$-vertex polytopes spanned by~$S$, and let $\Xkl(S)$ be the number of convex $k$-vertex polytopes of $S$ that have exactly $\ell$ points of~$S$ in their interior.

In~\cite{prs_oecp_06}, the authors extend their results on alternating moments of point sets in the Euclidean plane to point sets $S$ in $\R^d$.
They show %
that %
\begin{eqnarray*}
	M_0(S) \ := & \mathlarger{\sum}\limits_{k\geq d+1} (-1)^{k+d+1} \Xk(S) & = \ \sum\limits_{k=0}^d (-1)^{d-k} \binom{n}{k} %
																						\hspace{1.6cm} \mbox{(\cite{prs_oecp_06}, Theorem 4.1.)}	\\
	M_1(S) \ := & \mathlarger{\sum}\limits_{k\geq d+1} (-1)^{k+d+1} k \Xk(S) & = \ \sum\limits_{k=0}^d (-1)^{d-k} k\binom{n}{k} \ + \ i %
																						\hspace{0.5cm} \mbox{(\cite{prs_oecp_06}, Theorem 4.2.)}	%
\end{eqnarray*}
where $M_r(S)$ is again called the \emph{$r$-th alternating moment} of $\{\Xk(S)\}_{k\geq d+1}$ and $i=n-h$ is the number of interior points of $S$. 
The proofs of~\cite{prs_oecp_06} make use of a continous motion argument of points in $\mathbb{R}^d$. 
The same argument is applied in the following, so we refer the reader also to~\cite{prs_oecp_06}.  

\subsection{Alternating sums}\label{sec:alt_sums_rd}

For an oriented facet $f$, let $\Xk(S;f)$ be the number of empty convex $k$-vertex polytopes that have $f$ as a boundary facet and lie on the positive side of $f$.
In the proof of Theorem 4.2 in~\cite{prs_oecp_06}, the authors generalize Lemma~\ref{lem:one_fixed_point} to $\R^d$. The following lemma is implicit in~\cite{prs_oecp_06}.

\begin{lemma}[\cite{prs_oecp_06}]\label{facetlemma}
	For any set $S$ of $n \geq d+1$ points in general position in $\R^d$ and any oriented facet $f$ spanned by $d$ points of $S$, 
	it holds that
	\[ 
		\ASum[0](S;f) := \sum_{k\geq d+1} (-1)^{k+d+1} \Xk(S;f) \ = \ 
		\left\{\begin{array}{ll}
					1 & \mbox{ if $f$ has at least one point of $S$ } \\
					  & \mbox{ on the positive side} \\
					0 & \mbox{ otherwise.} \\
					\end{array}\right.
	\]
\end{lemma}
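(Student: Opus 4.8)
The plan is to mirror, in $\R^d$, the continuous-motion argument that Pinchasi, Radoi\v{c}i\'c, and Sharir use in the plane (and which underlies Lemma~\ref{edgelemma}), applied to the alternating sum of empty convex polytopes incident to a fixed oriented facet $f$. First I would fix $f=\{a_1,\dots,a_d\}$ together with its orientation, and restrict attention to the points of $S$ lying on the (closed) positive side of~$f$; points strictly on the negative side never appear as vertices or interior points of a polytope counted by $\Xk(S;f)$, so they may be deleted without changing $\ASum[0](S;f)$. I would then base-case the statement on a configuration where it is transparent: take $S$ with its non-$f$ points in convex position ``above'' $f$, or more simply reduce to the point $b\in S$ closest to the hyperplane through~$f$; if no point of $S$ lies on the positive side then the only empty polytope incident to $f$ is the $(d+1)$-simplex $\{a_1,\dots,a_d,b\}$ — wait, that still has a point on the positive side, so in fact when $f$ is a convex-hull facet and there \emph{is} at least one positive-side point one must actually compute the sum; the genuinely trivial case is when $S$ has no point on the positive side of $f$ at all, giving $\ASum[0](S;f)=0$.

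Next I would carry out the motion argument: continuously move the points of $S$ one at a time along generic paths, so that at each critical instant exactly one point $r$ becomes contained in the hyperplane spanned by $d$ other points $p_1,\dots,p_d$ of~$S$ (a $d$-dimensional degeneracy). The combinatorial type of $\{\Xk(S;f)\}$ can only change at such an instant, and only through polytopes having $\{p_1,\dots,p_d\}$ as a facet with $r$ on the relevant side, or polytopes having $p_1,\dots,p_d,r$ among their vertices. As in the proof of Theorem~\ref{thm:fkl}, when $r$ crosses the hyperplane through $\{p_1,\dots,p_d\}$ each empty polytope $Q$ with that facet and with $r$ just inside is replaced by the polytope $Q'$ obtained by ``coning'' the facet over $r$ (replacing the facet $\{p_1,\dots,p_d\}$ by the $d$ facets through $r$ and the ridges of $\{p_1,\dots,p_d\}$), which has one more vertex; the signs $(-1)^{k+d+1}$ and $(-1)^{(k+1)+d+1}$ are opposite, so the disappearing and appearing contributions to $\ASum[0](S;f)$ cancel in pairs — \emph{provided} the facet being crossed is not $f$ itself. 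If $f$ itself is involved, i.e. $r$ crosses the hyperplane of $f$: since $f$ is a convex-hull facet of $S$ and all relevant points are on its positive side, a point only leaves or enters the positive side through the relative interior of $f$, and one checks the two incident alternating sums $\ASum[0](S\setminus\{r\};f)$ and its reflection are both $1$, again producing no net change (this is precisely the $d$-dimensional analogue of the ``$qr$ not a hull edge'' step in the proof of Lemma~\ref{lem:one_fixed_point}).

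Having shown $\ASum[0](S;f)$ is invariant under generic motions within each of the two cases (``$f$ sees at least one point'' vs.\ ``$f$ sees no point''), I would finish by deforming any such $S$ to a canonical position and evaluating directly. For the ``sees no point'' case the sum is manifestly $0$. For the ``sees at least one point'' case I would move to the configuration where the points on the positive side of $f$ are in convex position together with $f$, so that the polytopes incident to $f$ that are empty are exactly the $k$-vertex faces of a cyclic-like polytope containing $f$; the alternating sum $\sum_{k\ge d+1}(-1)^{k+d+1}\binom{m}{k-d}$ over the number $m$ of positive-side points then telescopes to $1$ (equivalently, one invokes $\sum_{k}(-1)^k\binom{m}{k}=0$ as in the empty-polytope identities of~\cite{prs_oecp_06}). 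I expect the main obstacle to be the bookkeeping in the ``coning'' step: in dimension $d\ge 3$ replacing a facet by a cone over a new vertex produces several new facets and the resulting polytope $Q'$ need not be simplicial, so one must argue carefully that the correspondence $Q\leftrightarrow Q'$ is a bijection between the polytopes gained and lost at the critical instant and that it shifts the vertex count by exactly one — this is where a clean appeal to the corresponding argument of~\cite{prs_oecp_06} (Theorem~4.2) is most valuable, and indeed the lemma is only claimed to be ``implicit'' there, so the proof will largely consist of extracting and stating that argument precisely rather than inventing new machinery.
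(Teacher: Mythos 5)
The paper does not actually prove this lemma: it is stated with the attribution ``implicit in~\cite{prs_oecp_06}'' (extracted from their proof of Theorem~4.2), so there is no in-paper argument to compare against. Your reconstruction is, in substance, the right one and matches what that reference does: delete the points on the negative side of $f$, show by continuous motion that $\ASum[0](S;f)$ is invariant, and evaluate on a configuration where the $m$ positive-side points together with the $d$ vertices of $f$ are in convex position, so that $\Xk(S;f)=\binom{m}{k-d}$ and the alternating sum telescopes to $1$ for $m\geq 1$ (and is vacuously $0$ for $m=0$). The pairing $Q\leftrightarrow Q'=\mathrm{conv}(Q\cup\{r\})$ at a crossing, with vertex counts differing by one and hence opposite signs, is exactly the mechanism used in the planar Lemma~\ref{edgelemma} and in Theorem~\ref{thm:fkl_rd}.

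Two points in your write-up need repair, though neither is fatal. First, the sentence ``a point only leaves or enters the positive side through the relative interior of $f$'' is false: a point can cross the hyperplane of $f$ anywhere, and such a crossing outside $\mathrm{conv}(f)$ still changes which polytopes have $f$ as a facet. The clean fix is to fix the $d$ vertices of $f$ and move the remaining points only within the \emph{open} positive halfspace (which is connected, so the canonical convex-position configuration is reachable); then the hyperplane of $f$ is never crossed and the case ``$g=f$'' simply never arises. Second, in $\R^d$ with $d\geq 3$ a generic degeneracy is an affine dependence among $d+1$ points whose Radon partition need not be of type $(1,d)$; for instance, four points in $\R^3$ can become coplanar in convex position. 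You should therefore add the observation that, once all moving points stay strictly on the positive side of $f$, the quantities $\Xk(S;f)$ (membership of a point in a vertex set's hull, a point being a vertex of that hull, and $f$ being a facet) can only change at events where one point passes through the relative interior of the simplex spanned by $d$ others, so the non-$(1,d)$ degeneracies are harmless. With these two clarifications your argument is complete; otherwise, as you note, one may simply cite the corresponding step of~\cite{prs_oecp_06}.
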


In accordance to the planar case, we denote by 
$\ASum[1]^p(S)$ the alternating sum of convex polytopes that contain exactly $p$ in their interior.
Using Lemma~\ref{facetlemma}, Lemma~\ref{lem:one_fixed_point} can be generalized to the following.

\begin{lemma}\label{lem:one_fixed_point_rd}
	For any set $S$ of $n\geq d+1$ points in general position in $\R^d$ and any point $p \in S$ it holds that 
	\[ 
		\ASum[1]^p(S) = \left\{\begin{array}{ll}
					0 & \mbox{ if $p$ is an extreme point of $S$} \\
					1 & \mbox{ otherwise.} \\
					\end{array}\right.
	\]
\end{lemma}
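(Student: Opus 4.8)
The plan is to mirror the planar proof of Lemma~\ref{lem:one_fixed_point}, replacing the edge lemma (Lemma~\ref{edgelemma}) by its $d$-dimensional counterpart Lemma~\ref{facetlemma}, and the event ``$p$ crosses an edge spanned by two points'' by the event ``$p$ crosses a hyperplane spanned by $d$ points of $S\setminus\{p\}$''. First, if $p$ is an extreme point of $S$, it lies on the boundary of the convex hull and hence cannot be in the interior of any convex polytope spanned by points of $S$; thus $\ASum[1]^p(S)=0$. So assume $p$ is not extreme. The key observation, exactly as in the plane, is that a convex $k$-vertex polytope spanned by $S$ containing exactly $p$ in its interior is precisely an empty convex $k$-vertex polytope of $S\setminus\{p\}$ that contains $p$; this lets us rewrite $\ASum[1]^p(S)$ in terms of empty polytopes of $S\setminus\{p\}$.

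For the base case, place $p$ sufficiently close to a facet $f$ of the convex hull of $S$ — close enough that the empty polytopes of $S\setminus\{p\}$ containing $p$ are exactly those incident to $f$ and lying on its inner side. Applying Lemma~\ref{facetlemma} to $S\setminus\{p\}$ with $f$ oriented inward gives $\ASum[1]^p(S)=\ASum[0](S\setminus\{p\};f)=1$. For the general case, move $p$ along a generic continuous path inside the convex hull of $S$ from an arbitrary position to a position close to a hull facet, keeping all other points fixed; the path is chosen to avoid all lower-dimensional incidences of the arrangement of hyperplanes spanned by $d$-subsets of $S\setminus\{p\}$, so that the only combinatorial change at any instant is $p$ crossing a single such hyperplane $H$. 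Let $g$ be the facet spanned by the $d$ points defining $H$; since $p$ stays strictly inside the convex hull, $g$ is not a facet of the convex hull of $S$ and thus has points of $S$ on both sides. When $p$ passes from the positive to the negative side of $g$, the polytopes containing $p$ that disappear are exactly the empty polytopes of $S\setminus\{p\}$ incident to $g$ on its positive side, whose alternating sum is $\ASum[0](S\setminus\{p\};g)=1$ by Lemma~\ref{facetlemma}, while the polytopes containing $p$ that appear are those incident to $g$ on its negative side, whose alternating sum is likewise $1$. Hence $\ASum[1]^p(S)$ is unchanged throughout the motion and equals the value $1$ obtained in the base case.

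The main obstacle — and the only part needing genuine care — is the continuous-motion bookkeeping in $\R^d$: one must argue that a generic path exists producing at most one $(d{+}1)$-point coplanarity event at a time and never making $p$ touch a hull facet; that each such event affects only polytopes having the relevant $d$ points as a facet and $p$ as an interior point (all other polytopes, including those that keep $p$ but change their vertex set, being accounted for by the pairing); and that the disappearing and appearing families are correctly identified with the positive- and negative-side empty polytopes of $S\setminus\{p\}$ incident to $g$. Since this is precisely the $d$-dimensional motion argument already used in~\cite{prs_oecp_06} (and in the proof of Lemma~\ref{lem:one_fixed_point}), I would invoke it rather than re-derive it, and keep the write-up short by referring back to those proofs.
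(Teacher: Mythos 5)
Your proposal matches the paper's proof essentially verbatim: the same reduction to empty polytopes of $S\setminus\{p\}$, the same base case with $p$ near a hull facet handled by Lemma~\ref{facetlemma}, and the same continuous-motion argument in which each crossing of a facet hyperplane changes the alternating sum by $1-1=0$ via Lemma~\ref{facetlemma} applied to both sides. The argument is correct and no further changes are needed.
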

\begin{proof}
	If $p$ is an extreme point of $S$, then it cannot be in the interior of any convex polytope spanned by points of $S$ and hence $\ASum[1]^p(S) = 0$.
	So assume that $p$ is not an extreme point of $S$. %
	If $p$ lies close enough to a convex hull facet $f$ of $S$, then $p$ is contained in exactly all polytopes that are incident to~$f$ and empty in $S\setminus\{p\}$.
	Hence, $\ASum[1]^p(S) = \ASum[0](S\setminus\{p\};f) = 1$ by Lemma~\ref{facetlemma}. %
	Otherwise, if $p$ is located arbitrarily, consider a continuous path from $p$ to a position close enough to a convex hull facet of $S$ and move $p$ along this path.
	The path can be chosen such that it avoids all lower dimensional elements in the hyperplane arrangement spanned by $S\setminus\{p\}$ and lies inside the convex hull of~$S$. 
	During this movement, $\ASum[1]^p(S)$ can only change when $p$ crosses a facet~$f$ spanned by $d$ points of $S$. 
	Further, changes can only occur from changing amounts of polytopes that have $f$ as a facet. Similar to $\Xk(S;f)$, let $\Xk(S;f^-)$ be the number of empty convex $k$-vertex polytopes that have $f$ as a boundary facet and lie on the negative side of $f$. 
	When $p$ is moved through $f$ (from its positive to its negative side), 
	then the alternating sum of polytopes that $p$ ``stops being inside'' is $\ASum[0](S\setminus\{p\};f) = 1$, 
	and the alternating sum of polytopes that $p$ ``starts being inside'' is $\ASum[0](S\setminus\{p\};f^-) = 1$ (note that $f$ is not a convex hull facet of~$S$). 
	Hence, the value $\ASum[1]^p(S)$ is the same for all possible positions of $p$ on the path, including the final position for which we already showed $\ASum[1]^p(S) = 1$. 
\end{proof}

Likewise, Theorem~\ref{thm:one_point} generalizes to $\R^d$, with an analogous proof as in Section~\ref{sec:alt_sums}.

\begin{theorem}\label{thm:one_point_rd}
	Given a set $S$ of $n\geq d+1$ points in general position in $\R^d$, $h$ of them extreme, it holds that $\ASum[1](S) = n-h$.
\end{theorem}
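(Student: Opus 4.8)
\textbf{Proof plan for Theorem~\ref{thm:one_point_rd}.}
The plan is to mimic exactly the argument used for Theorem~\ref{thm:one_point} in the planar case, now that all the required ingredients have been lifted to $\R^d$. The key observation is that every convex polytope counted in $\ASum[1](S)$ has exactly one point of $S$ in its interior, so the polytopes counted are partitioned according to \emph{which} point lies inside. Concretely, for each $p \in S$ the polytopes with $\ell=1$ whose interior point is $p$ are precisely those counted in $\ASum[1]^p(S)$, and these classes are disjoint and exhaust all polytopes contributing to $\ASum[1](S)$. Hence, matching the alternating signs term by term,
\[
	\ASum[1](S) \;=\; \sum_{k\geq d+1} (-1)^{k+d+1} \Xkl[k,1](S) \;=\; \sum_{p\in S} \ASum[1]^p(S).
\]

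The second and final step is to evaluate the right-hand side using Lemma~\ref{lem:one_fixed_point_rd}: each of the $h$ extreme points of $S$ contributes $0$, and each of the $n-h$ interior points contributes $1$. Therefore $\ASum[1](S) = n - h$, as claimed.

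Since the heavy lifting has already been done in Lemma~\ref{lem:one_fixed_point_rd} (which in turn relies on Lemma~\ref{facetlemma} and the continuous motion argument in $\R^d$), there is essentially no obstacle here; the only point requiring a word of care is the bookkeeping of the alternating sign $(-1)^{k+d+1}$, which in dimension $d$ replaces the planar $(-1)^{k+1}$. This sign is the same on both sides of the displayed identity and is built into the definition of $\ASum[1]^p(S)$, so the partition-by-interior-point argument goes through verbatim. One should only make sure that when $p$ is the unique interior point of a $k$-polytope, the indexing $k \geq d+1$ is consistent, which it is, since any full-dimensional polytope spanned by $S$ has at least $d+1$ vertices.
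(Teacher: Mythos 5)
Your proof is correct and is exactly the argument the paper intends: partition the polytopes counted in $\ASum[1](S)$ by their unique interior point and apply Lemma~\ref{lem:one_fixed_point_rd}, just as Theorem~\ref{thm:one_point} is proved in the plane. The paper gives no separate proof beyond noting the argument is analogous, so your write-up (including the careful remark about the $(-1)^{k+d+1}$ sign convention) matches it in full.
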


However, the results for $\ASum[1]^p(S;e)$ and $\ASum[1](S;e)$ do not carry over, 
as the cyclic ordering of the remaining points around $p$ in the proof of Lemma~\ref{one_fixed_edge_one_fixed_point} 
does not have a direct higher dimensional counterpart.

\subsection{Weighted sums}\label{subsec:weighted_sums_rd}

We next consider higher dimensional versions of the results from Section~\ref{sec:weighted_sums}. 
In the following, we denote by $\Xkl(S;f)$ the number of convex $k$-vertex polytopes with $\ell$ interior points, with vertices and interior points a subset of $S$, that have $f$ as a boundary facet and lie to the positive side of $f$. We denote
by $\Xkl(S;p)$ the number of convex $k$-vertex polytopes with $\ell$ interior points that have $p$ on their boundary.
The notation is again generalized to more required boundary elements by listing them all after $S$.
For example, $\Xkl(S;f,p_1,p_2,p_3)$ denotes the number of convex $k$-vertex polytopes with $\ell$ interior points that have $f$, $p_1$, $p_2$, and $p_3$ on their boundary.

Similar to Lemma~\ref{lem:one_fixed_point}, Theorem~\ref{thm:fkl} can directly be generalized to higher dimensions.

\begin{theorem}\label{thm:fkl_rd}
	For any function $\fkl{k}{\ell}$ that fulfills the equation
	\begin{equation} \label{equ:fkl_rd}
		\fkl{k}{\ell} = \fkl{k+1}{\ell-1} + \fkl{k}{\ell-1},
	\end{equation}
	the sum $\FSum(S)= \sum_{\ell\geq 0} \sum_{k\geq 0} \fkl{k}{\ell} \Xkl(S)$
	is invariant over all sets $S$ of $n\geq d+1$ points in general position in $\R^d$, %
	that is, $\FSum(S)$ only depends on the cardinality of $S$ and the dimension $d$.
\end{theorem}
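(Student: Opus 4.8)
The plan is to mimic the proof of Theorem~\ref{thm:fkl} verbatim, since the combinatorial content is dimension-independent once the right local move is identified. The key observation is that in $\R^d$, the analogue of "a point $r$ crossing the edge $pq$" is a point $r$ crossing a facet $\sigma$ spanned by $d$ points of $S$ — equivalently, $d+1$ points of $S$ becoming affinely dependent, with one of them, call it $r$, passing from the positive to the negative side of the hyperplane spanned by the other $d$. First I would invoke the continuous motion argument in $\R^d$ (as used in~\cite{prs_oecp_06} and cited in the introduction): it suffices to fix the value of $\FSum(S)$ when such a single "flip" occurs, because any two point sets in general position in $\R^d$ are connected by a generic motion whose only combinatorial events are these flips.

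Next I would analyze which polytopes can change during such a flip. When $d+1$ points become coplanar with $r$ between the facet $\sigma=\{p_1,\dots,p_d\}$ and the rest, the only convex polytopes spanned by $S$ whose vertex/interior structure changes are: (i) those having $\sigma$ as a boundary facet with $r$ in their interior, and (ii) the polytope on the other side, namely $\mathrm{conv}(\sigma\cup\{r\})$ viewed as a $k$-vertex polytope for the relevant $k$. Concretely, take any convex $k$-vertex polytope $Q$ with $\ell$ interior points that has $\sigma$ as a facet and contains $r$ in its interior. When $r$ crosses $\sigma$ to the outside of $Q$, the polytope $Q$ loses $r$ as an interior point, so it now has $k$ vertices and $\ell-1$ interior points; simultaneously the polytope $Q'=\mathrm{conv}(V(Q)\cup\{r\})$ becomes convex, with $k+1$ vertices and $\ell-1$ interior points. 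So each polytope counted in $\Xkl[k,\ell]$ that disappears is put in bijection with one polytope appearing in $\Xkl[k,\ell-1]$ and one appearing in $\Xkl[k+1,\ell-1]$. Since $f(k,\ell)=f(k+1,\ell-1)+f(k,\ell-1)$ by Equation~(\ref{equ:fkl_rd}), the contribution to $\FSum(S)$ is unchanged; the symmetric direction ($r$ moving into $Q$) is identical with signs/roles reversed.

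Having established invariance, I would evaluate $\FSum(S)$ on a convenient configuration — a set of $n\geq d+1$ points in \emph{convex position} in $\R^d$ — to get the explicit dependence on $n$ and $d$. For such a set, every subset of $k$ points spans an empty $k$-vertex polytope, so $\Xkl[k,0](S)=\binom{n}{k}$ for $k\geq d+1$ and $\Xkl[k,\ell](S)=0$ for $\ell\geq 1$, giving $\FSum(S)=\sum_{k\geq d+1} f(k,0)\binom{n}{k}$, a quantity depending only on $n$ and $d$ (the latter through the lower summation bound). This finishes the proof.

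The only real subtlety — and the step I would be most careful about — is the geometric claim that during a generic motion, the \emph{sole} combinatorial events affecting $\FSum(S)$ are facet crossings of the above type, and that for each such event the bijection between appearing and disappearing polytopes is exactly two-to-one in the way described (one into $\Xkl[k,\ell-1]$, one into $\Xkl[k+1,\ell-1]$). In $\R^2$ this is transparent because a "facet" is just an edge and "$r$ between $p$ and $q$" is unambiguous; in $\R^d$ one must check that when $r$ lies in the relative interior of $\sigma=\mathrm{conv}\{p_1,\dots,p_d\}$ at the flip instant, replacing the facet $\sigma$ of $Q$ by the $d$ facets of $\mathrm{conv}(\sigma\cup\{r\})$ incident to $r$ indeed yields a convex polytope $Q'$ with exactly one more vertex, and that no \emph{other} polytope spanned by $S$ is affected. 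This is standard for the continuous motion method in $\R^d$ and is exactly the local picture underlying the proofs of~\cite{prs_oecp_06} that are invoked earlier, so I would state it explicitly and refer to that argument rather than reprove it from scratch.
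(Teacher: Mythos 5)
Your proposal is correct and follows essentially the same route as the paper's proof: a continuous motion argument in $\R^d$ where the only relevant events are a point crossing (the relative interior of) a facet spanned by $d$ other points, with each disappearing polytope counted in $X_{k,\ell}$ matched to one appearing polytope in $X_{k,\ell-1}$ and one in $X_{k+1,\ell-1}$, so that Equation~(\ref{equ:fkl_rd}) forces $\FSum(S)$ to be unchanged. Your additional evaluation on a convex-position point set is not part of the paper's proof of this theorem (invariance alone already gives the claim), but it is harmless and is exactly what the paper does afterwards in the corollaries.
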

\begin{proof}
	Consider a point set $S$ in general position, an arbitrary point $p\in S$, and the numbers $\Xkl^p(S)$ of polytopes in $S$ that contain $p$ in the interior.
	When continuously moving the points of $S$, the values $\Xkl(S)$ change exactly when a point $p$ crosses a facet $f$ spanned by points of~$S$, from the positive to the negative side, or vice versa. Consider such a change and denote the resulting point set by $S'$. 
	Assume without loss of generality that $p$ is to the positive side of $f$ in~$S$ and to the negative side of $f$ in $S'$.
	Note that in $S$, all polytopes with $f$ on their boundary and lying on the positive side of $f$ have $p$ in their interior 
	(except for the empty simplex containing $p$ and $f$, which exists before and after the change). 
	Further, for every $\ell\geq 1$ and every polytope (with facets $\{f,f_1,\ldots,f_r\}$) counted in $\Xkl(S;f)$, 
	we have exactly one polytope that is counted in $\Xkl[k,\ell-1](S';f)$ (informally, the same polytope, just without $p$ in its interior), 
	and one polytope counted in $\Xkl[k,\ell-1](S';f_1,\ldots,f_{r},p)$ (informally, the same polytope, just with $p$ added on the boundary outside $f$).
	Symmetrically, for every polytope in $S'$ that has $f$ on its boundary and $p$ in the interior, 
	there are two polytopes in $S$ with one point less in the interior and zero and one point more, respectively, on the boundary.
	As $\fkl{k}{\ell} = \fkl{k+1}{\ell-1} + \fkl{k}{\ell-1}$, this implies that no such point move changes the sum $\FSum(S)$.
\end{proof}

The proofs of the following formulae all go along the same lines as their planar counterparts and are omitted.

\begin{corollary}\label{cor:is_rd}
	For any set $S$ of $n\geq d+1$ points in general position in $\R^d$, it holds that  
	\[  \sum_{k=d+1}^{n} \sum_{\ell=0}^{n-d-1} 2^\ell \Xkl(S) = 2^n - \sum_{k=0}^d \binom{n}{k}. \]
\end{corollary}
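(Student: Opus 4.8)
The plan is to apply Theorem~\ref{thm:fkl_rd} with the weight function $f(k,\ell) = 2^\ell$ and then evaluate the resulting invariant on a point set in convex position. First I would verify that $f(k,\ell)=2^\ell$ satisfies the defining recurrence~(\ref{equ:fkl_rd}): indeed $2^\ell = 2^{\ell-1} + 2^{\ell-1} = f(k+1,\ell-1) + f(k,\ell-1)$, so the hypothesis of Theorem~\ref{thm:fkl_rd} is met and the sum $\sum_{k\geq d+1}\sum_{\ell\geq 0} 2^\ell \Xkl(S)$ depends only on $n$ and $d$. Note the index range: any convex $k$-vertex polytope with $\ell$ interior points spans $k+\ell \leq n$ points, so $k$ runs from $d+1$ to $n$ and $\ell$ from $0$ to $n-d-1$, matching the statement.

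Next I would evaluate the invariant on a concrete configuration, namely a set $S$ of $n$ points in convex position in $\R^d$ (for instance points on the moment curve, or any $n$ points forming the vertex set of a convex polytope with every subset in convex position). For such a set, every subset of $k\geq d+1$ points spans a convex $k$-vertex polytope, none of these polytopes contains any point of $S$ in its interior, so $\Xkl(S)=0$ for $\ell\geq 1$ and $\Xkl[k,0](S) = \binom{n}{k}$ for $d+1 \leq k \leq n$. Plugging this in,
\[
\sum_{k=d+1}^{n}\sum_{\ell=0}^{n-d-1} 2^\ell \Xkl(S) = \sum_{k=d+1}^n 2^0 \binom{n}{k} = \sum_{k=d+1}^n \binom{n}{k} = 2^n - \sum_{k=0}^{d}\binom{n}{k},
\]
using $\sum_{k=0}^n \binom{n}{k} = 2^n$. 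Since the left-hand side is invariant over all $n$-point sets in general position in $\R^d$ by Theorem~\ref{thm:fkl_rd}, the identity holds for every such set, which is exactly the claim.

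The only point requiring a little care — and the closest thing to an obstacle — is making sure that a point set in convex position in $\R^d$ really does make all the ``error'' terms vanish, i.e.\ that it is legitimate to use it as the base configuration: one needs $n$ points in general position (no $d+1$ on a hyperplane) that are moreover in convex position so that no point lies in the interior of the convex hull of any subset. The moment curve $\{(t, t^2, \ldots, t^d) : t \in \R\}$ supplies such points (it is the cyclic polytope construction), so this is routine. Everything else is a direct transcription of the planar argument in Corollary~\ref{cor:is}, as the paper itself indicates by saying these proofs ``all go along the same lines as their planar counterparts.''
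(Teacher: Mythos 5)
Your proposal is correct and follows exactly the route the paper intends: the paper omits this proof, stating that it goes along the same lines as the planar Corollary~\ref{cor:is}, which is precisely what you do (verify that $f(k,\ell)=2^\ell$ satisfies the recurrence, then evaluate on a set in convex position in $\R^d$ such as points on the moment curve, where $X_{k,0}=\binom{n}{k}$ and $X_{k,\ell}=0$ for $\ell\geq 1$). Your extra care about the base configuration in $\R^d$ is a welcome addition rather than a deviation.
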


\begin{corollary}\label{cor:bms_rd}
	For any set $S$ of $n\geq d+1$ points in general position in $\R^d$ and every integer $d+1 \leq m \leq n$ it holds that  
	\[  \sum_{k=d+1}^{m} \sum_{\ell=m-k}^{n-k} \binom{\ell}{m-k} \Xkl(S) = \binom{n}{m}.\]
\end{corollary}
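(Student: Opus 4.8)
The plan is to mimic the planar proof of Corollary~\ref{cor:bms} exactly, since nothing in that argument is intrinsically two-dimensional. First I would fix an integer $m$ with $d+1\leq m\leq n$ and set $\fkl{k}{\ell} := \binom{\ell}{m-k}$, regarding $m$ as a constant. The key step is to verify that this $f$ satisfies the recurrence~(\ref{equ:fkl_rd}), which is just Pascal's rule:
\[
 \fkl{k}{\ell} = \binom{\ell}{m-k} = \binom{\ell-1}{m-k-1} + \binom{\ell-1}{m-k} = \fkl{k+1}{\ell-1} + \fkl{k}{\ell-1}.
\]
By Theorem~\ref{thm:fkl_rd}, the sum $\FSum(S) = \sum_{k\geq 0}\sum_{\ell\geq 0}\binom{\ell}{m-k}\Xkl(S)$ therefore depends only on $n$ and $d$; note that the terms with $k<d+1$ or with $k>m$ (where $\binom{\ell}{m-k}=0$ since $m-k<0$) or with $\ell<m-k$ vanish, so the sum is exactly $\sum_{k=d+1}^{m}\sum_{\ell=m-k}^{n-k}\binom{\ell}{m-k}\Xkl(S)$.

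To pin down the value, I would evaluate $\FSum(S)$ on a set $S$ of $n$ points in convex position in $\R^d$. For such a set every subset of $\{d+1,\dots\}$ points spans an empty convex polytope, so $\Xkl[k,0](S)=\binom{n}{k}$ and $\Xkl(S)=0$ for $\ell\geq 1$. Hence only the $\ell=0$ terms survive, and among those only $k=m$ contributes (because $\binom{0}{m-k}$ is $1$ if $k=m$ and $0$ otherwise), giving
\[
 \FSum(S) = \binom{0}{0}\Xkl[m,0](S) = \binom{n}{m},
\]
which is the claimed identity.

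The only thing requiring a moment's care — and the place I would double-check — is that a set of $n$ points in convex position genuinely exists in $\R^d$ and that all its $\binom{n}{k}$ many $k$-subsets span empty convex $k$-vertex polytopes; this is standard (e.g.\ points on the moment curve, or on a sphere), so I do not expect a real obstacle here. More generally, since the authors explicitly state ``The proofs of the following formulae all go along the same lines as their planar counterparts and are omitted,'' the intended proof is precisely this two-line adaptation, and there is no hidden difficulty: Theorem~\ref{thm:fkl_rd} does all the heavy lifting, and the rest is the Pascal identity plus the convex-position evaluation. If one wanted to be fully explicit, one could also remark that the index ranges in the statement are exactly the ranges on which $\binom{\ell}{m-k}\Xkl(S)$ can be nonzero for an $n$-point set, which is why the displayed double sum equals the unrestricted $\FSum(S)$ of Theorem~\ref{thm:fkl_rd}.
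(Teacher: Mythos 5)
Your proof is correct and is precisely the adaptation the paper intends: the authors omit the proof of Corollary~\ref{cor:bms_rd} with the remark that it goes along the same lines as the planar Corollary~\ref{cor:bms}, and your argument (Pascal's rule to verify the recurrence, Theorem~\ref{thm:fkl_rd} for invariance, evaluation on a cyclic-polytope-type convex-position set where $X_{k,0}=\binom{n}{k}$ and $X_{k,\ell}=0$ for $\ell\geq 1$) is exactly that adaptation.
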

\begin{corollary}\label{cor:fib1_rd}
For any set $S$ of $n\geq d+1$ points in general position in $\R^d$, it holds that  
	\[\sum_{k=d+1}^{n} \sum_{\ell=0}^{n-d-1} \Fib(k+2\ell) X_{k,\ell}(S) = \Fib(2n)- \sum_{k=0}^{d} \Fib(k) \binom{n}{k}.\]
\end{corollary}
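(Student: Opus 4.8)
The plan is to invoke Theorem~\ref{thm:fkl_rd} with the weight function $f(k,\ell) := \Fib(k+2\ell)$, exactly as in the planar Corollary~\ref{cor:fib1}. First I would check that this $f$ satisfies the recurrence~(\ref{equ:fkl_rd}). Setting $m := k+2\ell$, the defining relation of the Fibonacci numbers gives $\Fib(m) = \Fib(m-1)+\Fib(m-2)$, i.e.\ $\Fib(k+2\ell) = \Fib\bigl((k+1)+2(\ell-1)\bigr) + \Fib\bigl(k+2(\ell-1)\bigr)$, which is precisely $f(k,\ell) = f(k+1,\ell-1) + f(k,\ell-1)$; this holds for every $\ell \geq 1$ (indeed for all integers, since $\Fib$ is extended to $\mathbb{Z}$). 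Hence Theorem~\ref{thm:fkl_rd} applies, and the sum $F(S) = \sum_{k=d+1}^{n}\sum_{\ell=0}^{n-k} \Fib(k+2\ell)\,X_{k,\ell}(S)$ depends only on $n$ and $d$.

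Next I would evaluate $F(S)$ on a convenient configuration, namely a set $S$ of $n$ points in convex position in $\R^d$. For such a set, every $k$-subset with $k \geq d+1$ spans an empty convex $k$-vertex polytope and no polytope has an interior point, so $X_{k,0}(S) = \binom{n}{k}$ for $d+1 \leq k \leq n$ and $X_{k,\ell}(S) = 0$ whenever $\ell \geq 1$. Therefore $F(S) = \sum_{k=d+1}^{n} \Fib(k)\binom{n}{k}$. Applying the identity $\sum_{k=0}^{n}\Fib(k)\binom{n}{k} = \Fib(2n)$ (see~\cite{spivey}) then yields $F(S) = \Fib(2n) - \sum_{k=0}^{d}\Fib(k)\binom{n}{k}$, which is the claimed value. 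Since in $\R^d$ a convex polytope counted by $X_{k,\ell}$ has $k \geq d+1$ vertices and its $k+\ell \leq n$ defining points force $0 \leq \ell \leq n-d-1$, the index ranges written in the statement capture exactly the possibly nonzero terms, so this $F(S)$ is the left-hand side of the corollary.

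I do not expect a genuine obstacle: this is the $\R^d$ transcription of Corollary~\ref{cor:fib1}, and Theorem~\ref{thm:fkl_rd} — whose proof already carries the continuous-motion argument through dimension $d$ — does the heavy lifting. The only points worth a line of care are that the convention ``$X_{k,\ell}$ counts only full-dimensional $k$-vertex polytopes, hence $k\geq d+1$'' is used when evaluating the convex-position case (so that no degenerate low-vertex ``polytopes'' are over- or under-counted), and that the correction term $\sum_{k=0}^{d}\Fib(k)\binom{n}{k}$ — which reduces to $n+\binom{n}{2}$ when $d=2$, because $\Fib(0)=0$, $\Fib(1)=1$, $\Fib(2)=1$ — is simply the part of the range of the binomial identity that is \emph{not} realized by polytopes in $\R^d$.
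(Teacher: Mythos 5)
Your proposal is correct and matches the paper's (omitted) proof exactly: the paper states that the higher-dimensional corollaries are proved "along the same lines as their planar counterparts," i.e.\ by verifying the recurrence for $f(k,\ell)=\Fib(k+2\ell)$, invoking Theorem~\ref{thm:fkl_rd}, and evaluating on a convex-position set via $\sum_{k=0}^{n}\Fib(k)\binom{n}{k}=\Fib(2n)$. Your sanity checks on the index ranges and the $d=2$ specialization are consistent with Corollary~\ref{cor:fib1}.
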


\begin{corollary}\label{cor:fib2_rd}
For any set $S$ of $n\geq d+1$ points in general position in $\R^d$, it holds that  
	\[
		\sum_{k=d+1}^{n} \sum_{\ell=0}^{n-d-1} (-1)^{k+\ell}\Fib(k-\ell) X_{k,\ell}(S) 
		=-\Fib(n) + \sum_{k=0}^{d} (-1)^{k+1}  \Fib(k) \binom{n}{k}.
	\]
\end{corollary}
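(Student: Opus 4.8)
The plan is to mirror the planar proof of Corollary~\ref{cor:fib2}, using the higher-dimensional invariance result Theorem~\ref{thm:fkl_rd} in place of its planar analogue. First I would set $f(k,\ell) := (-1)^{k+\ell}\Fib(k-\ell)$ and check that it satisfies the recurrence~(\ref{equ:fkl_rd}). This is a one-line computation:
\[
f(k+1,\ell-1)+f(k,\ell-1) = (-1)^{k+\ell}\Fib(k-\ell+2) + (-1)^{k+\ell-1}\Fib(k-\ell+1) = (-1)^{k+\ell}\bigl(\Fib(k-\ell+2)-\Fib(k-\ell+1)\bigr) = (-1)^{k+\ell}\Fib(k-\ell),
\]
where the penultimate equality is the Fibonacci recurrence $\Fib(m+2)=\Fib(m+1)+\Fib(m)$ read backwards. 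By Theorem~\ref{thm:fkl_rd}, the sum $\FSum(S)=\sum_{k\ge d+1}\sum_{\ell\ge 0} f(k,\ell)\Xkl(S)$ therefore depends only on $n$ and the dimension $d$.

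Next I would evaluate $\FSum(S)$ on a convenient configuration, namely a set $S$ of $n$ points in convex position in $\R^d$ (for instance, $n$ points on the moment curve). For such a set, every $k$-subset with $k\ge d+1$ spans a full-dimensional convex polytope with exactly $k$ vertices and no point of $S$ in its interior, while no convex polytope of $S$ has interior points at all. Hence $X_{k,0}(S)=\binom{n}{k}$ for $d+1\le k\le n$ and $\Xkl(S)=0$ for every $\ell\ge 1$. Substituting, $\FSum(S) = \sum_{k=d+1}^{n} (-1)^{k}\Fib(k)\binom{n}{k}$.

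Finally I would invoke the binomial--Fibonacci identity $\sum_{k=0}^{n}(-1)^{k}\Fib(k)\binom{n}{k} = -\Fib(n)$ (see~\cite{spivey}), already used in the planar Corollary~\ref{cor:fib2}, to peel off the low-order terms:
\[
\sum_{k=d+1}^{n}(-1)^{k}\Fib(k)\binom{n}{k} = -\Fib(n) - \sum_{k=0}^{d}(-1)^{k}\Fib(k)\binom{n}{k} = -\Fib(n) + \sum_{k=0}^{d}(-1)^{k+1}\Fib(k)\binom{n}{k},
\]
which is exactly the claimed right-hand side, completing the proof.

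There is no deep obstacle here, since both ingredients — the invariance theorem and the summation identity — are already available; the only points requiring care are the sign bookkeeping in verifying that $f$ satisfies~(\ref{equ:fkl_rd}), and the justification that on a convex-position point set in $\R^d$ one has $X_{k,0}(S)=\binom{n}{k}$ for $k\ge d+1$ and no polytope with interior points. That last fact is what reduces the evaluation step to a pure binomial sum, and it is the one place where the dimension $d$ (rather than just the planar case) enters the argument.
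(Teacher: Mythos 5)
Your proposal is correct and follows exactly the route the paper intends: the paper omits the proof of Corollary~\ref{cor:fib2_rd}, stating only that it ``goes along the same lines as its planar counterpart'' (Corollary~\ref{cor:fib2}), which is precisely what you carry out — verifying the recurrence~(\ref{equ:fkl_rd}) for $f(k,\ell)=(-1)^{k+\ell}\Fib(k-\ell)$, invoking Theorem~\ref{thm:fkl_rd}, and evaluating on a convex-position set via the identity $\sum_{k=0}^{n}(-1)^{k}\Fib(k)\binom{n}{k}=-\Fib(n)$. Your added care about why $X_{k,0}(S)=\binom{n}{k}$ and $X_{k,\ell}(S)=0$ for $\ell\geq 1$ on the moment curve is a detail the paper leaves implicit, but it is not a deviation in approach.
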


\begin{corollary}\label{cor_cheby_rd}
	For any set $S$ of $n\geq d+1$ points in general position in $\R^d$, it holds that 
\begin{equation*}\label{equ:ChebyT_rd}
\sum_{k=d\!+\!1}^{n} \sum_{\ell=0}^{n\!-\!d\!-\!1} 2\cos\left(\frac{(2k+\ell)\pi}{3}\right) X_{k,\ell}(S) 
	= 2 \cos\left(\frac{n \pi}{3}\right)  - 2\sum_{k=0}^{d} \binom{n}{k}\cos\left(\frac{2k\pi}{3}\right), \ \mbox{and} 
\end{equation*}
\begin{equation*}\label{equ:ChebyU_rd}
\sum_{k=d\!+\!1}^{n} \sum_{\ell=0}^{n\!-\!d\!-\!1} \frac{2}{\sqrt{3}}\sin\left(\frac{(2k+\ell)\pi}{3} \right) X_{k,\ell}(S) 
	= \frac{2}{\sqrt{3}}\sin\left(\frac{n \pi}{3}\right) 
	  - \frac{2}{\sqrt{3}} \sum_{k=0}^{d} \binom{n}{k}\sin\left(\frac{2k \pi}{3}\right).
\end{equation*}
\end{corollary}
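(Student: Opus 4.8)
The plan is to apply Theorem~\ref{thm:fkl_rd} to the same two weight functions used in the planar case, namely $f_1(k,\ell) := 2\cos\left(\frac{(2k+\ell)\pi}{3}\right)$ and $f_2(k,\ell) := \frac{2}{\sqrt{3}}\sin\left(\frac{(2k+\ell)\pi}{3}\right)$. In the proof of Corollary~\ref{cor:cheby} these functions were already shown (via the recurrences for the Chebyshev polynomials of the first and second kind evaluated at $x=\tfrac12$) to satisfy $f(k,\ell) = f(k+1,\ell-1) + f(k,\ell-1)$, which is precisely Equation~(\ref{equ:fkl_rd}); that verification is purely algebraic and does not involve the dimension, so it carries over verbatim. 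Hence, by Theorem~\ref{thm:fkl_rd}, each of the sums $\sum_{k}\sum_{\ell} f_i(k,\ell)\Xkl(S)$ depends only on $n$ and $d$, and it suffices to evaluate it on one convenient configuration.

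As in the planar argument, I would take $S$ to be a set of $n$ points in convex position in $\R^d$, i.e., with all $n$ points being vertices of the convex hull of $S$. For such $S$ one has $\Xkl(S) = 0$ whenever $\ell \geq 1$: any vertex $p$ of the convex hull of $S$ is strictly separated from $S\setminus\{p\}$ by a hyperplane, so $p$ lies outside the convex hull of every subset of $S$ not containing it, and thus no $k$-vertex polytope spanned by $S$ contains a point of $S$ in its interior. Moreover, by general position any $k \geq d+1$ points of $S$ are in convex position and affinely span $\R^d$, hence span a full-dimensional empty $k$-vertex polytope all of whose $k$ points are vertices; therefore $\Xkl[k,0](S) = \binom{n}{k}$ for $d+1 \le k \le n$. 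Consequently $F_i(S) = \sum_{k=d+1}^n f_i(k,0)\binom{n}{k}$.

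It then remains to split off the low-order terms and evaluate a closed binomial sum. Writing $\sum_{k=d+1}^n f_i(k,0)\binom{n}{k} = \sum_{k=0}^n f_i(k,0)\binom{n}{k} - \sum_{k=0}^d f_i(k,0)\binom{n}{k}$ and applying the identities $\sum_{k=0}^n \binom{n}{k}\cos(ky) = 2^n\cos\left(\frac{ny}{2}\right)\left(\cos\frac{y}{2}\right)^n$ and $\sum_{k=0}^n \binom{n}{k}\sin(ky) = 2^n\sin\left(\frac{ny}{2}\right)\left(\cos\frac{y}{2}\right)^n$ from~\cite{binomial} with $y = \frac{2\pi}{3}$ (so that $\cos\frac{y}{2} = \frac12$) yields $2\cos\left(\frac{n\pi}{3}\right)$, respectively $\frac{2}{\sqrt{3}}\sin\left(\frac{n\pi}{3}\right)$, for the two full sums, which gives exactly the two claimed formulas. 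The only genuine difference from the planar proof is the bookkeeping of the lower summation limit, $k = d+1$ instead of $k = 3$, so the correction term becomes $\sum_{k=0}^d$ rather than $\sum_{k=0}^2$. I expect the only (minor) obstacle to be making the convex-position evaluation in $\R^d$ fully rigorous — in particular justifying that every $k$-subset with $k \ge d+1$ spans a full-dimensional empty $k$-vertex polytope and that no such polytope contains interior points — while everything else is a direct transcription of Corollary~\ref{cor:cheby}.
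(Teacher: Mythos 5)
Your proposal is correct and is exactly the argument the paper intends: the paper explicitly omits this proof, stating that it "goes along the same lines as its planar counterpart," and your write-up is precisely that transcription — verify Equation~(\ref{equ:fkl_rd}) for $f_1,f_2$ via the Chebyshev recurrences (dimension-independent), invoke Theorem~\ref{thm:fkl_rd}, and evaluate on a convex-position set in $\R^d$ with the correction sum running to $k=d$ instead of $k=2$. Your added justification that $X_{k,0}(S)=\binom{n}{k}$ and $X_{k,\ell}(S)=0$ for $\ell\geq 1$ in convex position is sound and fills in the only detail the paper leaves implicit.
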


\begin{corollary}\label{all_rd}
For any set $S$ of $n\geq d+1$ points in general position in $\R^d$ and for any $x \in \R$, it holds that
\begin{equation}
\sum_{k=d+1}^n \sum_{\ell=0}^{n-k} x^k \left(1+x\right)^\ell X_{k,\ell} = \left(1+x\right)^n -\sum_{k=0}^{d} x^k \binom{n}{k}.
\label{eq:ps_rd}
\end{equation}
\end{corollary}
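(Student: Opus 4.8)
The plan is to mimic the proof of Corollary~\ref{cor:ps}, replacing Theorem~\ref{thm:fkl} by its $\R^d$ analogue Theorem~\ref{thm:fkl_rd} and replacing ``points in convex position in the plane'' by ``points in convex position in $\R^d$'' as the reference configuration.

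First I would take $f(k,\ell) := x^k(1+x)^\ell$ for the fixed real number $x$ and check that it satisfies the recurrence~(\ref{equ:fkl_rd}): indeed
\[
 f(k+1,\ell-1) + f(k,\ell-1) = x^{k+1}(1+x)^{\ell-1} + x^k(1+x)^{\ell-1} = x^k(1+x)^{\ell-1}(x+1) = x^k(1+x)^{\ell} = f(k,\ell).
\]
(Equivalently, this $f$ arises from the dimension-independent Theorem~\ref{thm:general} by setting $f(k+i,0)=x^{k+i}$, since $\sum_{i=0}^{\ell}\binom{\ell}{i}x^{k+i}=x^k(1+x)^\ell$.) By Theorem~\ref{thm:fkl_rd}, the value
\[
 F(S) = \sum_{k\ge d+1}\sum_{\ell\ge 0} x^k(1+x)^\ell\, X_{k,\ell}(S)
\]
then depends only on $n$ and $d$; here the terms with $k\le d$ are absent because a convex $k$-vertex polytope in $\R^d$ requires $k\ge d+1$.

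It then remains to evaluate $F(S)$ on a single convenient configuration, namely a set $S$ of $n$ points in convex position in $\R^d$. For such $S$, any $k$ points with $k\ge d+1$ are affinely spanning (by general position, any $d+1$ of them are affinely independent), and they span an \emph{empty} convex $k$-vertex polytope, since convex position forbids any point of $S$ from lying in the convex hull of the others; hence $X_{k,0}(S)=\binom{n}{k}$ and $X_{k,\ell}(S)=0$ for $\ell\ge 1$. Therefore, by the binomial theorem,
\[
 F(S) = \sum_{k=d+1}^{n} x^k\binom{n}{k} = \sum_{k=0}^{n} x^k\binom{n}{k} - \sum_{k=0}^{d} x^k\binom{n}{k} = (1+x)^n - \sum_{k=0}^{d} x^k\binom{n}{k},
\]
which is exactly the claimed identity (with the convention $0^0=1$ when $x=-1$, as in Corollary~\ref{cor:ps}).

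Given the machinery already in place, this is essentially routine and I do not expect a genuine obstacle; the two points needing a little care are confirming that Theorem~\ref{thm:fkl_rd} applies to this $f$ (it does, for every fixed $x\in\R$) and the combinatorial fact that for points in convex position in $\R^d$ every $k$-subset with $k\ge d+1$ yields an empty, full-dimensional polytope with exactly $k$ vertices --- the place where general position and convex position are both used.
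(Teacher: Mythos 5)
Your proposal is correct and follows exactly the route the paper intends: the paper omits the proof with the remark that it ``goes along the same lines as its planar counterpart,'' i.e.\ verify that $f(k,\ell)=x^k(1+x)^\ell$ satisfies the recurrence of Theorem~\ref{thm:fkl_rd} and then evaluate on $n$ points in convex position in $\R^d$, where $X_{k,0}=\binom{n}{k}$ and $X_{k,\ell}=0$ for $\ell\geq 1$. Your verification of the recurrence, the reduction via Theorem~\ref{thm:general}, and the convex-position evaluation all match; no gaps.
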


From Equation~(\ref{eq:ps_rd}) the following can be deduced, analogous to Proposition~\ref{prop:indy} and Corollary~\ref{cor:last}. 
\begin{proposition}\label{prop:indy_rd}
For any set $S$ of $n$ points in general position in $\R^d$, the maximum number of linearly independent equations 
$F_j(S) =\sum_{k \geq 3} \sum_{\ell \geq 0} f_j(k,\ell) X_{k,\ell}$, where each $f_j(k,\ell)$ satisfies Equation (\ref{equ:fkl_rd}), in terms of the variables $X_{k,\ell}$ is $n-d$.
\end{proposition}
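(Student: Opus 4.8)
The plan is to follow the argument of Proposition~\ref{prop:indy} almost verbatim, the only change being careful bookkeeping with index ranges. In $\R^d$ every convex polytope has at least $d+1$ vertices, so $X_{k,\ell}(S)=0$ unless $d+1\le k$ and $k+\ell\le n$; hence every sum $F_j(S)$ of the stated form is really a finite sum over the pairs $(k,\ell)$ with $d+1\le k\le n$ and $0\le\ell\le n-k$, and the only variables $X_{k,\ell}$ with $\ell=0$ that can occur are $X_{d+1,0},X_{d+2,0},\dots,X_{n,0}$, that is, exactly $n-d$ of them. This count, $n-d$ rather than $n-2$, is the one place where the higher-dimensional statement differs from the planar one.

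For the upper bound I would form the matrix $M$ whose rows are indexed by candidate coefficient functions $f_j$ and whose columns are indexed by the pairs $(k,\ell)$ above, ordered as in Proposition~\ref{prop:indy}. Since each $f_j$ satisfies Equation~(\ref{equ:fkl_rd}), Theorem~\ref{thm:general} gives $f_j(k,\ell)=\sum_{i=0}^{\ell}\binom{\ell}{i}f_j(k+i,0)$, and because $k+\ell\le n$ all indices $k+i$ appearing here lie in $\{d+1,\dots,n\}$, so the right-hand side only involves the $\ell=0$ columns of $M$. Performing exactly the column operations that subtract $\sum_{i=0}^{\ell}\binom{\ell}{i}$ times the respective $\ell=0$ columns clears every column with $\ell\ge1$ and leaves only the $n-d$ columns corresponding to $X_{d+1,0},\dots,X_{n,0}$. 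Hence $\operatorname{rank}(M)\le n-d$, so at most $n-d$ of the equations can be linearly independent.

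For the matching lower bound I would invoke the $d$-dimensional power sums of Corollary~\ref{all_rd}: for every $x\in\R$ the function $f(k,\ell)=x^k(1+x)^\ell$ satisfies Equation~(\ref{equ:fkl_rd}), so Equation~(\ref{eq:ps_rd}) yields a valid equation whose coefficient of $X_{k,0}$ is $x^k$. Choosing $n-d$ pairwise distinct $x_1,\dots,x_{n-d}\in\R\setminus\{0\}$ and restricting, as in Proposition~\ref{prop:indy2}, to the columns belonging to the variables $X_{k,0}$, the corresponding block has entries $x_j^{k}$ for $k=d+1,\dots,n$; dividing row $j$ by $x_j^{d+1}$ turns it into the $(n-d)\times(n-d)$ Vandermonde matrix with distinct nodes $x_1,\dots,x_{n-d}$, which is nonsingular. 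Thus already the restrictions of these $n-d$ equations to those columns are linearly independent, hence so are the equations themselves, and combined with the upper bound this gives exactly $n-d$.

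I do not expect a genuine obstacle here: the reasoning is entirely a transcription of Proposition~\ref{prop:indy}, Corollary~\ref{cor:last}, and Proposition~\ref{prop:indy2}. The one point that must be stated with care is that the relevant ``$\ell=0$'' variables are $X_{d+1,0},\dots,X_{n,0}$ and that the expansion from Theorem~\ref{thm:general} never leaves this range because $X_{k,\ell}=0$ whenever $k+\ell>n$; everything else is routine.
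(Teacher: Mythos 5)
Your proposal is correct and is essentially the proof the paper intends: the paper omits the argument entirely, stating only that the result "can be deduced, analogous to Proposition~\ref{prop:indy} and Corollary~\ref{cor:last}," and your write-up is precisely that transcription, with the one substantive point (that the surviving $\ell=0$ columns are $X_{d+1,0},\dots,X_{n,0}$, giving $n-d$ rather than $n-2$, and that the expansion from Theorem~\ref{thm:general} stays within this range because $k+\ell\le n$) handled correctly. The Vandermonde lower bound via Corollary~\ref{all_rd} likewise matches the planar Proposition~\ref{prop:indy2} verbatim.
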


\begin{corollary}\label{cor:last_rd}
Let $S$ be a set of $n\geq d+1$ points in general position in $\R^d$.
Any sum $\sum_{k \geq 3} \sum_{\ell \geq 0} f(k,\ell) X_{k,\ell}$, where the function $f(k,\ell)$ fulfills Equation (\ref{equ:fkl_rd}), can be expressed in terms of $n-d$ equations of the form (\ref{eq:ps_rd}) with distinct values $x \in \R$.
\end{corollary}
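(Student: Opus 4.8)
The plan is to transcribe the proof of the planar Corollary~\ref{cor:last} essentially verbatim, replacing $n-2$ by $n-d$ throughout. Write $W$ for the space of linear functionals of the form $F(S)=\sum_{k\geq d+1}\sum_{\ell\geq 0} f(k,\ell)X_{k,\ell}(S)$, regarded as formal linear combinations of the finitely many relevant variables $X_{k,\ell}$ with $d+1\le k\le n$ and $0\le\ell\le n-k$ (all other $X_{k,\ell}$ vanish), where $f$ ranges over the functions satisfying Equation~(\ref{equ:fkl_rd}). Since this recurrence is identical to Equation~(\ref{equ:fkl}), Theorem~\ref{thm:general} applies and each admissible $f$ is, on the relevant range $k+\ell\le n$, determined by its $n-d$ boundary values $f(d+1,0),\dots,f(n,0)$; so $W$ has at most $n-d$ degrees of freedom, and Proposition~\ref{prop:indy_rd} pins down $\dim W=n-d$.

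Next I would record that Corollary~\ref{all_rd} furnishes, for every $x\in\R$, an explicit element $P_x\in W$, namely the functional with weight $f(k,\ell)=x^k(1+x)^\ell=\sum_{i=0}^{\ell}\binom{\ell}{i}x^{k+i}$, which is of the form demanded by Equation~(\ref{eq:rr_general}). The crucial step is the $\R^d$ counterpart of Proposition~\ref{prop:indy2}: any $n-d$ of these functionals at distinct nonzero nodes $x_1,\dots,x_{n-d}$ are linearly independent. As in the planar proof, by the column reduction already used for Proposition~\ref{prop:indy_rd} — each $\ell>0$ column is a fixed linear combination of the $\ell=0$ columns via Equation~(\ref{eq:rr_general}) — it suffices to examine the $(n-d)\times(n-d)$ matrix whose $(j,k)$-entry is $x_j^{\,k}$ for $d+1\le k\le n$; dividing the $j$-th row by $x_j^{\,d+1}$ turns it into a Vandermonde matrix with the distinct nodes $x_1,\dots,x_{n-d}$, hence of full rank $n-d$.

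Finally I would assemble the pieces: $P_{x_1},\dots,P_{x_{n-d}}$ are $n-d$ linearly independent vectors in the $(n-d)$-dimensional space $W$, hence a basis of $W$; therefore every sum $\sum_{k\geq 3}\sum_{\ell\geq 0}f(k,\ell)X_{k,\ell}$ with $f$ obeying Equation~(\ref{equ:fkl_rd}) is a linear combination of the $n-d$ equations~(\ref{eq:ps_rd}) attached to $x_1,\dots,x_{n-d}$, which is the claim. I do not anticipate a real obstacle here: every ingredient is either already proved earlier in the paper or is the word-for-word $d$-dimensional version of a planar argument, and the only care needed is the index bookkeeping ($k$ starting at $d+1$, exactly $n-d$ free boundary values $f(d+1,0),\dots,f(n,0)$, and dividing rows by $x_j^{\,d+1}$ rather than $x_j^{\,3}$ in the Vandermonde reduction).
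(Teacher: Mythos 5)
Your proposal is correct and follows essentially the same route as the paper, which itself only asserts that the corollary ``can be deduced, analogous to Proposition~\ref{prop:indy} and Corollary~\ref{cor:last}'': you combine the dimension bound $n-d$ from Theorem~\ref{thm:general} and Proposition~\ref{prop:indy_rd} with the Vandermonde independence of the functionals $P_{x_1},\dots,P_{x_{n-d}}$ from Equation~(\ref{eq:ps_rd}), exactly as in the planar argument. The index bookkeeping ($k$ from $d+1$, division by $x_j^{d+1}$) is handled correctly.
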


\subsection{Moment sums}\label{subsec:moment_sums}

Recall that Theorem~\ref{thm:one_point_rd} states that for any set $S$ of $n$ points in general position in $\R^d$, 
the alternating sum of the numbers of convex polytopes with one interior point is $\ASum[1](S) = n-h$. 
Combining this with the first moment of the numbers of empty convex polytopes $M_1(S) = \sum_{k=0}^d (-1)^{d-k} k\binom{n}{k} + i$ from~\cite{prs_oecp_06}, 
we observe that also in $\R^d$, the difference $M_1(S) - \ASum[1](S) = \sum_{k=0}^d (-1)^{d-k} k\binom{n}{k}$ 
is a function that only depends on the cardinality $n$ of $S$ and hence is independent of the combinatorics of the underlying point set $S$. 
For $r = 2$ and any point set $S$ in general position in $\R^d$ let 
	\[ M_2(S) := \sum_{k\geq d+1} (-1)^{k+1} \frac{k}{r} \binom{k-r-1}{r-1} \Xk(S) = \sum_{k\geq d+1} (-1)^{k+1} \frac{k(k-3)}{2} \Xk(S) \] 
be the \emph{second alternating moment} of $\{\Xk(S)\}_{k\geq d+1}$. 
Then, Theorem~\ref{thm:moment_sums} can be generalized to~$\R^d$ in the following way.

\begin{theorem}\label{thm:moment_sums_rd}
	For any set $S$ of $n\geq d+1$ points in general position in $\R^d$ and for integers $0\leq r\leq 2$  
	it holds that
	\[
		\FrSum(S) := \sum_{k\geq d+1} \ \sum_{\ell=0}^r   (-1)^{k-\ell+1+d} \mrk[r-\ell]{k-\ell} \Xkl(S) %
		= 
			\left\{\begin{array}{ll}
					\sum\limits_{k=0}^d (-1)^{d-k} \binom{n}{k} 	& \mbox{for}\ r=0 \\
					\sum\limits_{k=0}^d (-1)^{d-k} k\binom{n}{k}	& \mbox{for}\ r=1 \\
					\sum_{k=0}^{d} (-1)^{d+k}\ \frac{k(k-3)}{2}\binom{n}{k}	& \mbox{for}\  r=2.\\ 
			\end{array}\right.
	\]
\end{theorem}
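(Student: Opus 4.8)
The plan is to follow the proof of Theorem~\ref{thm:moment_sums} line by line, replacing its planar ingredients by the $\R^d$-versions established in Sections~\ref{sec:alt_sums_rd} and~\ref{subsec:weighted_sums_rd}. For $r=0$ there is nothing to prove: $\FrSum[0](S) = \sum_{k\ge d+1}(-1)^{k+d+1}\Xk(S)$ is exactly $M_0(S)$, whose value is given by Theorem~4.1 of~\cite{prs_oecp_06}. For $r=1$ one checks that $\FrSum[1](S) = M_1(S) - \ASum[1](S)$, because the $\ell=1$ coefficient $(-1)^{k+d}$ equals $-(-1)^{k+d+1}$; by Theorem~4.2 of~\cite{prs_oecp_06} and Theorem~\ref{thm:one_point_rd}, and using $i = n-h$, this difference equals $\sum_{k=0}^d(-1)^{d-k}k\binom{n}{k}$. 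So all the work is in the case $r=2$.

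For $r=2$, I would first rewrite
\[ F_2(S) = \sum_{k\ge d+1}\left((-1)^{k+1+d}\frac{k(k-3)}{2}\,\Xkl[k,0](S) + (-1)^{k+d}(k-1)\,\Xkl[k,1](S) + (-1)^{k+1+d}\,\Xkl[k,2](S)\right), \]
using $\mrk[2]{k} = \frac{k(k-3)}{2}$, $\mrk[1]{k} = k$ and $\mrk[0]{k} = 1$, and then define $f(k,\ell) := (-1)^{k-\ell+1+d}\mrk[2-\ell]{k-\ell}$ for $0\le\ell\le 2$ and $f(k,\ell) := 0$ for $\ell > 2$, so that $F_2(S) = \sum_{k\ge d+1}\sum_{\ell\ge 0}f(k,\ell)\,\Xkl(S)$. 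This $f$ agrees with the function used in the proof of Theorem~\ref{thm:moment_sums} up to the global factor $(-1)^d$.

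The next step is to verify that $f$ satisfies the recurrence~(\ref{equ:fkl_rd}), i.e.\ $f(k+1,\ell-1) + f(k,\ell-1) = f(k,\ell)$ for every $\ell > 0$. Since $(-1)^d$ occurs on both sides it cancels, and the verification is verbatim the one in the proof of Theorem~\ref{thm:moment_sums}: it is trivial for $\ell > 3$; for $\ell = 3$ it reduces to $(-1)^{k-\ell+3+d}\mrk[0]{k-\ell+2} + (-1)^{k-\ell+2+d}\mrk[0]{k-\ell+1} = 0$; and for $\ell \in\{1,2\}$ it reduces to the identity $\mrk[3-\ell]{k-\ell+2} - \mrk[3-\ell]{k-\ell+1} = \mrk[2-\ell]{k-\ell}$, checked by a short case distinction on $\ell$ together with the smallest relevant values of $k$. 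Then Theorem~\ref{thm:fkl_rd} applies and $F_2(S)$ depends only on $n$ and $d$. Finally I would evaluate $F_2(S)$ on a set $S$ of $n\ge d+1$ points in convex position, where every $k$-subset with $k\ge d+1$ spans an empty convex $k$-vertex polytope, so $\Xkl[k,0](S) = \binom{n}{k}$ and $\Xkl(S) = 0$ for $\ell\ge 1$, giving $F_2(S) = \sum_{k=d+1}^n(-1)^{k+1+d}\frac{k(k-3)}{2}\binom{n}{k}$. Since $\sum_{k=0}^n(-1)^k k\binom{n}{k} = 0$ and $\sum_{k=0}^n(-1)^k k^2\binom{n}{k} = 0$ for $n\ge 3$ (see~\cite{spivey}), the sum $\sum_{k=0}^n(-1)^k\frac{k(k-3)}{2}\binom{n}{k}$ vanishes, and therefore
\[ F_2(S) = -(-1)^d\sum_{k=d+1}^n(-1)^k\frac{k(k-3)}{2}\binom{n}{k} = (-1)^d\sum_{k=0}^d(-1)^k\frac{k(k-3)}{2}\binom{n}{k} = \sum_{k=0}^d(-1)^{d+k}\frac{k(k-3)}{2}\binom{n}{k}, \]
which is the asserted value.

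I do not expect a genuine obstacle: the argument is the planar proof of Theorem~\ref{thm:moment_sums} with a uniform extra sign $(-1)^d$ and with the tail $\sum_{k=0}^2$ replaced by $\sum_{k=0}^d$. The only delicate points are bookkeeping: carrying the $(-1)^d$ consistently, and handling the small-$k$ subcases in the recurrence check while respecting the convention $\mrk[r]{k} = 0$ for $k < 2r$ and checking it matches the value of $\frac{k}{r}\binom{k-r-1}{r-1}$ at those indices — and for $d\ge 3$ we have $k\ge 4$, so fewer such subcases arise than in the plane.
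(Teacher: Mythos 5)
Your proposal is correct and is exactly the paper's argument: the paper's proof of Theorem~\ref{thm:moment_sums_rd} consists of the single remark that it is identical to the proof of Theorem~\ref{thm:moment_sums} with the summation starting at $k=d+1$, and you carry out precisely that transfer, with the sign $(-1)^d$ and the tail $\sum_{k=0}^{d}$ handled correctly. No discrepancies to report.
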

\noindent The proof of this theorem is identical to the proof of Theorem~\ref{thm:moment_sums}, except that the summation starts with $k=d+1$ instead of $k=3$. 

%

%

%
%

%
%
%
%
%
%

%
%
%
%
%
%
%
%


\end{document}